\tikzstyle arrowstyle=[scale=2]
\tikzstyle mdirected=[postaction={decorate,decoration={markings,
    mark=at position.5 with {\arrow[arrowstyle]{stealth}}}}]
\tikzstyle edirected=[postaction={decorate,decoration={markings,
    mark=at position 1.0 with {\arrow[arrowstyle]{stealth}}}}]
\tikzset{bullet/.style={
shape = circle,fill = black, inner sep = 0pt, outer sep = 0pt, minimum size = 0.35em, line width = 0pt, draw=black!100}}
\newcolumntype{C}[1]{>{\centering\arraybackslash}m{#1}}
\newtheorem{thm}{Theorem}[section]
\newtheorem{lem}[thm]{Lemma}
\newtheorem{claim}[thm]{Claim}
\newtheorem{cor}[thm]{Corollary}
\newtheorem{prop}[thm]{Proposition}
\theoremstyle{definition}
\newtheorem{defn}{Definition}[section]
\numberwithin{equation}{section}
\newtheorem{theorem}{Theorem}[section]
\newtheorem{lemma}[theorem]{Lemma}
\theoremstyle{definition}
\theoremstyle{definition}
\newtheorem{remark}{Remark}[section]
\theoremstyle{definition}
\numberwithin{equation}{section}
\newcommand{\RN}[1]{%
  \textup{\uppercase\expandafter{\romannumeral#1}}%
}
\let\@wraptoccontribs\wraptoccontribs
\begin{document}

%opening

\title[Symplectic fillings of Seifert 3-manifolds]{A topological characterization of symplectic fillings of Seifert 3-manifolds}

%\title[Symplectic fillings of Seifert 3-manifolds]{On the topology of symplectic fillings of Seifert 3-manifolds}

\author{Hakho Choi}
\address{Hakho Choi: Center for Quantum Structures in Modules and Spaces, Seoul National University, Seoul 08826, Republic of Korea }
\email{hako85@snu.ac.kr}

\author{Jongil Park}
\address{Jongil Park: Department of Mathematical Sciences, Seoul National University, Seoul 08826,
 Republic of Korea }
\email{jipark@snu.ac.kr}

\address{JaeKwan Jeon: Department of Mathematics, Chungnam National University, Daejeon 34134, Republic of Korea }
\email{jk-jeon@cnu.ac.kr}

%\contrib[With an appendix by]{Jaekwan Jeon}

\thanks{}
\subjclass[2010]{}%
\keywords{Rational blowdown surgery, Seifert $3$-manifold, symplectic filling}
\date{July 26, 2022; revised at January 23, 2023}

\begin{abstract}
In this paper, we investigate a surgical interpretation for minimal symplectic fillings of a given Seifert 3-manifold equipped with a canonical contact structure. Consequently, we determine a necessary and sufficient condition for a minimal symplectic filling of a Seifert 3-manifold satisfying certain conditions to be obtained by a sequence of rational blowdown surgery from the minimal resolution of the corresponding weighted homogeneous surface singularity. Furthermore, as an application, we prove that every minimal symplectic filling of a large family of Seifert 3-manifolds with a canonical contact structure is in fact realized as a Milnor fiber of the corresponding weighted homogeneous surface singularity in the Appendix. 
\end{abstract}

\maketitle
\hypersetup{linkcolor=black}

\section{Introduction}
 A fundamental problem in symplectic 4-manifold topology is the classification of symplectic fillings of certain 3-manifolds equipped with a natural contact structure. Researchers have long studied the symplectic fillings of the link of a normal complex surface singularity. Note that Seifert 3-manifolds can be viewed as a link of weighted homogeneous surface singularities, and the link of such a normal surface singularity carries a canonical contact structure, also known as the Milnor fillable contact structure.
 For example, P.~Lisca~\cite{Lis}, M.~Bhupal and K.~Ono~\cite{BOn}, and the second author of this study et al.~\cite{PPSU} completely classified all minimal symplectic fillings of lens spaces and certain small Seifert 3-manifolds coming from the link of quotient surface singularities.

 Topologists working on 4-manifold topology are also interested in finding a surgical interpretation for the symplectic fillings of a given 3-manifold. More specifically, topologists investigate whether a surgical description of these fillings exists. 
 Indeed, a \emph{rational blowdown} surgery, introduced by R.~Fintushel and R.~Stern~\cite{FS} and generalized by the second author~\cite{Par} and A.~Stipsicz, Z.~Szab{\'o} and J.~Wahl~\cite{SSW}, is a powerful tool used in these investigations.  
For example, for the link of quotient surface singularities equipped with a canonical contact structure, it has been proven~\cite{BOz}, \cite{CP1} that every minimal symplectic filling is obtained by a sequence of rational blowdowns from the minimal resolution of the singularity. However, L.~Starkston~\cite{Sta2} showed that the symplectic fillings of some Seifert 3-manifolds cannot be obtained by a sequence of rational blowdowns from the minimal resolution of the corresponding singularity. 
Hence, knowing which Seifert 3-manifolds have a rational blowdown surgery interpretation for their minimal symplectic fillings is an intriguing question.

 In this paper, we first investigate a relation between rational blowdown surgery and the minimal symplectic fillings of a given Seifert 3-manifold with a canonical contact structure, so that we determine a necessary and sufficient condition for a minimal symplectic filling of a given Seifert 3-manifold satisfying certain conditions to be obtained by a sequence of rational blowdowns from the minimal resolution of the corresponding weighted homogeneous surface singularity.  
 In general, a Seifert 3-manifold can be considered as an $S^1$-fibration over a Riemann surface and it may have any number of singular fibers. In this article, we only consider a Seifert $3$-manifold $Y$ as an $S^1$-fibration over the $2$-sphere such that it can be described by $Y(-b; (\alpha_1, \beta_1), (\alpha_2, \beta_2),\ldots (\alpha_n, \beta_n))$, whose Dehn surgery diagram is given in Figure~\ref{Seifert} and given as a boundary of a plumbing 4-manifold of disk bundles over a $2$-sphere according to the graph $\Gamma$ in Figure~\ref{Seifert}.
The integers $b_{ij}\geq 2$ are uniquely determined by the following continued fraction:
 $$\frac{\alpha_i}{\beta_i}=[ b_{i1}, b_{i2}, \dots, b_{ir_i} ]=b_{i1}-\displaystyle {\frac{1}{b_{i2}-\displaystyle\frac{1}{\cdots-\displaystyle\frac{1}{b_{ir_i}}}}}$$

\begin{figure}[h]
\begin{tikzpicture}[scale=0.5]
\begin{scope}
\begin{knot}[
%	draft mode=crossings,
	clip width=5,
	clip radius = 2pt,
	end tolerance = 1pt,
]
\strand (0,0) ellipse (3 and 1.5);
\strand (-1.5,-1.75) ellipse (0.25 and 0.8);
\strand (-.2,-1.75) ellipse (0.25 and 0.8);
\draw (.8,-2) node[below] {$\cdots$};
\strand (1.75,-1.75) ellipse (0.25 and 0.8);
\draw (-3,1.5) node {$-b$};
\draw (-1.5,-2.55) node[below] {$-\frac{\alpha_1}{\beta_1}$};
\draw (-.2,-2.55) node[below] {$-\frac{\alpha_2}{\beta_2}$};
\draw (1.75,-2.55) node[below] {$-\frac{\alpha_n}{\beta_n}$};

\flipcrossings{1,4,5}
\end{knot}
\end{scope}
\begin{scope}[shift={(8,0)}]
\node[bullet] at (0,1.5){};
\node[bullet] at (0,0){};
\node[bullet] at (-2,0){};
\node[bullet] at (3,0){};
\node[bullet] at (0,-1){};
\node[bullet] at (-2,-1){};
\node[bullet] at (3,-1){};
\node[bullet] at (0,-3){};
\node[bullet] at (-2,-3){};
\node[bullet] at (3,-3){};
\draw (0,1.5)--(0,-1.5);
\draw (0,1.5)--(3,0)--(3,-1.5);
\draw (0,1.5)--(-2,0)--(-2,-1.5);
\draw[dotted,thick](1.25,0)--(1.75,0);
\draw[dotted,thick](1.25,-1)--(1.75,-1);
\draw[dotted,thick](1.25,-3)--(1.75,-3);

\draw[dotted](0,-1.5)--(0,-2.5);
\draw[dotted](3,-1.5)--(3,-2.5);
\draw[dotted](-2,-1.5)--(-2,-2.5);
\draw(0,-2.5)--(0,-3);
\draw(3,-2.5)--(3,-3);
\draw(-2,-2.5)--(-2,-3);
\draw (0,1.5) node[above] {$-b$};
\draw (0,0) node[left] {$-b_{21}$};
\draw (0,-1) node[left] {$-b_{22}$};
\draw (0,-3) node[left] {$-b_{2r_2}$};
\draw (-2,0) node[left] {$-b_{11}$};
\draw (-2,-1) node[left] {$-b_{12}$};
\draw (-2,-3) node[left] {$-b_{1r_1}$};
\draw (3,0) node[right] {$-b_{n1}$};
\draw (3,-1) node[right] {$-b_{n2}$};
\draw (3,-3) node[right] {$-b_{nr_n}$};
\end{scope}

\end{tikzpicture}
\caption{\mbox{Surgery diagram of $Y$ and its associated plumbing graph $\Gamma$}}
\label{Seifert}
\end{figure}
%In order to state main results, we introduce some basic concepts regarding minimal symplectic fillings of a Seifert $3$-manifold $Y$.
 
%First, in order to obtain a necessary condition, we start with a minimal symplectic filling $W$ of a given Seifert $3$-manifold $Y$

%In this paper, we consider a minimal symplectic filling $W$ of $Y$ as the complement of a certain concave cap $K$ embedded in a rational symplectic $4$-manifold $M$. 
%It turns out that the image of $K$ under the blowing-downs from $M$ to $\mathbb{CP}^2$ is a \emph{symplectic line arrangement} $S$ which is a union of a complex line $\mathbb{CP}^1\subset \mathbb{CP}^2$ with symplectic $2$-spheres homologous to $\mathbb{CP}^1 \subset \mathbb{CP}^2$. 

%Consider a minimal symplectic filling $W$ of a Seifert $3$-manifold $(Y,\xi_{\text{can}})$.

% We start with a minimal symplectic filling $W$ of the link $L$ of a weighted homogeneous surface singularity $(X,0)$ with the Milnor fillable contact structure $\xi_{\text{st}}$ whose resolution graph is given by $\Gamma$ in Figure~\ref{X}. Note that $(L,\xi_{\text{st}})$ is completely determined by $\Gamma $, whereas the analytic type of $(X,0)$ is not in general.

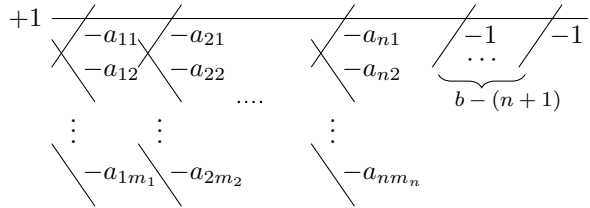
\begin{figure}[h]
\begin{tikzpicture}[scale=0.5]

\begin{scope}[xscale=2.3,yscale=1.85, shift={(6.6,1)}]

\draw(0,0)--(6.2,0);
\draw (0.5,0.2)--(0,-0.7);
\draw (1.5,0.2)--(1,-0.7);
\draw (3.5,0.2)--(3,-0.7);
\draw (4.9,0.2)--(4.4,-0.7);
\draw (5.9,0.2)--(5.4,-0.7);
\node at (5.,-0.6) {$\cdots$};
%\node at (2.25,-1.3) {$\cdots$};
\draw[dotted,thick](2.15,-1.2)--(2.45,-1.2);
	\draw [decorate,decoration={brace,amplitude=5pt,mirror},xshift=2pt,yshift=-3pt]
	(4.4,-0.7) -- (5.4,-0.7) node [black,midway,xshift=10pt,yshift=-10pt] 
	{\footnotesize $b-(n+1)$};

\draw (0,-0.3)--(0.5,-1.2);
\draw (1,-0.3)--(1.5,-1.2);
\draw (3,-0.3)--(3.5,-1.2);

\node at (0.25, -1.5) {$\vdots$};
\node at (1.25, -1.5) {$\vdots$};
\node at (3.25, -1.5) {$\vdots$};

\draw (0,-1.8)--(0.5,-2.7);
\draw (1,-1.8)--(1.5,-2.7);
\draw (3,-1.8)--(3.5,-2.7);

\node[left] at (0,0) {$+1$};
\node[right] at (0.25, -0.25) {$-a_{11}$};
\node[right] at (1.25, -0.25) {$-a_{21}$};
\node[right] at (3.25, -0.25) {$-a_{n1}$};
\node[right] at (4.65, -0.25) {$-1$};
\node[right] at (5.65, -0.25) {$-1$};

\node[right] at (0.25, -0.75) {$-a_{12}$};
\node[right] at (1.25, -0.75) {$-a_{22}$};
\node[right] at (3.25, -0.75) {$-a_{n2}$};

\node[right] at (0.25, -2.25) {$-a_{1m_1}$};
\node[right] at (1.25, -2.25) {$-a_{2m_2}$};
\node[right] at (3.25, -2.25) {$-a_{nm_n}$};

\end{scope}

\end{tikzpicture}
\caption{Concave cap $K$ }
\label{X}
\end{figure}

We introduce the main results by starting with a minimal symplectic filling $W$ of a Seifert $3$-manifold $Y$ with a canonical contact structure.
While $b\geq(n+1)$, we obtain a closed rational symplectic $4$-manifold $M=W\cup K$ by gluing a concave cap $K$  to $W$ along $Y$ (refer to Figure~\ref{X}). Then, the image of $K$ under blowing-downs from $M$ to $\mathbb{CP}^2$ is called a \emph{symplectic line arrangement} $S\subset \mathbb{CP}^2$, which is a union of the complex line $\mathbb{CP}^1$ with a finite number of symplectic lines, that is, symplectic $2$-spheres, each of which is homologous to $\mathbb{CP}^1 \subset \mathbb{CP}^2$. 
We call an intersection point $p$ of $S$ a \emph{multi-intersection point} if at least three symplectic lines pass through $p$. We denote the number of multi-intersection points in a symplectic line arrangement $S$ by $N_S$. Note that we blow up all the intersection points on the symplectic lines in $S$ to obtain an embedding $K$ in $M$, because each symplectic line becomes an arm in $K$. Therefore, all intersection points of symplectic lines in $S$ correspond to an exceptional $2$-sphere whose homology class appears at the first component of the corresponding arms in $K$, implying that the homological embedding of $K$ in $M$ determines the intersection data of $S$. 

Now, we provide a necessary condition for $W$ to be obtained by a sequence of rational blowdowns. Assume that a minimal symplectic filling $W$ of $Y$ is obtained from another symplectic filling $W'$ by rationally blowing down a negative definite star-shaped plumbing graph $G$ which is symplectically embedded in $W'$.
If $G$ is `nicely' embedded in $W'$, we can track the homological data of $K$ after surgery. Furthermore, we can describe a symplectic line arrangement $S$ corresponding to $W$ in terms of a symplectic line arrangement $S'$ corresponding to $W'$. In particular, we claim that the difference between the numbers $N_{S}$ and $N_{S'}$ of multi-intersection points is at most one, which is a key ingredient for getting the following main result.

%Since a symplectic line arrangement corresponding to the minimal resolution has a unique multi-intersection point, we have the following main theorem.

\begin{thm}
\label{thm1}
Suppose a Seifert $3$-manifold $Y(-b; (\alpha_1, \beta_1), (\alpha_2, \beta_2),\ldots, (\alpha_n, \beta_n))$ satisfies $b\geq n+1$. If a minimal symplectic filling $W$ of $Y$ with a canonical contact structure is obtained from the minimal resolution of the corresponding weighted homogeneous surface singularity by a sequence of rational blowdowns, then the number $N_S$ of multi-intersection points in a symplectic line arrangement $S$ corresponding to $W$ is at most one.
\end{thm}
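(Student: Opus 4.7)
The plan is to proceed by induction on the number $k$ of rational blowdowns needed to obtain $W$ from the minimal resolution $W_0$ of the corresponding weighted homogeneous surface singularity.

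For the base case $k=0$, I would analyze the concave cap attached to $W_0$ directly. Since $W_0$ is the plumbing on the graph $\Gamma$ itself, the closed manifold $M_0=W_0\cup K$ admits an explicit sequence of blow-downs to $\mathbb{CP}^2$ coming from the $(-1)$-spheres in $K$, and the image of $K$ under this blow-down is a symplectic line arrangement $S_0$ whose lines meet in generic pairwise position. Hence $N_{S_0}=0$, which anchors the induction.

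For the inductive step, suppose $W$ is obtained from a filling $W'$ (itself reached from $W_0$ by $k-1$ rational blowdowns) by rationally blowing down a negative definite star-shaped plumbing graph $G$ that is `nicely' symplectically embedded in $W'$. By induction $N_{S'}\leq 1$, and by the key claim stated just before the theorem one has $|N_S-N_{S'}|\leq 1$. This already gives $N_S\leq 2$, so the crucial refinement is to rule out $N_S=2$. I would do this by tracking how the concave cap $K'$ transforms to $K$ under the surgery: the change in the symplectic line arrangement is localized near the intersection points of $S'$ that carry the arms of $G$, and a new multi-intersection point in $S$ can arise only when three arms of the transformed cap pass through a single blown-up point. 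Using the combinatorial constraints imposed by the nice embedding of $G$ together with the fact that every intersection point of $S$ must be represented by an exceptional sphere in one of the arms of $K$, I would show that when $N_{S'}=1$ no second multi-intersection point can be created, forcing $N_S\leq 1$.

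The main obstacle I expect is this last refinement. The bound $|N_S-N_{S'}|\leq 1$ is symmetric, so by itself it permits accumulation under repeated blowdowns; upgrading it to a genuine cap on $N_S$ will require a careful bookkeeping of the homology classes of the arms of $K$ before and after surgery, together with a monotonicity argument exploiting that a configuration producing a second multi-intersection point would force the arms of the nicely embedded $G$ into positions incompatible with its star-shaped structure. Pinning down the exact meaning of `nicely embedded' so that this argument propagates through the induction, and verifying that it is automatically inherited by the fillings appearing in any rational blowdown sequence starting from $W_0$, looks to be the technical heart of the proof.
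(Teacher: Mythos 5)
Your overall skeleton (induction on the number of blowdowns, controlling the change of $N_S$ under a single surgery) matches the paper's, but there are two real problems. First, your base case is wrong: the minimal resolution does \emph{not} correspond to a line arrangement in general position with $N_{S_0}=0$. It corresponds to the arrangement in which all symplectic lines pass through one common point (the left-hand arrangement of Figure~\ref{possibleline}), because the central $(-b)$-curve of the resolution graph $\Gamma$ arises as the exceptional curve over that common point; if the lines were pairwise generic there would be no single curve meeting all $n$ arms and the star-shaped plumbing could not appear as the complement of $K$. So $N_{S_0}=1$ (once at least three lines are present), which still anchors the induction, but the picture you describe is not the one that occurs.

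Second, and more seriously, the inductive step is exactly the content of the theorem and you leave it as a promissory note. The paper does not merely prove $|N_S-N_{S'}|\le 1$ and then rule out an increase case-by-case; it proves the monotone statement $N_S\in\{N_{S'},\,N_{S'}-1\}$ (Proposition~\ref{mainthm1}), and the mechanism is specific: (i) by Propositions~\ref{prop1} and~\ref{prop2}, every arm of $G$ is consumed by blowdowns through a chain ending at a $(-1)$-curve, so $G$ collapses to a single exceptional curve and hence is obtained by blowing up a \emph{single} intersection point $p'$ of $S'$; (ii) this forces the central degree of $G$ to exceed its number of arms, so by the Bhupal--Stipsicz classification $G$ is linear or $\Gamma_{p,q,r}$; (iii) the induced concave cap $K_G$ then has arms beginning with $(-1)$-strands, which pins down the symplectic line arrangement of the rational homology ball filling of $\partial G$ to one of exactly two possibilities, each with at most one multi-intersection point, so the local replacement of $T'$ by $T$ cannot create a second one. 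Your proposed ``combinatorial constraints from the nice embedding'' and ``careful bookkeeping of homology classes'' do not identify any of these ingredients, and without the single-point statement (i) and the classification inputs (ii)--(iii) there is no visible route to excluding $N_S=N_{S'}+1$. As written, the argument has a gap precisely where the theorem's difficulty lies.
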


Furthermore, if we restrict to the case $b\geq n+2$, the condition $N_S\leq 1$ in Theorem ~\ref{thm1} is also a sufficient condition for a minimal symplectic filling to be obtained via rational blowdown surgeries.

\begin{thm}
\label{thm2}
For a Seifert $3$-manifold $Y(-b; (\alpha_1, \beta_1), (\alpha_2, \beta_2),\ldots, (\alpha_n, \beta_n))$ with $b\geq n+2$, any minimal symplectic filling $W$ of $Y$ with $N_S\leq 1$ is obtained by a sequence of rational blowdowns from the minimal resolution of the corresponding weighted homogeneous surface singularity.
\end{thm}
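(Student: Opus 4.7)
The strategy is to induct on the number of symplectic lines in the arrangement $S$ (equivalently, on $b_2(W)$). The base case is when $S$ is the \emph{concurrent arrangement}---$L_0$ together with exactly $n$ additional symplectic lines all passing through a single common point---which corresponds to the minimal resolution $W_{\min}$ of the weighted homogeneous singularity. The inductive step asserts that any $W$ with $N_S\leq 1$ that is not $W_{\min}$ admits a ``parent'' minimal symplectic filling $W'$ with $N_{S'}\leq 1$ such that $W$ is obtained from $W'$ by a single rational blowdown along a negative-definite star-shaped plumbing $G\subset W'$.

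My first task is to give a structural classification of symplectic line arrangements with $N_S\leq 1$. Such an $S$ decomposes into: (i) a ``pencil'' of lines through a distinguished point $p$ (the unique multi-intersection point if $N_S=1$), and (ii) the remaining lines, which intersect one another and $L_0$ only nodally. The hypothesis $b\geq n+2$ guarantees that the cap $K$ has at least one spare $-1$ sphere attached to the central $+1$ sphere beyond those accounting for the $n$ arms, and this spare $-1$ sphere provides the topological flexibility needed in the reduction step below.

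The core of the argument is to identify, for a non-minimal arrangement $S$, a local simplification $S \rightsquigarrow S'$ and to realize it as the reverse of a rational blowdown. Concretely, I would locate a symplectic line $L_j$ participating in an isolated nodal intersection and show that ``sliding'' $L_j$ into a concurrent configuration (through the existing point $p$ when $N_S=1$, or creating a common intersection when $N_S=0$) yields an arrangement $S'$ corresponding to some minimal symplectic filling $W'$ with $N_{S'}\leq 1$. I would then describe, from the combinatorial data of the arms of $K$ meeting $L_j$ and of the first components of those arms which record the intersection data, an explicit symplectically embedded negative-definite star-shaped plumbing $G\subset W'$ whose rational blowdown produces $W$. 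This requires tracking how the framings of the affected first components change under the modification and verifying that $G$ satisfies the hypotheses of the generalized rational blowdown of \cite{Par} and \cite{SSW}. Running the induction then builds $W$ from $W_{\min}$ by a sequence of rational blowdowns.

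The principal difficulty lies in the inductive step: pinpointing the correct local simplification of $S$ and matching it with a genuine symplectically embedded plumbing $G\subset W'$, while simultaneously preserving $N_{S'}\leq 1$. The hypothesis $b\geq n+2$ is essential here---the extra $-1$ sphere absorbs what would otherwise become a second multi-intersection point after sliding $L_j$, allowing the induction to close without leaving the class $N_{S'}\leq 1$. Along the way, the ``$|N_S-N_{S'}|\leq 1$'' principle from the lead-up to Theorem~\ref{thm1} is crucial, as it both dictates which arrangements can occur as $S'$ and ensures that the local modification of $S$ corresponds coherently to the global surgery relating $W'$ and $W$.
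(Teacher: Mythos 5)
Your induction is set up on the wrong invariant, and this creates a genuine gap. For a fixed $Y(-b;\dots)$ with $b\geq n+2$, the number of symplectic lines in $S$ is the same for \emph{every} minimal filling $W$: each of the $n$ arms of $K$ and each of the $b-(n+1)$ extra $(-1)$-spheres contributes exactly one line, so $S$ always has $b-1$ lines besides $\mathbb{CP}^1$. Moreover, your parenthetical ``equivalently, on $b_2(W)$'' is false --- $b_2(W)$ does vary among fillings, but the line count does not, and in any case the minimal resolution has the \emph{largest} $b_2$ (rational blowdown strictly decreases it), so it cannot serve as the base case of an induction that grows $b_2$. More fundamentally, you conflate the arrangement $S$ with the filling $W$. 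Under the hypothesis $b\geq n+2$ the paper shows there are only \emph{two} possible arrangements with $N_S\leq 1$ (all lines concurrent, or all but one concurrent), yet each of these underlies infinitely many distinct fillings, distinguished not by $S$ but by the homological data of $K$, i.e., by which intersection points and which exceptional strands are blown up en route to the curve configuration. Your ``sliding $L_j$'' move modifies $S$, which is the wrong level of data: almost all of the work lies in comparing curve configurations over a \emph{fixed} arrangement, where there is nothing to slide.

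The paper's actual argument classifies the curve configurations arising from the two admissible arrangements into three types (A, B, C) according to which exceptional strands $e_i$ are blown up, and then invokes the standard--blowing--up comparison lemmas of \cite{CP2}: if a curve configuration $C$ differs from the standard blowing-up $\widetilde{C'}$ of an intermediate configuration $C'$ only in controlled components, the corresponding fillings are related by a sequence of rational blowdowns. Type A reduces directly to the minimal resolution; type B reduces to a lens-space subcap $K'$ and then cites the known fact that all minimal fillings of lens spaces are rational blowdowns of the minimal resolution; type C is handled by induction on the number of blown-up $e_i$'s, reducing to type B. Your proposal contains no analogue of these comparison lemmas, which are the actual engine producing the embedded plumbings $G$; the ``$|N_S-N_{S'}|\leq 1$'' principle you lean on is the content of Theorem~\ref{thm1} (the necessity direction) and does not by itself produce a parent filling or an embedded $G$. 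To repair the argument you would need to reorganize the induction around the blow-up data of the curve configuration rather than the isotopy/combinatorics of $S$.
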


 A strategy for proving Theorem~\ref{thm2} is similar to that for proving Theorem 1.1 in ~\cite{CP2}. We divide all possible minimal symplectic fillings into certain types and then we show that such a sequence of rational blowdowns from the minimal resolution for each type exists by using lemmas proved in Section 4 ~\cite{CP2}.

Note that, if we further restrict to the case $b\geq n+3$, it is easy to check that every possible symplectic line arrangement satisfies the condition $N_S\leq 1$ (see Lemma ~\ref{linelem}). Hence we derive the following result from Theorem~\ref{thm2}.

\begin{cor}
For a Seifert $3$-manifold $Y(-b; (\alpha_1, \beta_1), (\alpha_2, \beta_2),\ldots, (\alpha_n, \beta_n))$ with $b\geq n+3$, every minimal symplectic filling of $Y$ is obtained by a sequence of rational blowdowns from the minimal resolution of the corresponding weighted homogeneous surface singularity.
\end{cor}

\begin{remark}
A family of minimal symplectic fillings of Seifert $3$-manifolds that cannot be obtained by a sequence of rational blowdowns were first provided by L.~Starkston in ~\cite{Sta2}.
Starkston's examples have $N_S=2$ with $b=n+2$. Hence, we easily recover Starkston's result using Theorem~\ref{thm1} above.
\end{remark}

\begin{remark}
In Theorem \ref{thm1} and Theorem \ref{thm2} above, the term `\emph{a sequence of rational blowdowns}' means that there is a sequence $W_i$ $(0\leq i \leq n_0)$ of minimal symplectic fillings of $(Y,\xi_{can})$ starting from the minimal resolution $W_{n_0}$ with $W_{0}\cong W$, and each $W_{i-1}$ is obtained from $W_i$ by rationally blowing down $G_i$, which is a negative definite star-shaped plumbing of $2$-spheres symplectically embedded in $W_i$. Hence, in general, we cannot find the plumbing graph $G_i$ in the dual resolution graph of the minimal resolution. However, if we allow blowing-ups from the resolution graph of the minimal resolution as in the quotient surface singularity cases ~\cite{CPS}, the plumbing graph $G_i \subset W_i$ can be found in most cases.     
\end{remark}

Finally, as an application of the main results above, we obtain a relation between minimal symplectic fillings of $Y(-b; (\alpha_1, \beta_1), (\alpha_2, \beta_2),\ldots, (\alpha_n, \beta_n))$ with $b\geq n+2$ and Milnor fibers of a weighted homogeneous surface singularity $(X,0)$ corresponding to $Y$ in the Appendix.

We call a proper flat map $\pi \colon \mathcal{X} \rightarrow \Delta$ with $\Delta=\{ t \in \mathbb{C} : |t| < \epsilon \}$ a \emph{smoothing} of $(X,0)$ if it satisfies $\pi^{-1}(0) = X$ and $\pi^{-1}(t)$ is smooth for all $t \neq 0$. The \emph{Milnor fiber} $M$ of a smoothing $\pi$ of $(X,0)$ is defined to be an intersection of a general fiber $\pi^{-1}(t)$ ($0 < t < \epsilon$) with a small closed ball centered at the origin. It is known that the Milnor fiber $M$ is a compact 4-manifold with the link $L$, which is diffeomorphic to $Y$, as its boundary and the diffeomorphism type of $M$ depends only on the smoothing $\pi$. 
Furthermore, $M$ has a natural symplectic structure, so that  it provides an example of minimal symplectic fillings of $(Y,\xi_{can})$. Hence, it is natural to ask the following question: ``For a given minimal symplectic $W$ of $Y$, is there a Milnor fiber $M$ of $(X,0)$ diffeomorphic to $W$?'' The answer is `no' in general because there is an infinite family of minimal symplectic fillings of a Seifert $3$-manifolds $Y(-b; (\alpha_1, \beta_1), (\alpha_2, \beta_2),\ldots, (\alpha_n, \beta_n))$ that cannot be diffeomorphic to any Milnor fibers~\cite{PS}. Note that all those examples satisfy $b=n+1$. Here we give a sufficient condition for an affirmative answer to the question.  More precisely, if a minimal symplectic filling $W$ of $Y$ satisfies $N_S\leq 1$, then there is a certain partial resolution $f: (Z,E)\rightarrow (X,0)$ (so-called $P$-resolution) such that the Milnor fiber of a smoothing of $Z$ is diffeomorphic to a given $W$. Hence, we get a deep relation between symplectic fillings and Milnor fibers for some Seifert 3-manifolds.

\begin{thm}
For a Seifert $3$-manifold $Y(-b; (\alpha_1, \beta_1), (\alpha_2, \beta_2),\ldots, (\alpha_n, \beta_n))$ with $b\geq n+2$, any minimal symplectic filling $W$ of $Y$ with $N_S\leq 1$ is realized as a Milnor fiber of some $P$-resolution of $(X,0)$.
\label{a}
\end{thm}

Furthermore, if $b\geq n+3$, every minimal symplectic filling satisfies automatically $N_S\leq 1$. Hence we also conclude that

\begin{cor}
For a Seifert $3$-manifold $Y(-b; (\alpha_1, \beta_1), (\alpha_2, \beta_2),\ldots, (\alpha_n, \beta_n))$ with $b\geq n+3$,
every minimal symplectic filling $W$ of $Y$ is realized as a Milnor fiber of some $P$-resolution of $(X,0)$.
\end{cor}

\subsection*{Acknowledgements}
The authors thank all members of the 4-manifold topology group at SNU for their helpful comments during the work. Jongil Park was supported by the National Research Foundation of Korea (NRF) grant funded by the Korean government (No.2020R1A5A1016126 and No.2021R1A2C1095776). 
He also holds a joint appointment at the Research Institute of Mathematics, SNU.

\section{Preliminaries}
\label{pre}

\subsection{Weighted homogeneous surface singularities and Seifert $3$-manifolds}
We briefly recall the relation between a Seifert $3$-manifold $Y$ and link $L$ of a weighted homogeneous surface singularity $(X,0)$.
We say that a normal surface singularity $(X,0)$ is a weighted homogeneous surface singularity if $(X,0)$ is given by zero loci of weighted homogeneous polynomials of the same type. Note that a polynomial $f(z_0,\dots, z_m)$ is called \emph{weighted homogeneous} if there exist nonzero integers $(q_0,\dots,q_m)$ and a positive integer $d$ that satisfy
$$f(t^{q_0}z_0,\dots t^{q_m}z_m)=t^df(z_0,\dots, z_m).$$
Then, there is a natural $\mathbb{C}^*$-action on $(X,0)$ given by 
$$t\cdot(z_0,\dots, z_m)=(t^{q_0}z_0,\dots t^{q_m}z_m),$$
which induces a fixed point-free $S^1\subset \mathbb{C}^*$ action on link $L:=X\cap \partial B$ of the singularity, where $B$ is a small ball centered at the origin. Hence, link $L$ is a Seifert fibered $3$-manifold over a genus $g$ Riemann surface. In this paper, we only consider a Seifert fibered $3$-manifold over the $2$-sphere, which is denoted by $Y(-b; (\alpha_1, \beta_1), (\alpha_2, \beta_2), \dots, (\alpha_n, \beta_n))$ for some integers $b, \alpha_i$ and $\beta_i$ with $0<\beta_i<\alpha_i$ and $(\alpha_i, \beta_i)=1$. 
Note that $n$ is the number of singular fibers, and there is an associated star-shaped plumbing graph $\Gamma$: the central vertex has genus $0$ and weight (equivalently, degree) $-b$, and each vertex in $n$ arms has genus $0$ and weight $-b_{ij}$ uniquely determined by the continued fraction 
$$\frac{\alpha_i}{\beta_i}=[ b_{i1}, b_{i2}, \dots, b_{ir_i} ]=b_{i1}-\displaystyle {\frac{1}{b_{i2}-\displaystyle\frac{1}{\cdots-\displaystyle\frac{1}{b_{ir_i}}}}}$$ 
with $b_{ij}\geq 2$. 
From P.~Orlik and P.~Wagreich~\cite{OW}, it is well known that the plumbing graph $\Gamma$ is a dual graph of the minimal resolution of $(X, 0)$. 
Moreover, if the intersection matrix of $\Gamma$ is negative definite, there is a weighted homogeneous surface singularity whose dual graph of the minimal resolution is $\Gamma$ ~\cite{Pin}. Furthermore, if a Seifert $3$-manifold $Y$ can be viewed as the link $L$ of a weighted homogeneous surface singularity, 
there exists a canonical contact structure $\xi_{\text{can}}$, called the \emph{Milnor fillable} contact structure, on $Y$ given by complex tangencies $TL\cap JTL$ that is known to be unique up to contactomorphism~\cite{CNPo}.

\subsection{Minimal symplectic fillings of Seifert $3$-manifolds}
In this subsection, we briefly review well-known facts regarding the minimal symplectic fillings of a Seifert $3$-manifold $Y$ with a canonical contact structure $\xi_{\text{can}}$.

%the link $L$ of a weighted homogeneous surface singularity with the Milnor fillable contact structure $\xi_{\text{can}}$. 

%As previously mentioned, the resolution graph of a weighted homogeneous surface singularity $X$ is given by the left-hand figure of Figure~\ref{X}.
 
As mentioned in the Introduction, there is a star-shaped plumbing graph $\Gamma$ associated to $Y$ (refer to Figure~\ref{Seifert}).
While $b\geq(n+1)$, we can always choose a concave cap $K$ of $(Y,\xi_{\text{can}})$ as shown in Figure~\ref{X}. For a minimal symplectic filling $W$ of $(Y,\xi_{\text{can}})$, we obtain a closed symplectic $4$-manifold $M=W\cup K$ by gluing $K$ along $Y$ to $W$. Then, the existence of $(+1)$ $2$-sphere in $K$ implies that $M$ is a rational symplectic $4$-manifold and, after a finite number of blowing-downs, $M$ becomes $\mathbb{CP}^2$ so that the $(+1)$ $2$-sphere in $K$ remains a complex line $\mathbb{CP}^1 \subset \mathbb{CP}^2$ (see Mcduff~\cite{McD} for details). The image of $K$ under the blowing-downs is called a \emph{symplectic line arrangement} $S$ consisting of complex line $\mathbb{CP}^1$ together with finite number of symplectic lines, in fact symplectic $2$-spheres, each of which is homologous to $\mathbb{CP}^1 \subset \mathbb{CP}^2$~\cite{Sta1}. 
Therefore, a minimal symplectic filling $W$ is completely determined by the homological embedding of $K$ in $M\cong \mathbb{CP}^2\sharp N\overline{\mathbb{CP}^2}$ and the isotopy type of $S$ in $\mathbb{CP}^2$. 
Note that the second homology group of $M$ is generated by $\{l,e_1,\dots, e_N\}$, where $l$ is a homology class of $\mathbb{CP}^1\subset \mathbb{CP}^2$ and $\{e_i\}$ are homology classes of exceptional $2$-spheres. Therefore, the homology class of each irreducible component of $K$ can be expressed in terms of this basis, which we call the \emph{homological data of $K$} for $W$. In Theorem~\ref{thm2}, we claim that, if the number $N_S$  of multi-intersection points of a symplectic line arrangement $S$ corresponding to $W$ is at most one,  the minimal symplectic filling $W$ of $(X,0)$ is obtained from the minimal resolution of $X$ by a sequence of rational blowdowns. Because the isotopy type of a symplectic line arrangement $S$ with a fixed intersection data is known to be unique if $N_S\leq 1$ (Proposition 4.2 in \cite{Sta2}), the minimal symplectic filling $W$ in Theorem~\ref{thm2} is determined uniquely by the homological data of $K$ for $W$.

Moreover, the combinatorial data of a symplectic line arrangement  $S$ can be described by a configuration of strands, as in Figure~\ref{line}. Each strand represents a symplectic $2$-sphere, and an intersection between strands represents a transversely geometric intersection between the $2$-spheres.
Hence, starting from a configuration of strands representing $S$, we can draw a configuration $C$ of strands containing $K$ using the homological data of $K$ for $W$. If there are no strands with degree less than or equal to $-2$ in $C$ except for the irreducible components of $K$, we call $C$ the \emph{curve configuration} for $W$, which is unique up to equivalence (Proposition 3.1 in~\cite{CP2}).

\smallskip

{\it{Terminology}}: We often use a terminology \emph{configuration of strands} when we deal with an intermediate configuration between a symplectic line arrangement and a curve configuration, or a configuration containing $K$ but there are strands with degree less than or equal to $-2$ other than irreducible components of $K$.

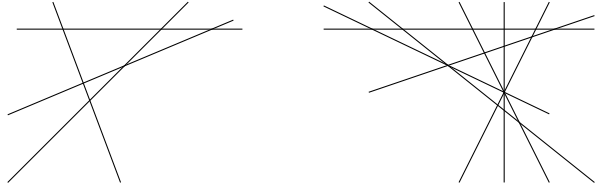
\begin{figure}[h]
\begin{tikzpicture}[scale=1.2]
\begin{scope}
\draw (0.1,0.2)--(2.6,0.2);
%\draw (1,0.5)--(1,-1.5);
%\draw (1.5,0.5)--(0.5,-1.5);
\draw (0.5,0.5)--(1.25,-1.5);
\draw (2,0.5)--(0,-1.5);
\draw (2.5,0.3)--(-0,-.75);
%\node[bullet] at (1,-.5){};
%\node at (2.15,0.4) {$\cdots$};
%\node at (3,0) {};
\end{scope}
\begin{scope}[shift={(4.5,0)}]
\draw (-1,0.2)--(2,0.2);
\draw (1,0.5)--(1,-1.5);
\draw (1.5,0.5)--(0.5,-1.5);
\draw (0.5,0.5)--(1.5,-1.5);
\draw (-.5,0.5)--(2,-1.5);
\draw (2,0.35)--(-.5,-.5);

%\draw (1,-.5)--(43/114,-115/570);
%\draw(43/114,0)--(43/114,-1);
\draw (1.5,-1938*1.5/4047-171/8094)--(-1,1938/4047-171/8094);
%\node at (1.25,0.4) {$\cdots$};

\end{scope}

\end{tikzpicture}
\caption{Examples of symplectic line arrangements}
\label{line}
\end{figure}

\subsection{Pseudo-holomorphic curves in rational symplectic $4$-manifolds}
Assume that a minimal symplectic filling $W$ of $Y$ is obtained from another minimal symplectic filling $W'$ by rationally blowing down a negative definite star-shaped plumbing graph $G$ that is symplectically embedded in $W'$.
To observe the effect of rationally blowing down $G\subset W'$ on a symplectic line arrangement, we first need to know how $G$ is symplectically embedded in $W'$. For this, we introduce several lemmas to analyze a symplectic embedding $G$ in $M=W'\cup K$. We assume that all irreducible components of $K$ and $G$ are $J$-holomorphic for a suitable tamed $J$. The following are some basic lemmas regarding $J$-holomorphic curves in $M$ obtained in ~\cite{BOn}.

%There are some basic lemmas in \cite{BOn}  regarding $J$-holomorphic curves in $M$. 
\begin{lem}[\cite{BOn}]
\label{lem1} 
Let $L, C_1,\dots,C_k$ be a collection of symplectic $2$-spheres in a closed symplectic $4$-manifold $M$ with $L\cdot L=1$, $C_i\cdot C_i \leq 0$. Suppose that $J$ is a tame almost complex structure for which $L, C_1,\dots,C_k$ are $J$-holomorphic. Then there exists at least one $J$-holomorphic $(-1)$ curve in $M \setminus L$.
\end{lem}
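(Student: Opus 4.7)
The plan is to combine McDuff's structure theorem for symplectic $4$-manifolds containing a symplectic sphere of positive self-intersection with positivity of intersections and Taubes' existence result for $J$-holomorphic representatives of exceptional classes. First, since $L$ is a symplectic sphere with $L \cdot L = 1$, McDuff's theorem gives a symplectomorphism $M \cong \mathbb{CP}^2 \# N\, \overline{\mathbb{CP}^2}$ for some $N \geq 0$, in which $[L]$ is identified with the hyperplane class $h$ of a standard basis $\{h, e_1, \dots, e_N\}$ with $h^2 = 1$ and $e_i^2 = -1$.

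Next I would argue $N \geq 1$, which uses the hypothesis that some $C_i$ with $C_i \cdot C_i \leq 0$ is present: a symplectic $2$-sphere in $\mathbb{CP}^2$ is necessarily homologous to $d\,h$ for some positive integer $d$ (by positivity of symplectic area against $h$), and therefore has self-intersection $d^2 \geq 1$, contradicting $C_i^2 \leq 0$. Hence $M$ is non-minimal, so the space of exceptional classes is non-empty.

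The key input is then Taubes' $\mathrm{SW}=\mathrm{Gr}$ theorem (or equivalently McDuff's curve-counting statements for rational surfaces): for the given tame $J$, every exceptional class $E \in H_2(M;\mathbb{Z})$ is represented by some (possibly reducible) $J$-holomorphic curve. I would apply this to a class $E$ chosen so that $E \cdot [L] = 0$, for instance $E = e_i$ for any $i$ (since $h \cdot e_i = 0$). Writing the representative as $\sum m_j D_j$ with $D_j$ irreducible $J$-holomorphic and $m_j > 0$, positivity of intersections forces $D_j \cdot L \geq 0$ whenever $D_j \neq L$; summing gives $\sum m_j (D_j \cdot L) = E \cdot [L] = 0$, so in fact every component $D_j \neq L$ is disjoint from $L$. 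Since the classes $[D_j]$ are supported away from $h$ and $\bigl(\sum m_j [D_j]\bigr)^2 = -1$, at least one $D_j$ must itself be a $(-1)$-sphere; this is the desired $J$-holomorphic exceptional curve in $M \setminus L$.

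The main obstacle is the last step, namely extracting from the (possibly reducible) representative of $E$ an irreducible embedded $J$-holomorphic $(-1)$-sphere that avoids $L$. I would handle this by an inductive argument on the components of the representative, using the adjunction inequality to bound their self-intersections and genera, combined with the fact that every component disjoint from $L$ has homology class lying in the span of the $e_i$'s, so its self-intersection is non-positive. A minimality choice (e.g.\ pick $E$ among the exceptional classes of smallest pairing with the symplectic form so that no proper sub-sum of $\sum m_j [D_j]$ is exceptional) ensures that the extracted component is itself an embedded symplectic $(-1)$-sphere, completing the proof.
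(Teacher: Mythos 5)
The paper does not actually prove this lemma; it is quoted verbatim from Bhupal--Ono \cite{BOn}, so there is no in-paper argument to compare against. That said, your proposal is a correct reconstruction of the standard proof and matches the ingredients used in \cite{BOn}: McDuff's theorem identifying $(M,[L])$ with $(\mathbb{CP}^2\sharp N\overline{\mathbb{CP}^2},h)$, the observation that a symplectic sphere of non-positive square forces $N\geq 1$, the existence of a (possibly reducible, with multiplicities) $J$-holomorphic representative of an exceptional class $e_i$ for \emph{every} tamed $J$, and positivity of intersections to push the representative off $L$. Two small refinements: first, for $b^+=1$ manifolds the existence of representatives is cleaner to attribute to Gromov compactness applied to generic $J'\to J$ (the class $e_i$ has a regular embedded representative for generic $J'$), or to Li--Liu, rather than to Taubes' $\mathrm{SW}=\mathrm{Gr}$ directly, which requires wall-crossing care when $b^+=1$. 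Second, your final ``minimality choice'' is unnecessary: since $\sum m_j\, c_1\cdot[D_j]=c_1\cdot e_i=1>0$, some component has $c_1\cdot[D_j]\geq 1$, hence $[D_j]^2\geq c_1\cdot[D_j]-2\geq -1$ by adjunction; on the other hand $[D_j]$ lies in the negative definite span of $e_1,\dots,e_N$ and is nonzero (positive area), so $[D_j]^2\leq -1$. Equality forces $[D_j]^2=-1$, $c_1\cdot[D_j]=1$, and equality in adjunction, i.e.\ $D_j$ is the desired embedded $J$-holomorphic $(-1)$-curve in $M\setminus L$. Your sum argument also already rules out $L$ itself appearing as a component, since that would contribute $m_L\cdot L\cdot L>0$ to a sum that must vanish.
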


\begin{lem}[\cite{BOn}]
\label{lem2}
Let $M$ be a closed symplectic $4$-manifold and let $L$ be a symplectically embedded $2$-sphere of self-intersection number $1$. Then, no symplectically embedded $2$-sphere of nonnegative self-intersection number is contained in $M\setminus L$. Pseudo-holomorphic $(-1)$ curves in $M\setminus L$ are mutually disjoint.
\end{lem}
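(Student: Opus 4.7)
The plan is to prove both parts by passing to the McDuff normal form. By McDuff's classification theorem, the presence of the symplectically embedded $(+1)$-sphere $L$ forces $M$ to be symplectomorphic to $\mathbb{CP}^2\#N\overline{\mathbb{CP}^2}$ with $[L]$ identified with the hyperplane class $h$, so that on a basis $\{h,e_1,\ldots,e_N\}$ of $H_2$ the intersection form is $\mathrm{diag}(1,-1,\ldots,-1)$ and each $e_i$ is represented by a symplectic $(-1)$-sphere with $\omega(e_i)>0$.

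For the first assertion, suppose $C\subset M\setminus L$ is a symplectically embedded $2$-sphere with $C^2\geq 0$. Choose a tame almost complex structure $J$ making both $L$ and $C$ holomorphic. Positivity of intersections applied to the disjoint pseudoholomorphic curves $L$ and $C$ gives $[C]\cdot[L]=0$, so in the McDuff basis $[C]=\sum a_i e_i$ has no $h$-component. Then $[C]^2=-\sum a_i^2\leq 0$, with equality only if $[C]=0$; combined with $[C]^2\geq 0$ this forces $[C]=0$, contradicting $\omega([C])>0$.

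For the second assertion, let $E_1,E_2\subset M\setminus L$ be two distinct pseudoholomorphic $(-1)$-curves for a common tame $J$ that also makes $L$ holomorphic. Each $E_j$ is disjoint from $L$, so $[E_j]\cdot[L]=0$ and $[E_j]$ lies in the span of the $e_i$'s. The equation $[E_j]^2=-1$ then forces exactly one coefficient to be $\pm 1$ and all others zero, and the sign is fixed by $\omega([E_j])>0$; hence $[E_j]=e_{i_j}$ for some index $i_j$. If $i_1=i_2$, then positivity of intersections applied to the distinct $J$-holomorphic curves $E_1,E_2$ would give $0\leq E_1\cdot E_2=e_{i_1}^2=-1$, a contradiction; hence $i_1\neq i_2$ and $[E_1]\cdot[E_2]=0$. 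Positivity of intersections then yields geometric intersection zero, so $E_1\cap E_2=\emptyset$.

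The main obstacle is the initial invocation of McDuff's classification, which converts an a priori structural question about arbitrary closed symplectic $4$-manifolds into a transparent calculation in the known intersection form of a blown-up $\mathbb{CP}^2$. Once this reduction is in hand, both parts follow from positivity of intersections and elementary linear algebra; in particular, the second part requires no symplectic smoothing of nodes or Gromov-compactness extraction of components, making the argument completely clean.
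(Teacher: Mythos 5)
Your proof is correct. The paper states this lemma without proof, quoting it from Bhupal--Ono \cite{BOn}, and your argument --- invoking McDuff's theorem to identify $(M,L)$ with $(\mathbb{CP}^2\sharp N\overline{\mathbb{CP}^2},\,\mathbb{CP}^1)$ and then computing in the diagonal basis $\{h,e_1,\dots,e_N\}$ together with positivity of intersections --- is exactly the standard argument behind the cited result, and it is the same reduction the paper itself relies on when it passes from $M=W\cup K$ to a symplectic line arrangement in $\mathbb{CP}^2$. One minor simplification: in the first assertion the vanishing $[C]\cdot[L]=0$ already follows from the disjointness of the two closed embedded surfaces, so the choice of a common tame $J$ is not needed at that step.
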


\begin{lem}[\cite{BOn}]
\label{lem3}
Let $M$ be a closed symplectic $4$-manifold and let $L$ be a symplectically embedded $2$-sphere of self-intersection number $1$. Then, any irreducible singular or higher-genus pseudo-holomorphic curve $C$ in $M$ satisfies $C\cdot L \geq 3$. In particular, neither an irreducible singular nor a higher-genus pseudo-holomorphic curve is contained in $M\setminus L$.
\end{lem}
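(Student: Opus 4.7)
The plan is to combine McDuff's structure theorem for symplectic $4$-manifolds containing a positive-square symplectic sphere with the adjunction formula for pseudo-holomorphic curves and positivity of intersections. Since $L\subset M$ is a symplectically embedded sphere with $L\cdot L = 1$, McDuff's theorem identifies $(M,\omega)$, up to symplectomorphism and rescaling, with a blowup $\mathbb{CP}^2\#N\overline{\mathbb{CP}^2}$ in which $[L]$ corresponds to the hyperplane class $H$. In this model, $H_2(M;\mathbb{Z})$ has a basis $H, E_1,\dots,E_N$ with $H^2=1$, $H\cdot E_i = 0$, $E_i\cdot E_j = -\delta_{ij}$, and $c_1(M) = 3H - \sum_i E_i$; moreover $[\omega]$ is Poincar\'e dual to $\alpha H - \sum_i \beta_i E_i$ with $\alpha, \beta_i > 0$ (since $[\omega]$ pairs positively with each $J$-holomorphic sphere $H$ and $E_i$).

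Write $[C] = aH + \sum_i b_i E_i$, so that $a = C\cdot L$. For an irreducible $J$-holomorphic curve $C$, the adjunction formula reads
$$2g(C) - 2 + 2\delta(C) \;=\; C^2 - c_1(M)\cdot C,$$
where $g(C)$ is the geometric genus and $\delta(C)\geq 0$ counts singularities with multiplicity, vanishing iff $C$ is smoothly embedded. Under the hypothesis that $C$ is singular ($\delta\geq 1$) or has higher genus ($g\geq 1$), the left-hand side is nonnegative, so substituting the expansions of $C^2$ and $c_1(M)\cdot C$ gives
$$a^2 - 3a \;\geq\; \sum_i b_i(b_i + 1).$$
By positivity of intersections, if $C\neq E_i$ then $C\cdot E_i = -b_i \geq 0$, so $b_i\leq 0$; and $C$ cannot equal any $E_i$ because each $E_i$ is a smoothly embedded rational sphere, contradicting the hypothesis on $C$. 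Hence each term $b_i(b_i+1)$ is nonnegative, yielding $a(a-3)\geq 0$.

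Positivity of intersections with $L$ (applicable since $L$ is also an embedded rational sphere, so $C\neq L$) forces $a\geq 0$, leaving $a=0$ or $a\geq 3$. To eliminate $a=0$, note that then $[C]=\sum_i b_i E_i$ with all $b_i\leq 0$, so
$$[\omega]\cdot[C] \;=\; \sum_i \beta_i b_i \;\leq\; 0,$$
contradicting the strict positivity of symplectic area for a nonconstant $J$-holomorphic curve. Therefore $C\cdot L = a\geq 3$, giving the first assertion; the ``in particular'' statement follows immediately, since any $C$ contained in $M\setminus L$ is disjoint from $L$ and hence has $C\cdot L = 0 < 3$. The main obstacle I anticipate is the clean invocation of McDuff's identification at the outset; if one prefers to avoid citing it, an alternative is to iteratively blow down a maximal disjoint collection of $J$-holomorphic $(-1)$-spheres in $M\setminus L$ supplied by Lemma~\ref{lem1}, reducing $M$ to $\mathbb{CP}^2$ with $L$ the complex line, and to track the class $[C]$ through this blowdown process.
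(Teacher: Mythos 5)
The paper does not actually prove this lemma: it is imported verbatim from Bhupal--Ono \cite{BOn}, so there is no in-paper argument to measure yours against. Your proof is the standard one and is correct in substance --- McDuff's identification of $(M,L)$ with $(\mathbb{CP}^2\sharp N\overline{\mathbb{CP}^2},\mathbb{CP}^1)$, the adjunction formula with the singularity/genus defect, and positivity of intersections reduce the claim to the classical fact that an irreducible plane curve of degree $d$ that is singular or of positive geometric genus must have $d\geq 3$. Two steps deserve more care. First, positivity of intersections (both for $a\geq 0$ and for the $E_i$) presupposes a common tame $J$ making $L$, $C$, and representatives of the $E_i$ holomorphic; for $L$ this is implicit in how the lemma is used, but the assertion that $b_i\leq 0$ tacitly assumes that each exceptional class $E_i$ has an irreducible $J$-holomorphic representative of which $C$ is not a component, which can fail for the non-generic $J$ adapted to $C$ (the class $E_i$ may only be represented by a reducible stable curve). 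Fortunately this input is unnecessary: $b_i(b_i+1)\geq 0$ holds for \emph{every} integer $b_i$, so the inequality $a^2-3a\geq\sum_i b_i(b_i+1)$ already yields $a(a-3)\geq 0$ with no appeal to the $E_i$; and in the residual case $a=0$ the same inequality forces $\sum_i b_i(b_i+1)\leq 0$, hence $b_i\in\{0,-1\}$ for all $i$, so that $[\omega]\cdot[C]\leq 0$ and your area contradiction still goes through. With that small repair (or with your suggested alternative of iteratively blowing down the $(-1)$-curves in $M\setminus L$ furnished by Lemma~\ref{lem1} and tracking $[C]$ and its singularities through the blowdowns, which is closer in spirit to how the present paper manipulates these curves), the proof is complete.
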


From Lemma~\ref{lem1}, we obtain a sequence of rational symplectic $4$-manifolds $M_j$ $(0\leq j \leq N)$ with $M_0\cong \mathbb{CP}^2$ and $M_N=M \cong  \mathbb{CP}^2 \sharp N\overline{\mathbb{CP}^2}$ such that $M_{j}$ is obtained by blowing down the $J_{j+1}$-holomorphic $(-1)$ curve $e_{j+1}$ from $M_{j+1}$ for a tamed $J_{j+1}$. 
Note that for a $J$-holomorphic $(-1)$ curve $e$ and an irreducible component $C$ of $G$ and $K$ in $M$, either $C$ is disjoint from $e$ or $C$ intersects transversally once with $e$ due to Lemma~\ref{lem3}. Hence, the image of $C$ under the blowing-downs in $M_j$ is a non-singular $J_j$-holomorphic curve. In particular, the self-intersection number of $C$ increases to $-1$. Therefore, $C$ eventually becomes the $J_j$-holomorphic curve $e_j$ under the blowing-downs unless $C$ is $C^0$ or $C^i_1$ for some $i$, which becomes an irreducible component of a symplectic line arrangement in $M_0\cong\mathbb{CP}^2$. Here $C^i_j$ denotes the $j^{\text{th}}$  irreducible component of the $i^{\text{th}}$ arm of $K$, and $C^0$ denotes the central $2$-sphere.

\begin{lem}
If there is a triple intersection between the images of the irreducible components of $K$ and $G$ during the blowing-downs, then they are the images of $C^{i_1}_1$, $C^{i_2}_1$ and $C^{i_3}_1$ for some $i_1$, $i_2$ and $i_3$ under the blowing-downs. 
\label{triple}
\end{lem}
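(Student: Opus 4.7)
The plan is to choose the largest index $j$ for which three images $A$, $B$, $C$ of distinct irreducible components of $K\cup G$ share a common point $p\in M_j$. By maximality no such triple exists in $M_{j+1}$, so $p$ is the collapse point of the $J_{j+1}$-holomorphic $(-1)$-curve $e_{j+1}$, and the proper transforms of $A$, $B$, $C$ in $M_{j+1}$ meet $e_{j+1}$ at three distinct points. I would first check that each of $A\cdot e_{j+1}$, $B\cdot e_{j+1}$, $C\cdot e_{j+1}$ is exactly $1$: if, say, $A\cdot e_{j+1}\geq 2$, then the image of $A$ in $M_j$ would be an irreducible singular $J_j$-holomorphic curve at $p$, and Lemma~\ref{lem3} would force $A\cdot L\geq 3$; this contradicts $X\cdot L\leq 1$ for every component $X\neq L$ of $K\cup G$, a bound preserved throughout the blow-down sequence because each $e_k$ is chosen in $M_k\setminus L$ by Lemma~\ref{lem1}, keeping $L^2=+1$ constant.

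Second, since $L\cdot e_{j+1}=0$ by the same lemma, none of $A$, $B$, $C$ can be $L$ itself.

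The main step is to upgrade these to $A\cdot L=B\cdot L=C\cdot L=1$. Consider the reducible $J_j$-holomorphic cycle $A+B+C$ in $M_j$: it has an ordinary triple point at $p$ of $\delta$-invariant $3$, so the divisor has arithmetic genus at least $1$. A small generic perturbation of $J_j$, combined with Gromov compactness, produces a connected pseudo-holomorphic representative of the class $[A]+[B]+[C]$ which is either singular at $p$ or of geometric genus $\geq 1$, paralleling the classical degeneration in $\mathbb{CP}^2$ of a smooth cubic into three concurrent lines. Lemma~\ref{lem3} applied to this representative yields
\[
([A]+[B]+[C])\cdot L \;\geq\; 3,
\]
and since each summand lies in $\{0,1\}$, equality must hold termwise.

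Finally I identify the components of $K\cup G$ with intersection $1$ with $L$: by the plumbing structure of the cap $K$ in Figure~\ref{X}, these are precisely the arm-tops $C^i_1$ (the deeper arm vertices $C^i_r$ with $r\geq 2$ meet only their plumbing neighbors in the arm, so are disjoint from $L$ in $M$; every component of $G\subset W'=M\setminus K$ is likewise disjoint from $L$). Consequently $A$, $B$, $C$ are the images of $C^{i_1}_1$, $C^{i_2}_1$, $C^{i_3}_1$ for some indices $i_1$, $i_2$, $i_3$. The principal technical obstacle is the smoothing/compactness step producing the singular or higher-genus pseudo-holomorphic representative of $[A]+[B]+[C]$ to which Lemma~\ref{lem3} can be applied; carrying out this deformation while tracking the reducible configuration $A\cup B\cup C$ and its position relative to $L$ is the delicate part of the argument.
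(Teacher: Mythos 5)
Your setup (the triple point in $M_j$ is created by collapsing $e_{j+1}$, which meets $A$, $B$, $C$ transversally at three distinct points and is disjoint from $L$, so none of the three curves is $L$) is fine, and your final identification of the components with $X\cdot L=1$ as exactly the arm-tops $C^i_1$ is also correct. The problem is the main step. You want to apply Lemma~\ref{lem3} to the class $[A]+[B]+[C]$, but Lemma~\ref{lem3} is a statement about an \emph{irreducible} singular or higher-genus pseudo-holomorphic curve, and you have only a reducible configuration of three embedded spheres. Gromov compactness runs in the wrong direction here: it degenerates irreducible curves into reducible ones, it does not smooth a reducible cycle into an irreducible representative. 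There is no reason the class $[A]+[B]+[C]$ is represented by an irreducible $J$-holomorphic curve for any tame $J$ -- for a typical triple of components of $K\cup G$ this class has very negative square and admits no such representative -- so the inequality $([A]+[B]+[C])\cdot L\geq 3$ is not established, and the step you yourself flag as ``the delicate part'' is in fact a gap that cannot be closed by the tools in play.

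The paper gets the same conclusion by a dynamic argument that only ever applies Lemma~\ref{lem3} to the image of a \emph{single} component. Recall from the discussion preceding the lemma that every component other than $C^0$ and the $C^i_1$ eventually becomes a $(-1)$-curve and is contracted. If one of the three curves through the triple point is such a component, contracting it leaves the other two meeting with local multiplicity at least two, i.e.\ tangentially. If those two are both first components $C^{i}_1$, they descend to two symplectic lines in $S'$ meeting non-transversally, contradicting the definition of a symplectic line arrangement. Otherwise at least one of them is also eventually contracted, and that contraction turns its tangent partner into an irreducible \emph{singular} pseudo-holomorphic curve; since every component of $K\cup G$ other than $C^0$ meets $L$ at most once (an intersection number preserved under the blow-downs because $L\cdot e_k=0$), this singular curve violates $C\cdot L\geq 3$ from Lemma~\ref{lem3}. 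I would rework your middle step along these lines rather than trying to smooth $A\cup B\cup C$.
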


\begin{figure}[h]
\begin{tikzpicture}[scale=0.9]
\draw (0,1.3)--(0,-.25);
\draw (0.2,.2)--(-1,-1);
\draw (-0.2,.2)--(1,-1);
\filldraw (0,0) circle (1.2pt);
\end{tikzpicture}
\caption{Pseudo-holomorphic curves with a triple intersection}
\end{figure}
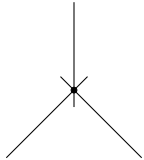

\begin{proof}
If one of the three pseudo-holomorphic curves does not come from $C^i_1$ of $K$, then the curve eventually becomes a $J_j$-holomorphic $(-1)$ curve $e_j$; thus, we have two pseudo-holomorphic curves with tangential intersection by blowing down $e_j$. If the other two pseudo-holomorphic curves come from the first components of $K$, then we have two symplectic lines in $S$ that do not intersect transversally, contradicting the definition of a symplectic line arrangement. Otherwise, we eventually have a singular curve intersecting the complex line $\mathbb{CP}^1$ at most once, which contradicts Lemma~\ref{lem3}.
\end{proof}

\section{Proof of Theorem~\ref{thm1}}
\label{sec-3}
To prove Theorem~\ref{thm1}, we first analyze the effect on symplectic line arrangements under a single rational blowdown surgery. In particular, we investigate the difference between two symplectic line arrangements $S$ and $S'$ corresponding to two minimal symplectic fillings $W$ and $W'$, respectively, where $W$ is obtained from $W'$ by rationally blowing down a negative definite star-shaped plumbing graph $G$ symplectically embedded in $W'$. 

First, we note how $J$-holomorphic curves intersect $K$ and $G$ in $M=W'\cup K$ using lemmas in Section ~\ref{pre}. 
Let $D^i_j$ be the $j^{\text{th}}$ irreducible component of the $i^{\text{th}}$ arm of $G$ and $D^0$ be the central $2$-sphere of $G$.

\begin{prop}
\label{prop1}
For the last component $D^i_{a_i}$ of each $i^{\text{th}}$ arm in $G$, there is a $J$-holomorphic  $(-1)$ curve $e_i$ and a linear chain $L_i$ (possibly empty) of the $J$-holomorphic curves in $M$ such that $D^i_{a_i}$ intersects with one end of $L_i$ and  $e_i$ connects with the other end of $L_i$ and an irreducible component of $K$. Furthermore, we eliminate the $i^{\text{th}}$ arm of $G$ by blowing down $(-1)$ curves consecutively starting from $e_i$.
\end{prop}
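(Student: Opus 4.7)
The plan is to use the iterative blow-down sequence provided by Lemma~\ref{lem1}, together with positivity of intersections, to track how the components of $G$ are eliminated. Applying Lemma~\ref{lem1} repeatedly to $M$ (away from the central $(+1)$-sphere $L \subset K$), we obtain a sequence $M = M_N \to M_{N-1} \to \cdots \to M_0 \cong \mathbb{CP}^2$, in which each $M_j$ is obtained from $M_{j+1}$ by blowing down a $J_{j+1}$-holomorphic $(-1)$-curve $e_{j+1}$. Since $G$ is negative definite and the final symplectic line arrangement in $\mathbb{CP}^2$ consists only of embedded $+1$-spheres, every irreducible component of $G$ must be eliminated at some stage of this sequence.

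Now focus on the terminal vertex $D^i_{a_i}$, whose self-intersection is at most $-2$. For $D^i_{a_i}$ to be blown down, earlier blow-downs must raise its self-intersection to $-1$. I would identify $e_i$ as the $M$-representative of the \emph{first} exceptional curve in the sequence whose blow-down initiates a cascade reaching $D^i_{a_i}$, and let $L_i$ denote the chain of $J$-holomorphic curves in $M$ that mediates between $e_i$ and $D^i_{a_i}$. By Lemmas~\ref{lem2} and \ref{lem3}, each component of $L_i$ is a smooth, genus-zero, $J$-holomorphic $2$-sphere; a self-intersection count then forces each to be a $(-2)$-sphere, since any more negative component would obstruct the cascade. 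Lemma~\ref{triple} together with the star-shaped structure of $G$ rules out branching or multi-intersections within $L_i$, so $L_i$ is a genuine linear chain.

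To see that $e_i$ must meet an irreducible component of $K$, I would argue by contradiction: if $e_i \cup L_i$ were disjoint from $K$, the cascade would be confined to $W' = M \setminus K$, and since pseudo-holomorphic $(-1)$-curves in $M \setminus L$ are mutually disjoint (Lemma~\ref{lem2}), tracking homology classes shows that the image of the arm fails to vanish in $\mathbb{CP}^2$, a contradiction. The cascade claim then follows inductively: blowing down $e_i$ makes the adjacent component of $L_i$ a $(-1)$-curve, and so on through $L_i$; once $L_i$ is exhausted $D^i_{a_i}$ becomes a $(-1)$-curve, and repeating the argument on the shortened arm (with new $(-1)$-curves supplied by Lemma~\ref{lem1}) successively eliminates $D^i_{a_i - 1}, \ldots, D^i_1$. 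The main obstacle I anticipate is ensuring that the cascade propagates cleanly through the entire arm without colliding with other components of $G$ or $K$ in unintended ways; this requires careful bookkeeping via Lemma~\ref{triple} and the global structure of $M$.
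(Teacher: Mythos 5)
Your overall skeleton matches the paper's: fix the blow-down sequence coming from Lemma~\ref{lem1}, produce a $(-1)$-curve $e_i$ and a linear chain $L_i$ feeding $D^i_{a_i}$, invoke Lemma~\ref{triple} to control intersections, and induct down the arm. But the proposal leaves unproven exactly the point that carries the content of the proposition, and you flag it yourself as ``the main obstacle'': why the cascade that eliminates the $i^{\text{th}}$ arm must enter the arm from the \emph{outside} (through $K$) at the terminal vertex $D^i_{a_i}$, rather than reaching $D^i_{a_i}$ through $D^i_{a_i-1}$ or through the central vertex of $G$ --- in which case $L_i$ would contain components of $G$ and the asserted elimination order would fail. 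The paper resolves this by reversing your setup: it takes $D$ to be the \emph{first} irreducible component of $G$ that becomes a $(-1)$-curve during the blow-downs. That choice automatically places every curve of the mediating chain outside $G$, and Lemma~\ref{triple} then forces $D$ to be a terminal vertex of an arm: an interior $D$ would, upon being blown down, create a triple point among its two neighbours in $G$ and the image of the component of $K$ met by $e$. Uniqueness of the chain (no second pair $L'$, $e'$ attached to $D$) is also needed to make ``blowing down consecutively starting from $e_i$'' well defined, and is again a Lemma~\ref{triple} argument; your remark that Lemma~\ref{triple} ``rules out branching within $L_i$'' does not address two disjoint chains both terminating on $D^i_{a_i}$.

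A second, smaller gap: your reason that $e_i$ must meet $K$ (``tracking homology classes shows the image of the arm fails to vanish'') is not an argument. The correct and much shorter reason is minimality of $W'$: a $J$-holomorphic $(-1)$-curve disjoint from every component of $K$ would be contained in $W'=M\setminus K$, contradicting that $W'$ is a minimal filling. Likewise, the claim that each component of $L_i$ is forced to be a $(-2)$-sphere ``since any more negative component would obstruct the cascade'' presupposes the cascade you are trying to construct; the paper only needs, and only proves, that the intermediate curves have degree at most $-2$, which follows from $D$ being the first component of $G$ to reach degree $-1$.
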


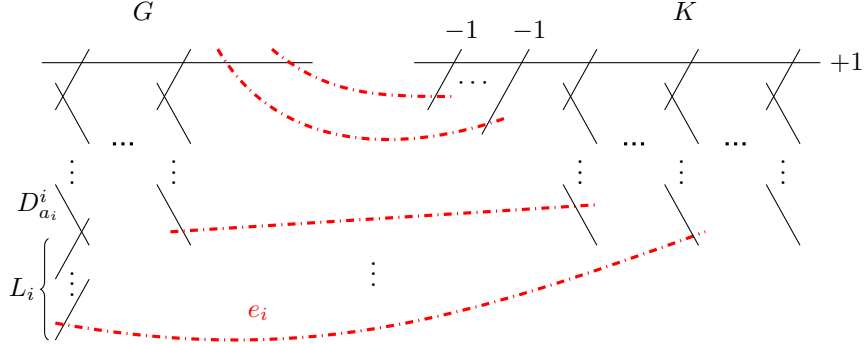
\begin{figure}[h]
\begin{tikzpicture}[scale=0.9]
\begin{scope}
\node[above] at (2.3, 0.5) {$G$};

\draw(0.8,0)--(4.8,0);
%\draw (0.5,0.2)--(0,-0.7);
\draw (1.5,0.2)--(1,-0.7);
\draw (3.,0.2)--(2.5,-0.7);
%\draw[very thick] (4.,0.2)--(3.5,-0.7);
%\node at (3.75, -0.65) {$\vdots$};
%\draw[very thick] (4.,-0.7)--(3.5,-1.6);

%\node at (2.25,-1.3) {$\cdots$};
\draw[dotted,very thick](2.15-.3,-1.2)--(2.5-.3,-1.2);

%\draw (0,-0.3)--(0.5,-1.2);
\draw (1,-0.3)--(1.5,-1.2);
\draw (2.5,-0.3)--(3.,-1.2);

%\node at (0.25, -1.5) {$\vdots$};
\node at (1.25, -1.5) {$\vdots$};
\node at (2.75, -1.5) {$\vdots$};

%\draw (0,-1.8)--(0.5,-2.7);
\draw (1,-1.8)--(1.5,-2.7);
\draw (1.5,-2.3)--(1,-3.2);
\node at (1.25, -3.15) {$\vdots$};
\draw (1.5,-3.2)--(1,-4.1);
\draw [decorate,decoration={brace,mirror,amplitude=3pt},xshift=0pt,yshift=0pt]
	(0.9,-2.6) -- (0.9,-4.1) node [black,midway,xshift=-10pt] 
	{$L_i$};

\node at (0.75, -2.1) {$D^i_{a_i}$};
\node[red] at (4, -3.7) {$e_i$};

\draw (2.5,-1.8)--(3,-2.7);
\draw[red,dashdotted,very thick](4.2,0.2) to[out=-40,in=180] (6.9,-.5);
\draw[red,dashdotted,very thick](3.4,0.2) to[out=-60,in=200] (7.7,-.8);

\draw[red,dashdotted,very thick](2.7,-2.5)--(9,-2.1);
\draw[red,dashdotted,very thick](1.0,-3.85) to[out=-10,in=200](10.6,-2.5);

%\draw[red,dashdotted,very thick](1.1,-2.55)--(3.2,-2.55);
%\draw[red,dashdotted,very thick](3.6,-2.55)--(7.2,-2.55);
%\draw[red,dashdotted,very thick](7.6,-2.55)--(9.2,-2.55);

\node at(5.7,-3){$\vdots$};
\end{scope}

\begin{scope}[shift={(8.5,0)}]
\node[above] at (1.8,0.5){$K$};
\draw(-2.2,0)--(3.8,0);
\draw (0.5,0.2)--(0,-0.7);
\draw (.5+1.5,0.2)--(0+1.5,-0.7);
\draw (3.5,0.2)--(3,-0.7);

\draw (-.5,0.2)--(-1-.2,-0.7-.2*9/5);

\draw (-1.5,0.2)--(-2,-0.7);
\node at (-1.3,-0.3) {$\cdots$};
%\node at (2.25,-1.3) {$\cdots$};
\draw[dotted,very thick](2.15+.25-1.5,-1.2)--(2.5+.25-1.5,-1.2);
\draw[dotted,very thick](2.15+.25,-1.2)--(2.5+.25,-1.2);
	%\draw [decorate,decoration={brace,amplitude=5pt,mirror},xshift=2pt,yshift=-3pt]
	%(4.4,-0.7) -- (5.4,-0.7) node [black,midway,yshift=-10pt] 
	%{\footnotesize $b-(n+1)$};

\draw (0,-0.3)--(0.5,-1.2);
\draw (0+1.5,-0.3)--(0.5+1.5,-1.2);

\draw (3,-0.3)--(3.5,-1.2);

\node at (0.25, -1.5) {$\vdots$};
\node at (0.25+1.5, -1.5) {$\vdots$};

\node at (3.25, -1.5) {$\vdots$};

\draw (0,-1.8)--(0.5,-2.7);
\draw (0+1.5,-1.8)--(0.5+1.5,-2.7);

\draw (3,-1.8)--(3.5,-2.7);

\node[right] at (3.8,0) {$+1$};
%\node[right] at (0.25, -0.25) {$-a_{11}$};
%\node[right] at (1.25, -0.25) {$-a_{21}$};
%\node[right] at (3.25, -0.25) {$-a_{n1}$};
\node[above] at (-.5, 0.2) {$-1$};
\node[above] at (-1.5, 0.2) {$-1$};

%\node[right] at (0.25, -0.75) {$-a_{12}$};
%\node[right] at (1.25, -0.75) {$-a_{22}$};
%\node[right] at (3.25, -0.75) {$-a_{n2}$};

%\node[right] at (0.25, -2.25) {$-a_{1m_1}$};
%\node[right] at (1.25, -2.25) {$-a_{2m_2}$};
%\node[right] at (3.25, -2.25) {$-a_{nm_n}$};

\end{scope}

\end{tikzpicture}

\caption{$J$-holomorphic curves in $M$ intersecting $K$ and $G$}
\end{figure}

\begin{proof}
Note that every $J$-holomorphic $(-1)$ curve in $M=W'\cup K$ intersects some irreducible components of $K$ because $W'$ is a minimal symplectic filling. 
%
%Note that, if a $J$-holomorphic $(-1)$ curve $e$ intersects both $K$ and $G$,  there are unique irreducible components of $K$ and $G$ intersecting $e$ transversally due to Lemma~\ref{triple}.
%
%Note that, if a pseudo-holomorphic $(-1)$ curve $e$ of the blowing downs from $M$ to $\mathbb{CP}^2$ intersects both the image of $K$ and $G$,  there are unique irreducible components of $K$ and $G$ whose image intersecting $e$ transversally due to Lemma~\ref{triple}.
%
Let $D$ be an irreducible component of $G$ that first becomes a pseudo-holomorphic $(-1)$ curve during the blowing-downs from $M=W'\cup K$ to $\mathbb{CP}^2$. Then, there should exist a linear chain of $J$-holomorphic curves $D=D_0,\dots, D_k$ in $M$ such that the last component $D_k$ is a $(-1)$ curve, and the degree of $D_i$ $(1\leq i\leq k-1)$ is less than that of $D_k$ because we cannot increase the degree of $D$ without such a linear chain.  
Hence, we find a linear chain $L$ of $J$-holomorphic curves consisting of $D_1, \dots, D_{k-1}$ with a $J$-holomorphic $(-1)$ curve $e=D_k$ such that $D$ intersects with one end of $L$ and $e$ intersects with the other end of $L$.
Note that $e$ intersects only one irreducible component of $K$ due to Lemma~\ref{triple}. Furthermore, $D$ must be the last component $D^i_{a_i}$ of some $i^{\text{th}}$ arm of $G$. Otherwise, we would have a triple intersection consisting of the images of adjacent components of $D$ and an irreducible component of $K$ intersecting $e$, which is a contradiction.

%Then there exists a $(-1)$ curve $e$ and a linear chain $L$ (possibly empty) of $J$-holomorphic rational curves in $M$ such that $D$ intersects with one end of $L$ and $e$ 
%intersects the other end of $L$. because we cannot increase the degree of $D$ during the blowing downs without such $L$ and $e$.
%Note that $e$ intersects only one irreducible component of $K$ due to Lemma~\ref{triple}. Furthermore, $D$ must be the last component $D^i_{a_i}$ of some $i^{\text{th}}$ arm of $G$. Otherwise we would have a triple intersection consisting of the images of adjacent components of $D$ and an irreducible component of $K$ intersecting $e$,  which is a contradiction.
Suppose there is another linear chain $L'$ and a $(-1)$ curve $e'$ intersecting $D$ as $L$ and $e$. Subsequently, an adjacent component of $D$ with irreducible components of $K$ intersecting $e$ and $e'$ would result in a triple intersection that contradicts Lemma~\ref{triple}. 
Therefore, starting from blowing down $e$, $D$ becomes a $(-1)$ curve under the blowing-downs along $(-1)$ curves coming from a linear chain of $J$-holomorphic curves consisting of $L$, $e$ and some irreducible components of $K$ connected to $D$ via $L$ and $e$. 
Let $G'$ be the image of $G$ under blowing-downs of the $(-1)$ curves above with the $(-1)$ curve coming from $D$. 
Then, $G'$ is still a star-shaped plumbing graph that has the same number of arms with $G$, and the number of irreducible components of $i^{\text{th}}$ arm in  $G'$ is less than that of $G$ by one. Then, using the same argument as before, we see that the last component of the $i^{\text{th}}$ arm in $G'$ is the first irreducible component becoming a $(-1)$ curve among the irreducible components of the $i^{\text{th}}$ arm in $G'$. 
We repeat the same process until all irreducible components of the $i^{\text{th}}$ arm in $G$ disappear under blowing-downs. Furthermore, by performing the same process for each arm in $G$, we conclude that $G$ eventually reduces to a single pseudo-holomorphic rational curve, which is the image of $D^0$, under the blowing-downs.  
%The iterated applications of the above argument show that there exists such linear chain and $J$-holomorphic (-1) curve for the last component of each arm of $K$.
% there must be such linear chains and $J$-holomorphic $(-1)$ curves intersecting the last component of each arm of $G$ and some irreducible components of $K$. In particular,
%Since we reduce arms of $G$ by blowing downs, there are more than two pseudo-holomorphic curves, which are the images of irreducible components of $K$, intersecting the images of $D^0$ in $L$ unless $G$ is linear plumbing. Hence $D^0$ is the  last component of $G$ that becomes $(-1)$ curve, showing that there is also a $J$-holomorphic $(-1)$ curve intersecting $D^j_{a_j}$ in $M$. 
%Regarding uniqueness of such $(-1)$ curves, note that irreducible components of  $i^{\text{th}}$ arm of $G$ disappear in order from the last to first component. So if there are two $J$-holomorphic $(-1)$ curves   intersecting $i^{\text{th}}$ arm of $G$ in $M$, we will have a triple intersection contradicting to Lemma~\ref{triple} under the blowing downs.
%The iterated applications of the above argument show that $G$ eventually reduces to a single pseudo-holomorphic rational curve, which is the image of $D^0$ under the blowing downs. 
\end{proof}

Unlike each arm of $G$, there may be several linear chains of $J$-holomorphic curves in $M$ intersecting $D^0$.
The next proposition shows how $G$ is obtained under the blowing-ups from $\mathbb{CP}^2$ to $M=W'\cup K$.
 
\begin{prop}
\label{prop2}
Let $T'$ be a subset of a symplectic line arrangement $S'$ consisting of the image of arms in $K$ connected to $G$ via $J$-holomorphic curves in $M$ under the blowing-downs from $M$ to $\mathbb{CP}^2$. Then, $T'$ has a unique intersection point, and $G$ is obtained by a sequence of blowing-ups from this point.
\end{prop}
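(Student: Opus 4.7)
The plan is to track how $G$ collapses under the sequence of blow-downs from $M = W' \cup K$ to $\mathbb{CP}^2$. First, I would apply Proposition~\ref{prop1} iteratively to peel off each arm of $G$: for the $i$-th arm, blow down $e_i$, then the components of $L_i$, and finally $D^i_{a_i}, D^i_{a_i-1}, \ldots, D^i_1$ in order. The net effect is that the arm disappears, the self-intersection of $D^0$ increases by one, and $D^0$ ends up directly meeting some irreducible component of the arm of $K$ in $T'$ associated with arm $i$ of $G$ via $L_i$ and $e_i$.

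After all arms have been eliminated, only $D^0$ remains of $G$. By Lemma~\ref{lem2}, $D^0$ never attains non-negative self-intersection in any intermediate manifold (since it is not the $(+1)$-sphere $L = C^0$), and since nothing of $G$ survives in $\mathbb{CP}^2$, $D^0$ must eventually be blown down. Adapting the argument of Proposition~\ref{prop1}, each blow-down that raises $|D^0 \cdot D^0|$ must come from a linear chain of $J$-holomorphic curves connecting $D^0$ to an irreducible component of $K$ in some arm belonging to $T'$; collapse each such chain in turn.

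When $D^0$ finally becomes a $(-1)$ curve, it transversally meets several images of components of $K$: one contributed by each arm of $G$ and one by each chain just collapsed. Blowing $D^0$ down identifies all these images at a single point $p'$ in the next manifold. Tracking $p'$ through the remaining blow-downs to $\mathbb{CP}^2$ yields a point $p$ lying on every symplectic line in $T'$; by positivity of intersection for symplectic $2$-spheres homologous to $\mathbb{CP}^1 \subset \mathbb{CP}^2$, any two distinct lines in $T'$ intersect in exactly one point, so $p$ is this unique common intersection point. Reversing the blow-down sequence then yields the second assertion: starting from $\mathbb{CP}^2$, blow up at $p$ to produce (the preimage of) $D^0$, and then blow up successively at points on the emerging components of $G$ to build its arms.

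The main obstacle will be the second step: rigorously showing that, once the arms of $G$ are gone, every further blow-down raising $|D^0 \cdot D^0|$ really does correspond to a linear chain terminating on an arm of $K$ in $T'$. This requires a careful reuse of Lemma~\ref{triple} to rule out triple intersections outside the allowed pattern, together with inductive bookkeeping after each chain is collapsed to ensure that the intersection configuration around $D^0$ remains under control throughout the process.
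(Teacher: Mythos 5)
Your proposal is correct and follows essentially the same route as the paper: collapse $G$ arm by arm via Proposition~\ref{prop1}, reduce the image of $D^0$ to a $(-1)$ curve through chains of $J$-holomorphic curves terminating on arms of $K$, and blow it down to produce the common point of $T'$. The paper streamlines the last step by reordering the blow-downs into two stages so that collapsing the image of $D^0$ is the final blow-down (making the conclusion immediate, with no need to track the point forward through further blow-downs); also note the small slip that blowing down a $(-1)$ curve meeting $D^0$ raises $D^0\cdot D^0$, i.e.\ lowers $|D^0\cdot D^0|$.
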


\begin{proof}
We arrange a sequence of blowing-downs from $M=W'\cup K$ to $\mathbb{CP}^2$ into two steps: first blow down all $(-1)$ curves that only intersect $K$ and the image of $K$, and then blow down all $(-1)$ curves intersecting $G$ and the image of $G$ to obtain the image $T'\subset S'$ of arms in $K$ connected to $G$ via $J$-holomorphic curves in $M$.
 
First, note that for each arm of $K$, there is at most one arm of $G$ connected to the arm of $K$ via $J$-holomorphic curves; otherwise, we have cycles of $J$-holomorphic curves, which contradicts Lemma~\ref{lem3}.
Now, by the first step of the blowing-downs, the linear chain $L_i$ with $(-1)$ curve $e_i$ in Proposition~\ref{prop1} reduces to a single $(-1)$ curve $e'_i$, and there may  be several $(-1)$ curves intersecting the central curve $D^0$ of $G$.
Then, when we blow down $e'_i$, one of the two curves intersecting $e'_i$ becomes a $(-1)$ curve. 
Because all the irreducible components of each arm in $G$ disappear from the last to the first component, $G$ reduces to a single pseudo-holomorphic curve, which is the image of $D^0$ by blowing down all $(-1)$ curves consecutively. We further blow down $(-1)$ curves so that $D^0$ eventually becomes a $(-1)$ curve $e$. 

Because of the aforementioned blowing-down process, $e$ intersects the image of arms in $K$ connected to $G$ via $J$-holomorphic curves in $M$. Moreover, $e$ corresponds to the last step in the sequence of blowing-downs from $M$ to $\mathbb{CP}^2$, which indicates that the image $T' \subset S'$ of the arms in $K$ connected to $G$ via $J$-holomorphic curves has a unique intersection point.
%Note that the effect of a blowing-down on each arm of $G$ is either increasing the degree of the last component or decreasing the number of irreducible components of an arm. 

\begin{figure}[h]
\begin{tikzpicture}[scale=0.9]
\begin{scope}
\node[above] at (2.3, 0.5) {$G$};

\draw(0.8,0)--(4.8,0);
%\draw (0.5,0.2)--(0,-0.7);
\draw (1.5,0.2)--(1,-0.7);
\draw (3.,0.2)--(2.5,-0.7);
%\draw[very thick] (4.,0.2)--(3.5,-0.7);
%\node at (3.75, -0.65) {$\vdots$};
%\draw[very thick] (4.,-0.7)--(3.5,-1.6);

%\node at (2.25,-1.3) {$\cdots$};
\draw[dotted,very thick](2.15-.3,-1.2)--(2.5-.3,-1.2);

%\draw (0,-0.3)--(0.5,-1.2);
\draw (1,-0.3)--(1.5,-1.2);
\draw (2.5,-0.3)--(3.,-1.2);

%\node at (0.25, -1.5) {$\vdots$};
\node at (1.25, -1.5) {$\vdots$};
\node at (2.75, -1.5) {$\vdots$};

%\draw (0,-1.8)--(0.5,-2.7);
\draw (1,-1.8)--(1.5,-2.7);
\draw (1.5,-2.3)--(1,-3.2);
\node at (1.25, -3.15) {$\vdots$};
\draw (1.5,-3.2)--(1,-4.1);
\draw [decorate,decoration={brace,mirror,amplitude=3pt},xshift=0pt,yshift=0pt]
	(0.9,-2.6) -- (0.9,-4.1) node [black,midway,xshift=-10pt] 
	{$L_i$};

\node at(5.7,-3){$\vdots$};
\draw (2.5,-1.8)--(3,-2.7);
\draw[red,dashdotted,very thick](4.2,0.2) to[out=-40,in=180] (6.9,-.5);
\draw[red,dashdotted,very thick](3.4,0.2) to[out=-60,in=200] (7.7,-.8);

\draw[red,dashdotted,very thick](2.7,-2.5)--(9,-2.1);
\draw[red,dashdotted,very thick](1.0,-3.85) to[out=-10,in=200](10.6,-2.5);

%\draw[red,dashdotted,very thick](1.1,-2.55)--(3.2,-2.55);
%\draw[red,dashdotted,very thick](3.6,-2.55)--(7.2,-2.55);
%\draw[red,dashdotted,very thick](7.6,-2.55)--(9.2,-2.55);

\begin{knot}[
%	draft mode=crossings,
	clip width=20,
	clip radius = 20pt,
	end tolerance = 10pt,
]

%\strand[red,dashdotted,very thick](3.7,0.15)--(5.4,-.7);
%\strand[red,dashdotted,very thick](3.,-2.2)--(7.5,-.85);
%\strand[red,dashdotted,very thick](1.1,-2.55)--(3.2,-2.55);
%\strand[red,dashdotted,very thick](3.6,-2.55)--(7.2,-2.55);
%\strand[red,dashdotted,very thick](7.6,-2.55)--(9.2,-2.55);

\draw[very thin,->](6.5,-5)--(6.5,-.8);
\draw[very thin,->](6.5,-5)--(7.3,-1.2);
\draw[very thin,->](6.5,-5)--(8.9,-2.8);
\draw[very thin,->](6.5,-5)--(10.4,-2.8);
\node at(8.7,-3.5){$\cdots$};

\node[below] at(6.5,-5){blow down to};
\end{knot}

\end{scope}

\begin{scope}[shift={(4.85,-7.2)},scale=1.5]
\draw (-1,0.2)--(2.75,0.2);
\draw [decorate,decoration={brace,amplitude=3pt},xshift=0pt,yshift=5pt]
	(0.5,0.5) -- (1.5,0.5) node [black,midway,yshift=10pt] 
	{$T'$};
\node[above] at (-1.1,0.45){$S'$};
\draw (1,0.5)--(1,-1.5);
\draw (1.75,0.5)--(0.25,-1.5);
\draw (0.25,0.5)--(1.75,-1.5);
\draw (-1,0.4)--(2,-1.5);
\draw (2.75,0.4)--(-.25,-1.5);
%\node[bullet] at (1,-.5){};
\node at (1.25,0.4) {$\cdots$};
\node at (2.15,0.4) {$\cdots$};
\node at (-0.25,0.4) {$\cdots$};
\filldraw (1,-.5) circle (1.35pt);
\node[left] at (0.95,-0.5){$p'$};
\end{scope}
\begin{scope}[shift={(8.5,0)}]
\node[above] at (1.8,0.5){$K$};
\draw(-2.2,0)--(3.8,0);
\draw (0.5,0.2)--(0,-0.7);
\draw (.5+1.5,0.2)--(0+1.5,-0.7);
\draw (-.5,0.2)--(-1-.2,-0.7-.2*9/5);

\draw (3.5,0.2)--(3,-0.7);
\draw (-.5,0.2)--(-1,-0.7);
\draw (-1.5,0.2)--(-2,-0.7);
\node at (-1.3,-0.3) {$\cdots$};
%\node at (2.25,-1.3) {$\cdots$};
\draw[dotted,very thick](2.15+.25-1.5,-1.2)--(2.5+.25-1.5,-1.2);
\draw[dotted,very thick](2.15+.25,-1.2)--(2.5+.25,-1.2);
	%\draw [decorate,decoration={brace,amplitude=5pt,mirror},xshift=2pt,yshift=-3pt]
	%(4.4,-0.7) -- (5.4,-0.7) node [black,midway,yshift=-10pt] 
	%{\footnotesize $b-(n+1)$};

\draw (0,-0.3)--(0.5,-1.2);
\draw (0+1.5,-0.3)--(0.5+1.5,-1.2);

\draw (3,-0.3)--(3.5,-1.2);

\node at (0.25, -1.5) {$\vdots$};
\node at (0.25+1.5, -1.5) {$\vdots$};

\node at (3.25, -1.5) {$\vdots$};

\draw (0,-1.8)--(0.5,-2.7);
\draw (0+1.5,-1.8)--(0.5+1.5,-2.7);

\draw (3,-1.8)--(3.5,-2.7);

\node[right] at (3.8,0) {$+1$};
%\node[right] at (0.25, -0.25) {$-a_{11}$};
%\node[right] at (1.25, -0.25) {$-a_{21}$};
%\node[right] at (3.25, -0.25) {$-a_{n1}$};
\node[above] at (-.5, 0.2) {$-1$};
\node[above] at (-1.5, 0.2) {$-1$};

%\node[right] at (0.25, -0.75) {$-a_{12}$};
%\node[right] at (1.25, -0.75) {$-a_{22}$};
%\node[right] at (3.25, -0.75) {$-a_{n2}$};

%\node[right] at (0.25, -2.25) {$-a_{1m_1}$};
%\node[right] at (1.25, -2.25) {$-a_{2m_2}$};
%\node[right] at (3.25, -2.25) {$-a_{nm_n}$};

\end{scope}

\end{tikzpicture}

\caption{The arms of $K$ connected to $G$ via $J-$holomorphic curves blow down to $T'\subset S'$}

\end{figure}
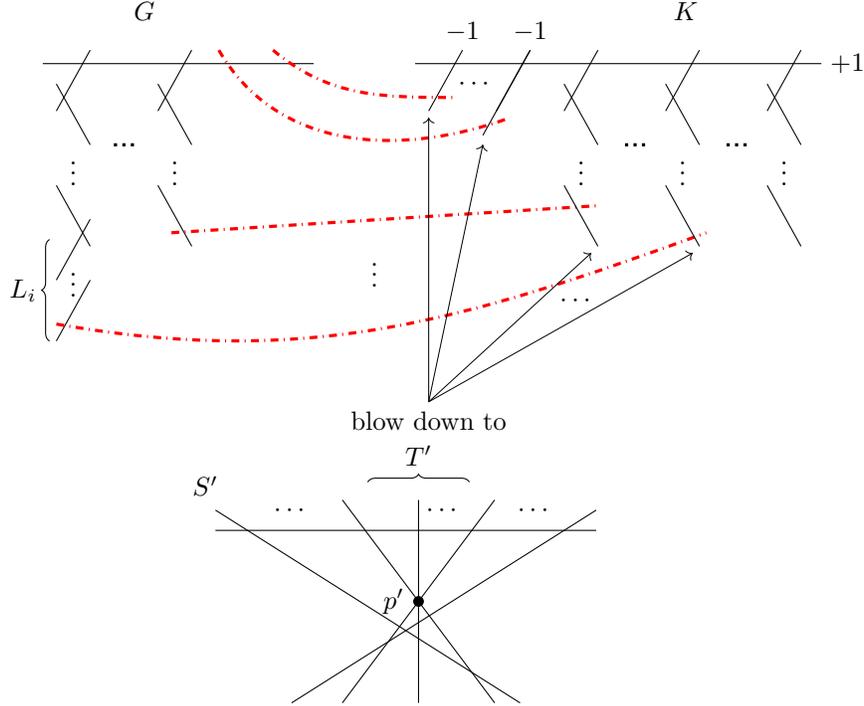
\end{proof}

%Next, since each intersection point of $S'$ corresponds to a pseudo-holomorphic curve $e_i$ whose homology class appears more than one arm of $K$, the homological data of the first components $\{C^i_1\}$ of $K$ for $W'$ completely determine the intersection data of $S'$. 
%Now we want to investigate how symplectic line arrangements are changed under a single rational blowdown surgery by analyzing the effect on the homology classes of $\{C^i_1\}$.
%\begin{remark} Note that, if $G$ is a linear plumbing, the point in Proposition~\ref{prop2} might be a non-multi-intersection point or even non-intersection point in $S'$.
%\end{remark}

Next, we investigate how $S'$ changes by rationally blowing down $G\subset W'$. 
Once we fix a sequence of blowing-downs along $J$-holomorphic $(-1)$ curves $E$ from $M=W'\cup K$ to $\mathbb{CP}^2$, there is a one-to-one correspondence between the set of intersection points in $S'$ and a subset of $E$ whose homology classes appear in more than one arm in $K$. 
Note that if we take another sequence of blowing-downs with the $J'$-holomorphic $(-1)$  curves $F$ from $M'$ to $\mathbb{CP}^2$, each homology class of $f_i\in F$ must be equal to that of some $e_j\in E$. Therefore, the intersection data of $S'$ are determined by a homological expression of $\{C^i_1\}\subset K$ in terms of a complex line $\mathbb{CP}^1$ and some $(-1)$ 2-spheres disjoint from the complex line. 

Now, we arrange a sequence of blowing-downs from $M=W'\cup K$ to $\mathbb{CP}^2$ into two steps, as in the proof of Proposition~\ref{prop2}. Let $E_G$ be a subset of $E$ whose homology classes appear in the homology classes of irreducible components in $G\subset M$. If $e\in E\setminus E_G$ represents an intersection point of $S'$, then $e$ also represents an intersection point of a symplectic line arrangement $S$ corresponding to $W$ because $e$ is a  $(-1)$ curve in $M \setminus G$. Furthermore, since $G$ is obtained by a sequence of blowing-ups from a unique intersection point of $T'\subset S'$, there is at most one $(-1)$ curve in $E_G$ that corresponds to an intersection point of $S'$. Then, we obtain the following relation between $N_S$ and $N_{S'}$ under rationally blowing down along $G$ in $W'$.

%Let $E_G$ be a set of pseudo-holomorphic $(-1)$ curves whose homology classes appear in the homology classes of irreducible components of $G$. Since $G$ is obtained by blowing-ups from the unique intersection point of $T'$, there is at most one pseudo-holomorphic curve in $E_G$ corresponds to an intersection point of $S'$ (Note that if $T'$ has only one symplectic line, the unique intersection point of Proposition~\ref{prop2} is not an intersection point of $S'$).

\begin{prop}
\label{mainthm1}
If a minimal symplectic filling $W$ is obtained from $W'$
by rationally blowing down along $G$, then $N_S=N_{S'}$ or $N_S=N_{S'}-1$, where $S$ and $S'$ are symplectic line arrangements corresponding to $W$ and $W'$, respectively.
\end{prop}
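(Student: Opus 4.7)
The plan is to count the multi-intersection points of $S'$ and $S$ via their correspondence with $(-1)$ curves in a chosen blow-down sequence. For a fixed sequence of blow-downs from $M'=W'\cup K$ to $\mathbb{CP}^2$ along $J$-holomorphic $(-1)$ curves $E$, an intersection point of $S'$ at which $k$ symplectic lines meet corresponds to a unique $(-1)$ curve $e\in E$ whose class (in its ambient $M_j$) appears in the homological expression of exactly $k$ first components $C^i_1$; in particular, such an $e$ accounts for a multi-intersection point of $S'$ iff the number of arms containing $[e]$ is at least three. I would arrange the blow-down in the two-stage form of Proposition~\ref{prop2} and partition $E=(E\setminus E_G)\sqcup E_G$ accordingly.

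First, for $e\in E\setminus E_G$, the curve $e$ lies in $M'\setminus G$; since the rational blowdown only modifies a neighbourhood of $G$ in $W'$, $e$ survives as a $J$-holomorphic $(-1)$ curve in $M=W\cup K$, and its homological incidence with the classes $[C^i_1]$ is preserved. Hence $e$ contributes identically to $N_{S'}$ and to $N_S$. Second, for $e\in E_G$, Proposition~\ref{prop2} says that $G$ arises by iterated blow-ups at the single intersection point $p'\in T'\subset S'$. The reduction of $G$ to a single pseudo-holomorphic $(-1)$ curve (as described in the proof of Proposition~\ref{prop2}) isolates a unique element of $E_G$—the last $(-1)$ curve produced in this reduction—whose class can simultaneously lie in more than one $[C^i_1]$, and this curve contributes to $N_{S'}$ exactly when $|T'|\geq 3$; no other element of $E_G$ is multi-incident. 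The symmetric analysis applied to $M=W\cup K$ after the surgery shows that the new configuration replacing $G$ contributes either $0$ or $1$ to $N_S$, and $1$ precisely when the corresponding set $T$ of arms of $K$ meeting at the new point $p\in S$ satisfies $|T|\geq 3$.

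Combining these contributions yields
\[
N_{S'}-N_S \;=\; \mathbf{1}_{|T'|\geq 3}-\mathbf{1}_{|T|\geq 3},
\]
so the proposition reduces to showing that $|T|\geq 3$ implies $|T'|\geq 3$. The natural route is to establish the stronger inclusion $T\subseteq T'$: since the rational blowdown modifies only the piece of $W'$ occupied by $G$, any $J$-holomorphic chain joining an arm of $K$ to the rational ball in $W$ should come from an arm that was already chained to $G$ in $W'$. Making this rigorous—ruling out new pseudo-holomorphic chains from arms of $K$ outside $T'$ to the rational ball by repeated application of Lemmas~\ref{lem1}--\ref{lem3} in $M=W\cup K$, together with the uniqueness of the reduction provided by Propositions~\ref{prop1} and \ref{prop2} applied on the $W$-side—is the main obstacle and the technical crux of the proposition.
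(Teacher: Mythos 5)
Your bookkeeping framework is sound and matches the paper's: the intersection points of $S'$ and $S$ coming from $E\setminus E_G$ are in degree-preserving bijection, and at most one element of $E_G$ corresponds to an intersection point of $S'$ (the unique point $p'$ of $T'$ from Proposition~\ref{prop2}). The gap is on the $W$-side. Your ``symmetric analysis'' is not symmetric: after the surgery $G$ is replaced by a rational homology ball $B_G$, and the arms of $K$ that formerly met $G$ now close up into a minimal symplectic filling of $(\partial G,\xi_{\text{can}})$ with respect to the concave cap $K_G$. Their image $T$ in $\mathbb{CP}^2$ is the symplectic line arrangement of \emph{that} filling, and a priori a collection of $k\geq 4$ lines can realize several distinct multi-intersection points; nothing you have said forces them all through a single new point $p$. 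So the identity $N_{S'}-N_S=\mathbf{1}_{|T'|\geq 3}-\mathbf{1}_{|T|\geq 3}$ is exactly the assertion that needs proof, not a consequence of the setup. The paper's argument for it is the substantive content of the proposition: because $G$ arises by blow-ups at the single point $p'$, the degree of its central curve $D^0$ strictly exceeds the number of its arms, so by the Bhupal--Stipsicz classification $G$ is linear or $\Gamma_{p,q,r}$ (Figure~\ref{pqr}); consequently $K_G$ has the special form of Figure~\ref{KG}, with arms beginning in $(-1)$-strands, and the only line arrangements realized by fillings of $\partial G$ relative to $K_G$ are the two of Figure~\ref{possibleline}, each with at most one multi-intersection point. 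None of this appears in your proposal.

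Relatedly, the step you single out as the technical crux --- proving $T\subseteq T'$, i.e.\ that $|T|\geq 3$ implies $|T'|\geq 3$ --- is a red herring. In the paper $T$ is \emph{defined} as the image of the same arms $K_{T'}$ under the blow-downs of $W\cup K$, so $T$ and $T'$ consist of the same lines and $|T|=|T'|$ holds tautologically; the intersection data of $S$ and $S'$ agree outside these arms because the corresponding exceptional classes lie in $E\setminus E_G$. Even if you established $T\subseteq T'$, it would not close the actual gap, since you would still need to bound the number of multi-intersection points that the lines of $T$ form among themselves. I would redirect your effort from the inclusion $T\subseteq T'$ to the classification of $G$ and of the line arrangements of fillings of $(\partial G,\xi_{\text{can}})$ with respect to $K_G$.
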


\begin{proof}
Let $K_{T'}\subset K$ be a subset of arms in $K$ whose image under the blowing-downs is $T'$ in Proposition~\ref{prop2}. 
The observations above show that the intersection data of $S$ are equal to that of $S'$ except for the intersection data in $T\subset S$, where $T$ is the image of $K_{T'}$ under a sequence of blowing-downs from $W\cup K$ to $\mathbb{CP}^2$. Hence, we only need to show that $T$ has at most one multi-intersection point.

 As we saw in Proposition~\ref{prop2}, $G$ is obtained from the exceptional curve $e$ by blowing up at the unique intersection point $p'$ of $T'$. Therefore, the number of arms in $G$ is less than or equal to the number of points in $e$ which we blow up to get the central curve $D^0$ of $G$.  Hence, the absolute value of the degree of $D^0$ is strictly larger than the number of arms in $G$, so that $G$ must be linear or $\Gamma_{p,q,r}$ in Figure~\ref{pqr} because of Stipsicz and Bhupal's classification result~\cite{BS}.
 
\begin{figure}[h]
\begin{tikzpicture}[scale=0.6]
\node[bullet] at (0,0){};
\node[bullet] at (1,0){};
\node[bullet] at (3,0){};
\node[bullet] at (4,0){};
\node[bullet] at (5,0){};
\node[bullet] at (7,0){};
\node[bullet] at (8,0){};

\node[bullet] at (4,-1){};
\node[bullet] at (4,-3){};
\node[bullet] at (4,-4){};

\node[below left] at (0,0){$-(p+3)$};
\node[above] at (1,0){$-2$};
\node[above] at (3,0){$-2$};
\node[above] at (4,0){$-4$};
\node[above] at (5,0){$-2$};
\node[above] at (7,0){$-2$};
\node[below right] at (8,0){$-(q+3)$};

\node[left] at (4,-1){$-2$};
\node[left] at (4,-3){$-2$};
\node[below] at (4,-4){$-(r+3)$};

\node at (2,0){$\cdots$};
\node at (6,0){$\cdots$};
\node at (4,-1.9){$\vdots$};

\draw (0,0)--(1.5,0);
\draw (2.5,0)--(5.5,0);
\draw (6.5,0)--(8,0);
\draw (4,0)--(4,-1.5);
\draw (4,-2.5)--(4,-4);

	\draw [thick,decorate,decoration={brace,mirror,amplitude=5pt},xshift=0pt,yshift=-7pt]
	(1,0) -- (3,0) node [black,midway,yshift=-11pt] 
	{$q$};

	\draw [thick,decorate,decoration={brace,mirror,amplitude=5pt},xshift=0pt,yshift=-7pt]
	(5,0) -- (7,0) node [black,midway,yshift=-11pt] 
	{$r$};

	\draw [thick,decorate,decoration={brace,amplitude=5pt},xshift=0pt,xshift=7pt]
	(4,-1) -- (4,-3) node [black,midway,xshift=11pt] 
	{$p$};

\end{tikzpicture}
\caption{Plumbing graph $\Gamma_{p,q,r}$}
\label{pqr}
\end{figure}

%Recall that the blowing-down process from $G$ to a point given in the proof of Proposition~\ref{prop2}. Since one of two curves intersecting $e'_i$ becomes a $(-1)$ curve, we consecutively blow down $(-1)$ curves until $i^{\text{th}}$ arm in $G$ is eliminated. More precisely, the effect of each blowing-down is either 
%increasing the degree of an irreducible component or decreasing the length of an arm.

Recalling the blowing-down process from $G$ to a point in the proof of Proposition~\ref{prop2}, we can observe that the effect of each blowing-down is either 
increasing the degree of an irreducible component or decreasing the length of an arm. Conversely, under the blowing-ups from $p'$ to $G$, we obtain a star-shaped plumbing of the symplectic $2$-spheres $K_G$ consisting of the complex line in $S'$ and the image of $T'\subset S'$. In particular, the effect of each blowing-up is either to decrease the degree of an irreducible component or to increase the length of an arm. Furthermore, the complement of $G$ in the resulting rational symplectic $4$-manifold $\widetilde{M}$ is $K_G$, indicating that $K_G$ is a concave cap of $(\partial G, \xi_{\text{can}})$. As $G$ is either linear or $\Gamma_{p,q,r}$, $K_G$ is represented by Figure~\ref{KG}. The degrees of unlabeled strands in $(b)$ are all $(-2)$.

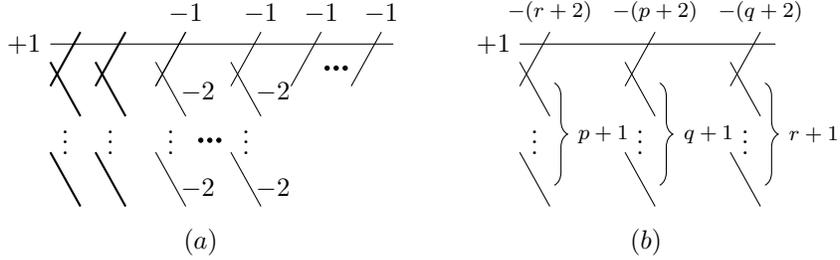
\begin{figure}[h]
\begin{tikzpicture}[scale=0.8]
\begin{scope}[shift={(3.2,0)}]

\node at (2.5, -3.3) {$(a)$};
\draw(0,0)--(5.7,0);
\draw[thick] (0.5,0.2)--(0,-0.7);
\draw[thick] (1.25,0.2)--(0.75,-0.7);

\draw (2.25,0.2)--(1.75,-0.7);
\node[above] at (2.25, .2) {$-1$};
\draw (3.25+.25,0.2)--(2.75+.25,-0.7);
\node[above] at (3.25+.25, .2) {$-1$};
\draw (2.25,0.2)--(1.75,-0.7);

\draw (4.25+.25,0.2)--(3.75+.25,-0.7);
\node[above] at (4.25+.25, .2) {$-1$};

\draw (5.25+.25,0.2)--(4.75+.25,-0.7);
\node[above] at (5.25+.25, .2) {$-1$};

\filldraw (5-.25,-.4) circle(1pt);
\filldraw (4.85-.25,-.4) circle(1pt);
\filldraw (5.15-.25,-.4) circle(1pt);

\node at (2.45, -0.8) {$-2$};
\node at (2.45, -2.4) {$-2$};

\node at (3.7, -0.8) {$-2$};
\node at (3.7, -2.4) {$-2$};

\filldraw (2.5,-1.6) circle(1pt);
\filldraw (2.65,-1.6) circle(1pt);
\filldraw (2.8,-1.6) circle(1pt);

\draw[thick] (0,-0.3)--(0.5,-1.2);
\draw[thick] (0.75,-0.3)--(1.25,-1.2);

\draw (1.75,-0.3)--(2.25,-1.2);
\draw (2.75+.25,-0.3)--(3.25+.25,-1.2);

\node at (0.25, -1.5) {$\vdots$};
\node at (1, -1.5) {$\vdots$};
\node at (2, -1.5) {$\vdots$};
\node at (3+.25, -1.5) {$\vdots$};

\draw[thick] (0,-1.8)--(0.5,-2.7);
\draw[thick] (.75,-1.8)--(1.25,-2.7);
\draw (1.75,-1.8)--(2.25,-2.7);
\draw (2.75+.25,-1.8)--(3.25+.25,-2.7);

\node[left] at (0,0) {$+1$};

%\node[right] at (1.25, -0.25) {$-d_{21}$};

%\node[right] at (0.25, -0.75) {$-d_{12}$};
%\node[right] at (1.25, -0.75) {$-d_{22}$};

\end{scope}
\begin{scope}[shift={(11,0)}]
\node at (2.1, -3.3) {$(b)$};
\draw(0,0)--(4+.25,0);
\draw (0.5,0.2)--(0,-0.7);
\node[above] at (0.5, 0.2) {\footnotesize$-(r+2)$};
\draw (2+.25,0.2)--(1.5+.25,-0.7);
\node[above] at (2+.25, 0.2) {\footnotesize$-(p+2)$};
\draw (3.5+.5,0.2)--(3+.5,-0.7);
\node[above] at (3.5+.5, 0.2) {\footnotesize$-(q+2)$};

	\draw [decorate,decoration={brace,amplitude=5pt},yshift=10pt,xshift=5pt]
	(0.4,-1) -- (0.4,-2.7) node [black,midway,xshift=18.5pt, yshift=0pt] 
	{\footnotesize$p+1$};

	\draw [decorate,decoration={brace,amplitude=5pt},yshift=10pt,xshift=5pt]
	(2.15,-1) -- (2.15,-2.7) node [black,midway,xshift=18.5pt, yshift=0pt] 
	{\footnotesize$q+1$};
	\draw [decorate,decoration={brace,amplitude=5pt},yshift=10pt,xshift=5pt]
	(3.65+.25,-1) -- (3.65+.25,-2.7) node [black,midway,xshift=18.5pt, yshift=0pt] 
	{\footnotesize$r+1$};

\draw (0,-0.3)--(0.5,-1.2);
\draw (1.5+.25,-0.3)--(2+.25,-1.2);
\draw (3+.5,-0.3)--(3.5+.5,-1.2);

\node at (0.25, -1.5) {$\vdots$};
\node at (2, -1.5) {$\vdots$};
\node at (3.25+.25+.25, -1.5) {$\vdots$};

\draw (0,-1.8)--(0.5,-2.7);
\draw (1.5+.25,-1.8)--(2+.25,-2.7);
\draw (3+.5,-1.8)--(3.5+.5,-2.7);

\node[left] at (0,0) {$+1$};
%\node[right] at (0.25, -0.25) {$-d_{11}$};
%\node[right] at (1.25, -0.25) {$-d_{21}$};

%\node[right] at (0.25, -0.75) {$-d_{12}$};
%\node[right] at (1.25, -0.75) {$-d_{22}$};

\end{scope}
\end{tikzpicture}
\caption{Concave cap $K_G$}
\label{KG}
\end{figure}

More specifically, $K_G$ is of the form (a) or (b) in Figure~\ref{KG} depending on whether $G$ is linear or $\Gamma_{p,q,r}$. Note that two unlabeled arms in (a) correspond to the two arms of a linear plumbing graph $G$ whereas the arms with only $(-1)$ or $(-2)$  strands in (a) contribute to the degree of $D^0$.
Let $K'$ be an image of $S'$ in $\widetilde{M}$ containing $K_G$ under the blowing-ups from $e$ to $G$. Then, we have a sequence of blowing-ups from $K'$ to $K$ in terms of $E\setminus E_G$, so that the homological data of $K$ in $M$ consist of the homological data of $K_G$ in $\widetilde{M}$ with the homological data from the blowing-ups from $K'$ to $K$. 
Similarly, the homological data of $K$ in $W\cup K$ consist of the homological data of $K_G$ in $(\widetilde{M}\setminus G) \cup B_G$ with the homological data from the blowing-ups from $K'$ to $K$ in terms of $E\setminus E_G$, where $B_G$ is a rational homology ball filling of $(\partial G, \xi_{\text{can}})$. 
As the arms in $K_G$ become $K_{T'}\subset K$, the intersection data of $T$ are determined by homological data of $K_G$ in $(\widetilde{M}\setminus G) \cup B_G$.  Specifically, the intersection data of $T$ are equal to those of a symplectic line arrangement corresponding to $B_G$ with respect to concave cap $K_G$.
Finally, since there are only two possible symplectic line arrangements in Figure~\ref{possibleline} for any minimal symplectic filling of $(\partial G, \xi_{\text{st}})$ with respect to $K_G$ due to the arms starting with $(-1)$ strands (refer to Proposition 3.2 in ~\cite{CP2} for details), the number of multi-intersection points in $T$ is at most one, as required. 

\begin{figure}[h]
\begin{tikzpicture}[scale=1.2]
\begin{scope}
\draw (0.1,0.2)--(2.6,0.2);
\draw (1,0.5)--(1,-1.5);
\draw (1.5,0.5)--(0.5,-1.5);
\draw (0.5,0.5)--(1.5,-1.5);
\draw (2,0.5)--(0,-1.5);
\draw (2.5,0.5)--(-.5,-1.5);
%\node[bullet] at (1,-.5){};
\node at (2.15,0.4) {$\cdots$};
%\node at (3,0) {};
\end{scope}
\begin{scope}[shift={(4.5,0)}]
\draw (-1,0.2)--(2,0.2);
\draw (1,0.5)--(1,-1.5);
\draw (1.5,0.5)--(0.5,-1.5);
\draw (0.5,0.5)--(1.5,-1.5);
\draw (-1,0.5)--(2,-1.5);
\node at (1.25,0.4) {$\cdots$};

\end{scope}

\end{tikzpicture}
\caption{Two possible symplectic line arrangements}
\label{possibleline}
\end{figure}
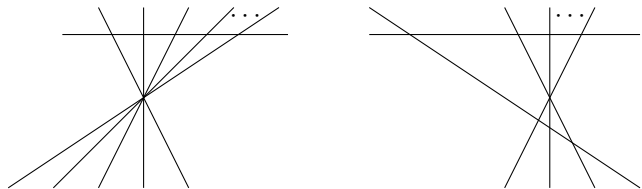

\end{proof}

\begin{proof}[Proof of Theorem~\ref{thm1}]
It follows from Proposition~\ref{mainthm1} and that the minimal resolution graph is obtained from the left-hand symplectic line arrangement in Figure~\ref{possibleline}, which has a unique multi-intersection point.
\end{proof}

\section{Proof of Theorem~\ref{thm2}}
\label{4}

In this section, we show that the converse of Theorem~\ref{thm1} also holds for a Seifert $3$-manifold $Y(-b; (\alpha_1, \beta_1), (\alpha_2, \beta_2),\ldots, (\alpha_n, \beta_n))$ with $b\geq n+2$. As mentioned in Section 2, a minimal symplectic filling $W$ with $N_S\leq 1$ is determined by the homological data of $K$ for $W$. Here $S$ is a symplectic line arrangement corresponding to $W$. Therefore, we need to analyze all possible curve configurations coming from $S$ with $N_S\leq 1$  to show Theorem~\ref{thm2}. The strategy for the proof is similar to the proof of Theorem 1.1 in ~\cite{CP2}. We divide all possible curve configurations into certain types and then show that there are sequences of rational blowdowns from the minimal resolution for each type using lemmas in Section 4, ~\cite{CP2}.
First, when $b\geq n+2$, we determine all possible symplectic line arrangements $S$ with $N_S\leq 1$.

\begin{lem}
Assume that $b\geq n+2$. If the number $N_S$ of multi-intersection points of a symplectic line arrangement $S$ is at most 1, then $S$ is one of the two symplectic line arrangements in Figure~\ref{possibleline}. 

\end{lem}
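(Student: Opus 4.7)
The strategy is to exploit the hypothesis $b\geq n+2$, which guarantees that the concave cap $K$ in Figure~\ref{X} contains at least one short $(-1)$-arm. Let $\ell^*\in S$ be the symplectic line corresponding to such an arm; its proper transform $\tilde{\ell}^*$ in $M=\mathbb{CP}^2\sharp N\overline{\mathbb{CP}^2}$ is a $(-1)$-sphere that meets only the central $(+1)$-sphere $\tilde{\ell}_0=\ell$ among the irreducible components of $K$. I expect the combinatorial type of $S$ to be forced entirely by this single short arm.

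The first step is a homological analysis of $\ell^*$. Writing $[\tilde{\ell}^*]=\ell-\sum m_k e_k$ in the standard basis of $H_2(M)$, the identity $[\tilde{\ell}^*]^2=-1$ forces $\sum m_k^2=2$, so (after relabelling) $[\tilde{\ell}^*]=\ell-e_1-e_2$. For every other symplectic line $\ell_j$ with $j\neq 0,*$, disjointness of distinct arms in $K$ gives $[\tilde{\ell}^*]\cdot[\tilde{\ell}_j]=0$, which unpacks to $m_1^{(j)}+m_2^{(j)}=1$. Geometrically, each other $\ell_j$ passes through exactly one of the (possibly infinitely near) blow-up points $p_1,p_2$ associated with $e_1,e_2$.

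A case analysis on the positions of $p_1,p_2$ then finishes the main argument. If $p_1=p_2$ geometrically in $\mathbb{CP}^2$ (the second blow-up being infinitely near at $E_1\cap\tilde{\ell}^*$), then the second blow-up meets no $\tilde{\ell}_j$ with $j\neq *$, so $m_2^{(j)}=0$ and hence $m_1^{(j)}=1$ for every other $j$; all $b-1$ symplectic lines pass through the common point, giving Figure~\ref{possibleline}(a). If $p_1\neq p_2$, let $k_i$ denote the number of other symplectic lines through $p_i$; then $k_1+k_2=b-2$, and $p_i$ is a multi-intersection iff $k_i\geq 2$. The hypothesis $N_S\leq 1$ forces at most one of $k_1,k_2$ to be $\geq 2$, so up to swapping $k_2\leq 1$: the sub-case $k_2=0$ sends all $b-1$ symplectic lines through $p_1$, again yielding Figure~\ref{possibleline}(a); the sub-case $k_2=1$ with $k_1\geq 2$ (which requires $b\geq 5$) yields Figure~\ref{possibleline}(b).

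The step I expect to be the main obstacle is ruling out the residual sub-case $p_1\neq p_2$ with $k_1=k_2=1$, which would force $b=4$ and $N_S=0$---a combinatorial type not matching either figure. Here I would chase the exceptional sphere $E_1$ produced by the blow-up at $p_1$: homologically $[E_1]\cdot[\tilde{\ell}^*]=1$, so $E_1$ cannot be embedded in the interior of $W$, which is geometrically disjoint from every component of $K$. But $E_1$ also cannot lie in $K$: the length-one short arm of $\ell^*$ has no room, and placing $E_1$ as a tail component of any other arm would yield a forbidden edge between two distinct arms via its nonzero pairing with $\tilde{\ell}^*$. Since $p_1\neq p_2$, no further blow-ups on $E_1$ can introduce the $e_2$-coefficient needed to cancel this pairing, so the sub-case is contradictory and $S$ must be one of the two configurations in Figure~\ref{possibleline}.
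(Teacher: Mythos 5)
Your main argument is essentially the paper's proof, just phrased homologically: the paper also takes the single $(-1)$-arm guaranteed by $b\geq n+2$, notes that its degree permits at most two intersection points with the other symplectic lines, and then lets $N_S\leq 1$ force the dichotomy between ``all lines concurrent'' and ``all but one concurrent.'' That core is correct and matches the paper.

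Your final paragraph, however, is both unnecessary and flawed. The sub-case $p_1\neq p_2$ with $k_1=k_2=1$ is not ``a combinatorial type not matching either figure'': with so few lines it is exactly the degenerate form of the right-hand arrangement (all symplectic lines but one pass through a common point --- here a point through which only $\tilde\ell^*$ and one other line pass), so the conclusion of the lemma already holds and nothing needs to be ruled out. Moreover, the contradiction you attempt rests on a false dichotomy: the exceptional sphere $E_1$ is a $J$-holomorphic $(-1)$-curve in $M=W\cup K$ and is under no obligation to lie entirely inside $W$ or to be an irreducible component of $K$; indeed, since $W$ is minimal, \emph{every} exceptional curve must meet some component of $K$, so $[E_1]\cdot[\tilde\ell^*]=1$ is the expected behaviour rather than an obstruction. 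Deleting that paragraph (or replacing it with the one-line observation that the $(1,1)$-partition is the degenerate right-hand arrangement) leaves a correct proof along the paper's own lines.
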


\begin{proof}
%Let $s^0$ and $s_i$ be the image under the blowing-downs of $(+1)$ 2-sphere and each arm in $K$ respectively. Then each $s_i$ intersects $s_0$ at distinct points because the blowing-downs are disjoint from the central sphere of $K$. 

Since $b\geq n+2$, there is at least one arm in $K$ that consists of a single $(-1)$ $2$-sphere. 
Let $s\in S$ be an image of the $(-1)$ $2$-sphere under blowing-downs. Then, there are at most two intersection points on $s$ due to the degree. Because $N_S\leq 1$, there are only two possibilities: all symplectic lines in $S$ have a common intersection point or all symplectic lines have a common intersection except one symplectic line, which are left-hand and right-hand line arrangements in Figure~\ref{possibleline}, respectively.
\end{proof}

In fact, if $b\geq n+3$, then two symplectic line arrangements in Figure~\ref{possibleline} give all possible symplectic line arrangements  (cf. Lemma 2.5 in \cite{Sta1}). 

\begin{lem}
Assume that $b\geq n+3$. For minimal symplectic fillings of a Seifert fibered $3$-manifold $Y(-b; (\alpha_1, \beta_1), (\alpha_2, \beta_2),\dots, (\alpha_n, \beta_n))$, there are only two possible intersection relations of symplectic line arrangements listed in Figure~\ref{possibleline}. 
\label{linelem}
\end{lem}

\begin{proof}
Let $s\in S$ be an image of the $(-1)$ $2$-sphere under blowing-downs as before. If two intersection points on $s$ are all multi-intersection points, then degrees of the lines in $S$ except $s$ are strictly less than $-1$ after blowing-up all intersection points in $S$. This contradicts the fact that there are at least two arms in $K$ consisting of a single $(-1)$ $2$-sphere.
\end{proof}

When we attempt to obtain a curve configuration $C$ from a symplectic line arrangement $S$, we first blow up all intersection points between symplectic lines in $S$. Once we blow up an exceptional strand, we should blow up all intersection points of the strand except one to allow only strands with degree 
$\leq -2$, if each strand represents an irreducible component of $K$. Without loss of generality, we assume that the first $n$ arms become \emph{essential arms} in $K$ consisting of strands with degrees 
$\leq -2$. Based on this, we can divide all the possible curve configurations obtained from $S$ with $N_S\leq 1$ into the following three types:

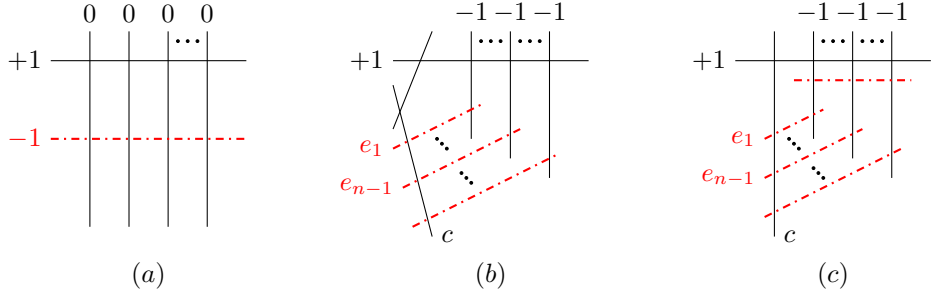
\begin{figure}[h]
\begin{center}
\begin{tikzpicture}[scale=1.3]
\begin{scope}[shift={(-3.5,0)}]

\draw (0,0.2)--(2,0.2);
\draw (0.4,0.5)--(0.4,-1.5);
\draw (0.8,0.5)--(0.8,-1.5);
\draw (1.2,0.5)--(1.2,-1.5);
\draw (1.6,0.5)--(1.6,-1.5);
\draw[red, thick, dashdotted] (0,-0.6)--(2,-0.6);
\node[left,red] at (0,-0.6) {$-1$};
\node[left] at (0,0.2) {$+1$};
\node at (1,-2) {$(a)$};

%\node at (1.4,0.35) {$\cdots$};
\filldraw (1.3,0.4) circle (.4pt);
\filldraw (1.4,0.4) circle (.4pt);
\filldraw (1.5,0.4) circle (.4pt);
\node[above] at (0.4,0.5) {$0$};
\node[above] at (0.8,0.5) {$0$};
\node[above] at (1.2,0.5) {$0$};
\node[above] at (1.6,0.5) {$0$};
\end{scope}

\begin{scope}
\node at (1,-2) {$(b)$};

\draw (0,0.2)--(2,0.2);
\draw (0.4,0.5)--(0,-0.5);
\draw (0,-0.05)--(0.4,-1.6);
\node[right] at (0.4,-1.6) {$c$};
\draw (0.8,0.5)--(0.8,-0.6);
\draw (1.2,0.5)--(1.2,-0.8);
\draw (1.6,0.5)--(1.6,-1);

\draw[red, thick, dashdotted](0,-0.7)--(0.9,-0.25);
\draw[red, thick, dashdotted](0.1,-1.1)--(1.3,-0.5);
\draw[red, thick, dashdotted](0.2,-1.5)--(1.7,-0.75);

\node[left, red] at (0,-0.7) {$e_1$};
\node[left, red] at (0.1,-1.1) {$e_{n-1}$};

\filldraw (1.3-.4,0.4) circle (.4pt);
\filldraw (1.4-.4,0.4) circle (.4pt);
\filldraw (1.5-.4,0.4) circle (.4pt);

\filldraw (1.3,0.4) circle (.4pt);
\filldraw (1.4,0.4) circle (.4pt);
\filldraw (1.5,0.4) circle (.4pt);
\node[above] at (0.8,0.5) {$-1$};
\node[above] at (1.2,0.5) {$-1$};
\node[above] at (1.6,0.5) {$-1$};
\node[left] at (0,0.2) {$+1$};

\filldraw (0.7-.25,-.95+.35) circle (.4pt);
\filldraw (0.75-.25,-1.+.35) circle (.4pt);
\filldraw (0.8-.25,-1.05+.35) circle (.4pt);

\filldraw (0.7,-.95) circle (.4pt);
\filldraw (0.75,-1.) circle (.4pt);
\filldraw (0.8,-1.05) circle (.4pt);

%\node[above left] at (0.95,-1.225) {$\ddots$};
\end{scope}

\begin{scope}[shift={(3.5,0)}]

\filldraw (1.3-.4,0.4) circle (.4pt);
\filldraw (1.4-.4,0.4) circle (.4pt);
\filldraw (1.5-.4,0.4) circle (.4pt);

\filldraw (1.3,0.4) circle (.4pt);
\filldraw (1.4,0.4) circle (.4pt);
\filldraw (1.5,0.4) circle (.4pt);
\node at (1,-2) {$(c)$};

\draw (0,0.2)--(2,0.2);
\draw (0.4,0.5)--(0.4,-1.6);
\node[right] at (0.4,-1.6) {$c$};

\draw (0.8,0.5)--(0.8,-0.6);
\draw (1.2,0.5)--(1.2,-0.8);
\draw (1.6,0.5)--(1.6,-1);
\draw[red, thick, dashdotted] (0.6,0)--(1.8,0);
\draw[red, thick, dashdotted](0.3,-0.6)--(0.9,-0.3);
\draw[red, thick, dashdotted](0.3,-1)--(1.3,-0.5);
\draw[red, thick, dashdotted](0.3,-1.4)--(1.7,-0.7);
\node[left, red] at (0.3,-0.6) {$e_1$};
\node[left, red] at (0.3,-1) {$e_{n-1}$};

\filldraw (0.7-.15,-.95+.35) circle (.4pt);
\filldraw (0.75-.15,-1.+.35) circle (.4pt);
\filldraw (0.8-.15,-1.05+.35) circle (.4pt);

\filldraw (0.8,-.95+.05) circle (.4pt);
\filldraw (0.85,-1.+.05) circle (.4pt);
\filldraw (0.9,-1.05+.05) circle (.4pt);

\node[above] at (0.8,0.5) {$-1$};
\node[above] at (1.2,0.5) {$-1$};
\node[above] at (1.6,0.5) {$-1$};
\node[left] at (0,0.2) {$+1$};

\end{scope}
\end{tikzpicture}
\end{center}
\caption{Three configurations}
\label{startingposition}
\end{figure} 

%When we start with two configurations in Figure~\ref{startingposition}, we can assume without loss of generality that the first $n$ arms become \emph{essential arms} in $K$, which consist of strands with degree less than or equal to $-2$. Since the degree of other arms already $-1$, we can only blow up $e_1$ and $e_2$ among red exceptional strands. In conclusion, we can divide all the possible curve configurations into following three types.

\begin{itemize}
\item{Type A:}
Curve configurations obtained from $(a)$ in Figure~\ref{startingposition} without blowing up the exceptional strand.
\item{Type B:}
Curve configurations obtained from $(b)$ or $(c)$ in Figure~\ref{startingposition} by blowing up at most one $e_i$ $(1\leq i \leq n-1)$.
\item{Type C:}
Curve configurations obtained from $(b)$ or $(c)$ in Figure~\ref{startingposition} by blowing up at least two  $e_i$s $(1\leq i \leq n-1)$.
\end{itemize}

Note that $(a)$ and $(c)$ are obtained from left-hand and right-hand symplectic line arrangements in Figure~\ref{possibleline}, respectively, whereas $(b)$ is obtained from $(a)$ by blowing up the unique exceptional strand in $(a)$.
 
 We now recall several lemmas given in ~\cite{CP2} that are useful for finding a surgical description for a minimal symplectic filling of each type. We first recall the notion of \emph{standard blowing-ups}: for star-shaped $K'$ and $K$ of the same number of arms with central $(+1)$ vertex, we say $K'\leq K$ if $n'_i\leq n_i$ and $a'_{ij}\leq a_{ij}$ for any $i$ and $j$ except for $a'_{in'_i} < a_{in'_i}$ when $n'_i<n_i$, where $-a_{ij}$ $(1\leq j \leq n_i)$ and $-a'_{ij}$ $(1\leq j \leq n'_i)$ are the weights (equivalently, degrees) of the $j^{\text{th}}$-vertex in the $i^{\text{th}}$-arm of $K$ and $K'$, respectively.
 
 \begin{defn}
 Let $C'$ be a configuration of strands obtained from a symplectic line arrangement by blowing-ups which contains a star-shaped plumbing graph $K'$. If $K'\leq K$ and the degree of the strands except $K'$ is $-1$, we obtain a curve configuration $\widetilde{C'}$ from $C'$ by blowing up only at non-intersection points. That is, $\widetilde{C'}$ is obtained by blowing up the non-intersection points of  the last component of each arm of $K'$ consecutively to obtain $n_i$ components and then by blowing up the non-intersection points of each irreducible component to obtain the correct degree $a_{ij}$. In this case, we say that the curve configuration $\widetilde{C'}$ is obtained from $C'$ through \emph{standard blowing-ups}.
 \end{defn}

Next, we compare the standard blowing-up $\widetilde{C'}$ with a curve configuration $C$, which is obtained from $C'$ using non-standard blowing-ups.
%The followings are lemmas given in ~\cite{CP} regarding standard-blowing ups and minimal symplectic fillings, which we will frequently use to find a sequence of rational blowdowns from the minimal resolution to a minimal symplectic filling.

 \begin{lem}[\cite{CP2}]
 Let $C$ be a curve configuration obtained from $C'$ by blowing-ups. If $C$ is differ from $\widetilde{C'}$ only in the components $C^i_j$ of the $i^{\text{th}}$-arm for \mbox{$n'_i\leq j \leq n_i$,} then a minimal symplectic filling $W$ corresponding to $C$ is obtained from a minimal symplectic filling $\widetilde{W}$ corresponding to $\widetilde{C'}$ by a sequence of rational blowdowns.
\label{fundamentallem1}
\end{lem}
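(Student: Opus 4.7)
The plan is to exhibit an explicit finite sequence of rational blowdowns transforming $\widetilde{W}$ into $W$. Since the hypothesis confines the discrepancy between $C$ and $\widetilde{C'}$ to the last $n_i - n'_i + 1$ components of the $i^{\text{th}}$ arm (and in the natural extension to several arms, each arm can be treated independently), the argument reduces to a local analysis on that arm while the remainder of the plumbing is kept fixed.

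First I would set up the two blow-up procedures side by side. Starting from the common intermediate configuration $C'$, the standard process blows up non-intersection points along the tail of arm $i$ consecutively, first lengthening the arm to $n_i$ components of degree $-2$ and then inflating the degrees to the prescribed values $-a_{ij}$. The non-standard process producing $C$ performs at least one blow-up at an intersection point -- typically where the tail of arm $i$ meets an adjacent $(-1)$ exceptional strand -- which alters the intermediate degrees while keeping the first $n'_i - 1$ components of the arm unchanged. A direct bookkeeping shows that the resulting plumbings differ exactly in the claimed range of indices.

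The core step is to isolate, inside a closed rational symplectic $4$-manifold realizing $\widetilde{C'}$, a negative definite plumbing $G$ that lies in the complement of the arm as it appears in $C$ but is absorbed into the arm as it appears in $\widetilde{C'}$. Using $K'\leq K$ together with the continued fraction identities that govern the degree inflation, one verifies that $G$ is a linear or star-shaped Wahl-type chain, hence admits a rational homology ball filling in the sense of \cite{SSW}; its symplectic embedding in $\widetilde{W}$ follows from the blow-up construction, since each component of $G$ is realized as a $J$-holomorphic sphere produced during the standard sequence. Performing the rational blowdown of $G$ in $\widetilde{W}$ and checking that the resulting configuration on the $i^{\text{th}}$ arm matches the degree sequence of $C$ completes the reduction; iterating arm by arm (or taking a disjoint union of the $G$'s for a single simultaneous blowdown) yields the desired sequence from $\widetilde{W}$ to $W$.

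I expect the main obstacle to be the combinatorial verification that the degree discrepancy between the standard and non-standard blow-up sequences always assembles into a genuine Wahl chain. This likely requires splitting into sub-cases depending on whether the non-standard blow-ups occur at intersections with the central $+1$ strand, with an exceptional strand, or with another essential arm, and carefully tracking the continued fraction expansions so that the resulting plumbing $G$ really satisfies the Wahl relations and is negative definite.
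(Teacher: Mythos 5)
The paper does not actually prove this lemma; it is imported verbatim from \cite{CP2}, so your attempt should be measured against the argument there. That argument localizes the difference: since $C$ and $\widetilde{C'}$ agree except on the components $C^i_j$ with $n'_i\le j\le n_i$, the fillings $W$ and $\widetilde{W}$ decompose as a common symplectic piece glued to two different minimal symplectic fillings $P$ and $\widetilde{P}$ of one and the same lens space (the one bounded by the linear plumbing determined by that tail of the arm), where $\widetilde{P}$, coming from the standard blowing-ups, is the minimal resolution of the corresponding cyclic quotient singularity. The theorem of Bhupal--Ozbagci \cite{BOz} then provides a sequence of rational blowdowns from $\widetilde{P}$ to $P$; since every surgery in that sequence is supported in the local piece, the same sequence carries $\widetilde{W}$ to $W$.

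Your proposal has a genuine gap at its core step. You propose to find a single negative definite ``Wahl-type'' plumbing $G$ symplectically embedded in $\widetilde{W}$ (one per arm, blown down simultaneously) whose rational blowdown yields $W$. Two problems. First, there is no reason the discrepancy between $C$ and $\widetilde{C'}$ on a given arm is realized by a single rational homology ball replacement: the local piece of $W$ can be an arbitrary minimal filling of the relevant lens space, and in general reaching it from the minimal resolution requires a genuine sequence of blowdowns in which the later plumbings are embedded in the results of the earlier surgeries, not in $\widetilde{W}$ itself. Second, even granting a single step, not every negative definite linear chain bounds a rational homology ball (only the chains in Lisca's and Stipsicz--Szab\'o--Wahl's lists do), so the ``combinatorial verification'' you flag as the main obstacle is not merely delicate but fails in general. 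The missing idea is precisely the reduction to a lens-space filling problem together with the citation of \cite{BOz} (equivalently, Lisca's classification), which is what replaces ``one blowdown along a Wahl chain in $\widetilde{W}$'' by ``a sequence of rational blowdowns'' and makes the statement true.
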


In addition to the assumptions of Lemma~\ref{fundamentallem1}, we assume that there is a $(-1)$ strand intersecting both $C^{i}_{n_i'}$ and another irreducible component $C^k_{l}$ of $K'$ in $C'$.
Then, we have a slight modification of the Lemma~\ref{fundamentallem1} involving two arms of $K$, as follows.

\begin{lem}[\cite{CP2}]
Suppose that there is a $(-1)$ strand intersecting $C^{i}_{n_i'}$ and $C^{k}_{l}$ of $K'$ in $C'$ with $a'_{kl} < a_{kl}$. If the standard blowing-ups $\widetilde{C'}$ of $C'$ differs from $C$ only in $C^k_{l}$ and components $C^i_{j}$ for $n'_{i} \leq
j \leq n_i$, then a minimal symplectic filling $W$ corresponding to $C$ is obtained from a minimal symplectic filling $\widetilde{W}$ corresponding to $\widetilde{C'}$ by a sequence of rational blowdowns.
\label{fundamentallem2}
\end{lem}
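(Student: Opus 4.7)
The plan is to exhibit a negative-definite star-shaped plumbing $G$ symplectically embedded in $\widetilde W$ such that the rational blowdown of $G$ produces $W$. Since the curve configuration determines the minimal symplectic filling uniquely under the hypotheses of Section~\ref{pre}, it suffices to verify at the level of curve configurations that replacing a tubular neighborhood of $G$ inside $\widetilde{C'}$ by the standard curve configuration coming from the rational homology ball $B_G$ converts $\widetilde{C'}$ into $C$.

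The first step is to isolate $G$. In the passage from $C'$ to $\widetilde{C'}$ by standard blowing-ups, the distinguished $(-1)$ strand $e$ joining $C^i_{n'_i}$ and $C^k_l$ survives untouched, because standard blow-ups occur only at non-intersection points; thus in $\widetilde{C'}$ the proper transform of $e$ is still a $(-1)$ strand meeting both $\widetilde{C^i_{n'_i}}$ and $\widetilde{C^k_l}$. I would take $G$ to be the star-shaped plumbing whose central vertex is this $(-1)$ strand and whose arms consist of: the chain of exceptional curves that the standard procedure attaches to $\widetilde{C^i_{n'_i}}$ in order to extend the $i$-th arm from length $n'_i$ to $n_i$ and to drive the degrees of $C^i_j$ down to $-a_{ij}$ for $n'_i\leq j\leq n_i$; and a chain of exceptional curves attached to $\widetilde{C^k_l}$ whose length matches the degree discrepancy $a_{kl}-a'_{kl}$. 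A direct homological computation, using the fact that $K'\leq K$, then shows $G$ is negative-definite with all weights at most $-2$.

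The second step is to carry out the rational blowdown of $G$ inside $\widetilde{W}$ and identify the outcome with $W$. After excising $G$ and gluing in $B_G$, the self-intersections of $\widetilde{C^i_{n'_i}}$ and $\widetilde{C^k_l}$ are adjusted in exactly the way dictated by the non-standard blowing-ups that produced $C$ from $C'$, while every other component of $K$ is unaffected, so the new curve configuration is (homologically) $C$. Invoking Proposition 3.1 of~\cite{CP2} together with the uniqueness of the isotopy type of a symplectic line arrangement with $N_S\leq 1$ (Proposition 4.2 of~\cite{Sta2}), the resulting minimal symplectic filling must be $W$.

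The main obstacle is the combinatorial bookkeeping required to identify $G$ and to confirm both that it is star-shaped with all weights $\leq -2$ and that the rational blowdown matches the non-standard blow-up pattern in $C$. The coupling of the $i$-th and $k$-th arms of $K$ through $e$ makes the analysis genuinely more delicate than in Lemma~\ref{fundamentallem1}; a cleaner alternative would be to factor the surgery as a preliminary rational blowdown on the subgraph attached to $\widetilde{C^k_l}$ that reduces the situation to a single-arm modification, and then finish by a direct application of Lemma~\ref{fundamentallem1}.
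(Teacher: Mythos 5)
First, a caveat: this lemma is quoted from~\cite{CP2} and the present paper gives no proof of it, so there is nothing internal to compare your argument against; I can only assess the argument itself. As written it has a fatal flaw in the choice of the configuration $G$ to be rationally blown down. You take $G$ to be a star-shaped plumbing whose central vertex is the $(-1)$ strand $e$ and whose arms include ``the chain of exceptional curves that the standard procedure attaches to $\widetilde{C^i_{n'_i}}$ in order to extend the $i$-th arm from length $n'_i$ to $n_i$.'' But those spheres are precisely the components $C^i_j$, $n'_i< j\leq n_i$, of $K$ itself: they are part of the concave cap, not of the filling. A rational blowdown that converts $\widetilde{W}$ into another filling $W$ of the \emph{same} boundary $(Y,\xi_{\text{can}})$ must be performed on a configuration embedded in the interior of $\widetilde{W}=M\setminus\nu(K)$, hence disjoint from every irreducible component of $K$. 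Your $G$ is not, so excising it would change the boundary $3$-manifold and the surgery would not produce a filling of $Y$ at all.

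There are two further unaddressed points even setting that aside. The central vertex of your $G$ has self-intersection $-1$, whereas the plumbings rationally blown down in this circle of papers (Lisca's linear chains, the Stipsicz--Szab\'o--Wahl and Bhupal--Stipsicz families, e.g.\ $\Gamma_{p,q,r}$) all have weights $\leq -2$; with a $(-1)$ central vertex and two arms negative definiteness can already fail (two $(-2)$ arms give a degenerate intersection form), and in any case you never verify the essential hypothesis that $\partial G$ bounds a rational homology ball $B_G$ --- you simply posit $B_G$ and glue it in. The correct configuration has to be located entirely inside $\widetilde{W}$, built from the $(-1)$ strands of $\widetilde{C'}$ lying outside $K$ and identified homologically by comparing the standard blow-up sequence producing $\widetilde{C'}$ with the non-standard one producing $C$; one then checks it is one of the recognized rational-blowdown chains. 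Your closing remark --- factor the surgery into a modification at $C^k_l$ followed by an application of Lemma~\ref{fundamentallem1} --- is much closer to a viable strategy than the main construction, but it is left entirely unexecuted, and the coupling of the two arms through $e$ is exactly the point where the bookkeeping must actually be done.
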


%Lastly, we have an explicit description of $\widetilde{W}$, if $K'$ is a concave cap for another weighted homogeneous surface singularity $X'$.

Finally, if $K'$ is a concave cap for another Seifert $3$-manifold $Y'$, we have an explicit description of $\widetilde{W}$. For this purpose, let $X$ and $X'$ denote the corresponding weighted homogeneous surface singularities to $Y$ and $Y'$, respectively.

\begin{lem}[\cite{CP2}]
If $K'$ is a concave cap for a Seifert $3$-manifold $Y'$ such that $C'$ is a curve configuration,  there is a symplectic embedding of the minimal resolution of $X'$ to the minimal resolution of $X$ so that $\widetilde{W}$ is obtained from the minimal resolution of $X$ by replacing the minimal resolution of $X'$ with a minimal symplectic filling $W'$ of $Y'$ corresponding to $C'$.
\label{fundamentallem3}
\end{lem}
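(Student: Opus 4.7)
The plan is to exhibit a common symplectic cobordism $Z$ from $(Y', \xi'_{\text{can}})$ to $(Y, \xi_{\text{can}})$ that sits inside both $\widetilde{W}$ and the minimal resolution of $X$, so that capping off $Z$ with different minimal symplectic fillings of $Y'$ produces the two sides of the claimed equality.

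First, I would form the closed rational symplectic $4$-manifold $M' := W' \cup_{Y'} K'$, whose strand configuration is exactly $C'$, and in parallel $M := \widetilde{W} \cup_Y K$, whose configuration is the standard blowing-up $\widetilde{C'}$. By the very definition of standard blowing-ups (only non-intersection points of strands outside $K'$ are blown up), the blow-up map $\pi : M \to M'$ is the identity on a neighborhood of $K'$. Hence $K' \subset M'$ also sits symplectically in $M$, and $W' = M' \setminus \mathrm{int}(K') \subset M$. Define
\[
Z := M \setminus \mathrm{int}(K \cup W'),
\]
a symplectic cobordism from $Y'$ to $Y$. By construction $\widetilde{W} = W' \cup_{Y'} Z$.

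Next, I would observe that $Z$ depends only on the blow-up data used to pass from $C'$ to $\widetilde{C'}$ outside $K'$, and in particular is insensitive to the choice of $W'$: any two minimal symplectic fillings of $Y'$ corresponding to the portion of $C'$ lying inside $K'$ fit into the same $Z$. Repeating the construction with $W'$ replaced by the minimal resolution of $X'$ therefore produces the same cobordism $Z$, now glued to the minimal resolution of $X'$. The resulting minimal symplectic filling of $Y$ is the one whose curve configuration is obtained by standard blowing-ups applied to the natural configuration $(\text{min res } X') \cup K'$.

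Finally, I would verify that this latter filling is precisely the minimal resolution of $X$. The minimal resolution corresponds to the curve configuration in which every arm of $K$ has its full length $n_i$ and weights $a_{ij}$ dictated by the plumbing graph $\Gamma$, and standard blowing-ups of $(\text{min res } X') \cup K'$ reproduce exactly that configuration because the arms of $K'$ sit inside arms of $K$ as prefixes (the hypothesis $K' \leq K$). This yields the desired symplectic embedding of the minimal resolution of $X'$ into the minimal resolution of $X$ with complement $Z$, and replacing the former by $W'$ produces $\widetilde{W}$. The main obstacle is precisely this last step: one must track the weights $a_{ij}$ and arm lengths through the standard blow-ups and confirm that starting from $(\text{min res } X') \cup K'$ reproduces $\Gamma$ exactly, a combinatorial matching that relies on both $K' \leq K$ and the canonical form of the minimal resolutions of $X'$ and $X$.
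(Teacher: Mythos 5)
This lemma is imported from \cite{CP2} and the present paper gives no proof of it, so there is nothing to compare against line by line; I can only judge your reconstruction on its own terms. Your architecture --- cut $M=\widetilde{W}\cup\nu(K)$ along $\partial\nu(K')$ to produce a symplectic cobordism $Z$ with $\widetilde{W}=W'\cup_{Y'}Z$, observe that $Z$ is independent of which filling of $Y'$ it caps, and then identify $(\text{min res }X')\cup_{Y'}Z$ with the minimal resolution of $X$ --- is the right one and is what an argument for this statement must look like. But two points need attention. First, you misquote the definition of standard blowing-ups: the blow-up centers are \emph{non-intersection points lying on the strands of $K'$} (on the last component of each arm, and then on each component to adjust the degrees $a_{ij}$), not points ``outside $K'$.'' Consequently $\pi\colon M\to M'$ is \emph{not} the identity on a neighborhood of $K'$, and $K'$ itself does not sit in $M$ --- only its proper transform does, with strictly more negative self-intersections (that is precisely how $K'$ grows into $K$). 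What saves your construction is that all centers lie in the interior of $\nu(K')$, hence away from $W'=M'\setminus\mathrm{int}\,\nu(K')$; so $W'\subset M$ and $Z:=(\nu(K')\#k\overline{\mathbb{CP}^2})\setminus\mathrm{int}\,\nu(K)$ are still well defined. You should route the argument through the complement of the cap, not through a spurious embedding of $K'$.

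Second, and more seriously, the step you yourself flag as ``the main obstacle'' --- verifying that capping $Z$ with the minimal resolution of $X'$ yields the minimal resolution of $X$ --- is the entire content of the lemma, and your proposal only asserts it. The soft part (the cobordism formalism, the independence of $Z$ from the filling) is routine; the lemma's force is the combinatorial identification of $(\text{min res }X')\cup_{Y'}Z$ with the plumbing on $\Gamma$. This requires knowing the curve configuration of the minimal resolution relative to $K'$ (in this paper's language, the standard blowing-up of the concurrent line arrangement) and checking that standard blowing-ups compose so that the standard blow-up of that configuration is again the configuration of a minimal resolution, now for $\Gamma$. The phrase ``the arms of $K'$ sit inside arms of $K$ as prefixes'' gestures at this but does not carry it out; note in particular that $K'\leq K$ constrains the cap, and one must still translate that constraint into the dual statement about $\Gamma'$ sitting inside $\Gamma$. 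As written, the proposal is a correct plan with the decisive verification left open.
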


With these three fundamental lemmas, the proof of Theorem~\ref{thm2} is essentially identical to the proof of Theorem 1.1 for $b\geq 5$ case in \cite{CP2}, but we provide a detailed proof for completeness.

\subsection{Proof for type A}
Evidently all strands $K'$, except the exceptional strand in $(a)$ in Figure~\ref{startingposition}, satisfy $K'\leq K$. Hence, by repeatedly applying Lemma~\ref{fundamentallem1} to the arms of $K$, we show that any minimal symplectic filling $W$, whose corresponding curve configuration $C$ is of type A,  is obtained by a sequence of rational blowdowns from $\widetilde{W}$, where $\widetilde{W}$ is a minimal symplectic filling corresponding to the standard blowing-ups of $(a)$, which is known to be deformation equivalent to the minimal resolution of corresponding singularity. Actually, each minimal symplectic filling of type A is obtained by replacing each arm of $\Gamma$ with its minimal symplectic filling.

\subsection{Proof for type B}
Without loss of generality, we assume that the first and second arms of a configuration $(b)$ or $(c)$ in Figure~\ref{startingposition} become the first and second arms of $K$ in $C$, respectively,
and the proper transforms of $e_i$ $(2 \leq i \leq n-1)$ are not irreducible components of $K$. 
Since we do not blow up exceptional strands $e_i$s for $2 \leq i \leq n-1$, we can get the first and second arms of $K$, leaving the single $(-1)$ arms unchanged.
Hence, we arrange the order of blowing-ups from a configuration $(b)$ or $(c)$ in Figure~\ref{startingposition} to $C$ so that we have an intermediate configuration $C'$ of strands containing $K'\leq K$ as shown in Figure~\ref{subcap}.
Note that the degrees of strands in $C' \setminus K'$ are all $-1$ and the homological data of the first and second arms of $K'$ in $C'$ are equal to those of $K$ in $C$.

\begin{figure}[h]
\begin{center}
\begin{tikzpicture}[scale=1.2]
\draw(0,0)--(5,0);
\draw (0.5,0.2)--(0,-0.7);
\draw (1.5,0.2)--(1,-0.7);
\draw (2.5,0.2)--(2,-0.7);
\draw (3.5,0.2)--(3,-0.7);
\draw (4.5,0.2)--(4,-0.7);
\node at (4,0.1) {$\cdots$};

	\draw [decorate,decoration={brace,amplitude=5pt},xshift=0pt,yshift=3pt]
	(2.5,0.2) -- (4.5,0.2) node [black,midway,yshift=8pt] 
	{\footnotesize $b-3$};

\draw (0,-0.3)--(0.5,-1.2);
\draw (1,-0.3)--(1.5,-1.2);

\node at (0.25, -1.5) {$\vdots$};
\node at (1.25, -1.5) {$\vdots$};

\draw (0,-2)--(0.5,-2.9);
\draw (1,-2)--(1.5,-2.9);

\node[left] at (0,0) {$+1$};
\node[right] at (0.25, -0.25) {$-a_{11}$};
\node[right] at (1.25, -0.25) {$-a_{21}$};

\node[right] at (2.25, -0.25) {$-1$};
\node[right] at (3.25, -0.25) {$-1$};
\node[right] at (4.25, -0.25) {$-1$};

\node[right] at (0.25, -0.75) {$-a_{12}$};
\node[right] at (1.25, -0.75) {$-a_{22}$};

\node[right] at (0.25, -2.45) {$-a_{1n_1}$};
\node[right] at (1.25, -2.45) {$-a_{2n_2}$};

\end{tikzpicture}
\end{center}
\caption{Concave cap $K'$}
\label{subcap}
\end{figure}
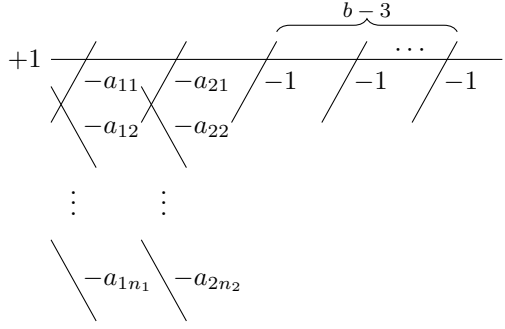

Let $\widetilde{C'}$ be a curve configuration obtained from $C'$ through standard blowing-ups. 
Then, by repeatedly applying Lemma~\ref{fundamentallem1} again, we show that a minimal symplectic filling $W$ corresponding to $C$ is obtained by a sequence of rational blowdowns from a minimal symplectic filling $\widetilde{W}$ that corresponds to the curve configuration $\widetilde{C'}$. 
 However, since $K'$ itself is a concave cap for a lens space $L$, a minimal symplectic filling 
 $\widetilde{W}$ is obtained from the minimal resolution of a singularity corresponding to $Y$ by replacing the minimal resolution of a cyclic quotient singularity corresponding to $L$ with its minimal symplectic filling $W'$ corresponding to $C'$ by Lemma~\ref{fundamentallem3}. 
 As it is known that every minimal symplectic filling of $L$ is obtained from the minimal resolution by a sequence of rational blowdowns~\cite{BOz}, we have a sequence of rational blowdowns from the minimal resolution to $W$, as desired. Especially, we can say $W$ is obtained by replacing disjoint linear subgraphs of $\Gamma$, containing a subgraph consisting of the first and the second arm together with the central vertex, with their minimal symplectic fillings (cf. Section 4 in ~\cite{CP2}).
 
 \subsection{Proof for type C}
 We prove Theorem~\ref{thm2} for a curve configuration $C$ of type C by induction on the number of $e_i$s blown-up to obtain $C$ from a configuration $(b)$ or $(c)$ in Figure~\ref{startingposition}. If we blow up at most one $e_i$ to obtain a curve configuration $C$ from $(b)$ or $(c)$ in Figure~\ref{startingposition}, then $C$ is of type B, which is proven. 
 
Now, we prove the case of type $C$  inductively. Let $C$ be a curve configuration obtained from $(b)$ or $(c)$ in Figure~\ref{startingposition} by blowing up $m$ $e_i$s. To reiterate, without loss of generality, we assume that the first $(m+1)$ arms of $(b)$ or $(c)$ become the first $(m+1)$ arms of $K$ and that the proper transforms of $e_i$ $(m+1 \leq i \leq n-1)$ are not irreducible components of $K$. Unlike for type B, we cannot obtain the first $m$ arms of $K$ without blowing up $e_{m}$. Instead, by rearranging the order of blowing-ups from $(b)$ or $(c)$ to $C$, we can obtain 
a configuration $C'$ containing $K'\leq K$ whose first $m$ arms are equal to that of $K$, except for one irreducible component $C'^1_l$ in the first arm of $K'$ with other arms of single $(-1)$ strands. The proper transforms of $e_i$ $(m\leq i \leq n -1)$ remain exceptional strands that only intersect $C'^1_l$ and single $(-1)$ arms in $C'$. Note that $a_{1l}> a'_{1l}$, where 
$-a_{1l}$ and $-a'_{1l}$ are the degrees of the $l^{\text{th}}$ component in the first arms of $K$ and $K'$, respectively. The first $m$ arms of $K'$ with the proper transform of $e_m$ can be illustrated,  as in Figure~\ref{case2c'}. The left-hand and right-hand figures are based on $(b)$ and $(c)$ in Figure~\ref{startingposition}, respectively.

\begin{figure}[h]
\begin{center}
\begin{tikzpicture}[xscale=1.1,yscale=.7]
%\begin{tikzpicture}[xscale=1.1,yscale=.9]
\begin{scope}
\draw(0,0)--(3.5,0);
\draw (0.5,0.2)--(0,-0.7);
\draw (1.8,0.2)--(1.3,-0.7);
\draw (3.2,0.2)--(2.7,-2);

\draw (0,-0.3)--(0.5,-1.2);
\draw (1.3,-0.3)--(1.8,-1.2);
%\draw (2,-0.3)--(2.5,-1.2);

\node at (0.25, -1.2) {$\vdots$};
%\node at (2.25, -1.2) {$\vdots$};

\draw (0.5,-1.6)--(0.,-2.5);
%\draw (2.5,-1.6)--(2.,-2.5);

\draw (0.,-2.)--(0.5,-2.9);
%\draw (2.,-2.)--(2.5,-2.9);

\draw (0.5,-3.3)--(0,-4.2);

%\draw[red] (.2,-1.7)--(3.,-1.8);
\draw[red,thick, dashdotted] (.2,-1.7)--(1.35,-1.74);
\draw[red,thick, dashdotted] (3,-1.8)--(1.75,-1.76);
\node[red,below] at (3,-1.8){$e_m$};

\draw (1.8,-3.3)--(1.3,-4.2);

%\draw[red,thick, dashdotted] (0.2,-3.4)--(1.8,-3.7);
%\node[red,below] at (1.8,-3.7){$e$};
\draw (0,-3.8)--(0.5,-4.7);
\draw (1.3,-3.8)--(1.8,-4.7);

\node at (0.25, -4.7) {$\vdots$};
\node at (1.55, -4.7) {$\vdots$};

\draw (0,-5.1)--(0.5,-6);
\draw (1.3,-5.1)--(1.8,-6);
%\draw (2,-5.1)--(2.5,-6);

%\node at (1.25,-2.15) {$\vdots$};
\filldraw (1.55,-1.65) circle (0.25pt);
\filldraw (1.55,-1.75) circle (0.25pt);
\filldraw (1.55,-1.85) circle (0.25pt);
\filldraw (1.55,-1.95) circle (0.25pt);
\filldraw (1.55,-2.05) circle (0.25pt);
\filldraw (1.55,-2.15) circle (0.25pt);
\filldraw (1.55,-2.25) circle (0.25pt);
\filldraw (1.55,-2.35) circle (0.25pt);
\filldraw (1.55,-2.45) circle (0.25pt);
\filldraw (1.55,-2.55) circle (0.25pt);
\filldraw (1.55,-2.65) circle (0.25pt);
\filldraw (1.55,-2.75) circle (0.25pt);

%\filldraw (2.25,-3.6) circle (0.25pt);
%\filldraw (2.25,-3.5) circle (0.25pt);

%\filldraw (2.25,-3.7) circle (0.25pt);
%\filldraw (2.25,-3.8) circle (0.25pt);
%\filldraw (2.25,-3.9) circle (0.25pt);
%\filldraw (2.25,-4) circle (0.25pt);
%\filldraw (2.25,-4.1) circle (0.25pt);
%\filldraw (2.25,-4.2) circle (0.25pt);
%\filldraw (2.25,-4.3) circle (0.25pt);
%\filldraw (2.25,-4.4) circle (0.25pt);
%\filldraw (2.25,-4.5) circle (0.25pt);
%\filldraw (2.25,-4.6) circle (0.25pt);

\node at (0.25, -2.9) {$\vdots$};
%\draw[red] (0.5,-1.75) to[out=160,in=110](-0.5,-5.5) to[out=290,in=180](1.25,-6.5) to[out=0,in=250](2.5,-5.5) to[out=70,in=300](2.4,-0.3);

\node[left] at (0,0) {$+1$};
\node[right] at (0.35, -0.25) {$-a_{11}$};
\node[right] at (1.65, -0.25) {$-a_{m1}$};
\node[right] at (3.1, -0.35) {$-1$};

\node[right] at (0.35, -0.95) {$-a_{12}$};
\node[right] at (1.65, -0.95) {$-a_{m2}$};
%\node[right] at (2.25, -0.75) {$-a_{32}$};

\node[right] at (0.35, -2.15) {$-a_{1l}'$};
\node[left] at (0.25, -1.65) {$C'^1_{l}$};

%\node[right] at (1.25, -2.45) {$-a_{2n_2}$};
%\node[right] at (2.25, -2.45) {$-a_{3n_3}$};

\node[right] at (0.25, -5.45) {$-a_{1n_1}$};
\node[right] at (1.55, -5.45) {$-a_{mn_m}$};
%\node[right] at (2.25, -5.45) {$-a_{3n_3}$};
\filldraw (0.7-.05,-4.2) circle (1pt);
\filldraw (0.9-.05,-4.2) circle (1pt);
\filldraw (1.1-.05,-4.2) circle (1pt);

\end{scope}
\begin{scope}[shift={(5.2,0)}]
\draw(0,0)--(3.5,0);
\draw (0.5,0.2)--(0,-0.7);
\draw (1.8,0.2)--(1.3,-0.7);
\draw (3.2,0.2)--(2.7,-2.5);

%\draw[red](3,-2.4)--(0.1,-0.2);
\draw[red,thick, dashdotted](3,-2.4)--(1.75,-1.44);
\draw[red,thick, dashdotted](1.35,-1.15)--(0.1,-0.2);
%\node[red,below] at (1.35,-1.15){$e$};
\node[red,below] at (3,-2.4){$e_m$};

\draw (0,-0.3)--(0.5,-1.2);
\draw (1.3,-0.3)--(1.8,-1.2);
%\draw (2,-0.3)--(2.5,-1.2);

\node at (0.25, -1.3) {$\vdots$};
\node at (1.55, -1.3) {$\vdots$};
%\node at (2.25, -1.5) {$\vdots$};

\draw (0.5,-2)--(0,-2.9);
\draw (1.8,-2)--(1.3,-2.9);
%\draw (2.5,-2)--(2.,-2.9);

\draw (0.,-2.5)--(0.5,-3.4);
\draw (1.3,-2.5)--(1.8,-3.4);
%\draw (2.,-2.5)--(2.5,-3.4);

\filldraw (0.25,-3.7) circle (0.25pt);

%\draw[red,thick, dashdotted] (0.2,-2.1)--(1.8,-2.4);
%\node[red,below] at (1.8,-2.4){$e$};

\node[left] at (0,0) {$+1$};
\node[right] at (0.35, -0.25) {$-a_{11}'$};
\node[right] at (1.65, -0.25) {$-a_{m1}$};
\node[right] at (3.1, -0.35) {$-1$};

%\node[right] at (0.35, -0.95) {$-a_{12}$};
\node[right] at (1.65, -0.95) {$-a_{m2}$};
%\node[right] at (2.25, -0.75) {$-a_{32}$};

\draw (0,-5.1)--(0.5,-6);
\draw (1.3,-5.1)--(1.8,-6);
%\draw (2,-5.1)--(2.5,-6);
\filldraw (0.7,-4.2) circle (1pt);
\filldraw (0.9,-4.2) circle (1pt);
\filldraw (1.1,-4.2) circle (1pt);

\node[right] at (0.25, -5.45) {$-a_{1n_1}$};
\node[right] at (1.55, -5.45) {$-a_{mn_m}$};
%\node[right] at (2.25, -5.45) {$-a_{3n_3}$};

\filldraw (0.25,-3.7) circle (0.25pt);
\filldraw (0.25,-3.8) circle (0.25pt);
\filldraw (0.25,-3.9) circle (0.25pt);
\filldraw (0.25,-4) circle (0.25pt);
\filldraw (0.25,-4.1) circle (0.25pt);
\filldraw (0.25,-4.2) circle (0.25pt);
\filldraw (0.25,-4.3) circle (0.25pt);
\filldraw (0.25,-4.4) circle (0.25pt);
\filldraw (0.25,-4.5) circle (0.25pt);
\filldraw (0.25,-4.6) circle (0.25pt);

\filldraw (1.55,-3.7) circle (0.25pt);
\filldraw (1.55,-3.8) circle (0.25pt);
\filldraw (1.55,-3.9) circle (0.25pt);
\filldraw (1.55,-4) circle (0.25pt);
\filldraw (1.55,-4.1) circle (0.25pt);
\filldraw (1.55,-4.2) circle (0.25pt);
\filldraw (1.55,-4.3) circle (0.25pt);
\filldraw (1.55,-4.4) circle (0.25pt);
\filldraw (1.55,-4.5) circle (0.25pt);
\filldraw (1.55,-4.6) circle (0.25pt);

%\draw[red] (0.4,-0.35) to[out=150,in=110](-0.2,-5.5) to[out=290,in=180](1.25,-6.5) to[out=0,in=250](2.5,-5.5) to[out=70,in=320](2.25,-0.3);

\end{scope}
\end{tikzpicture}
\caption{Part of intermediate configuration $C'$}
\label{case2c'}
\end{center}
\end{figure}
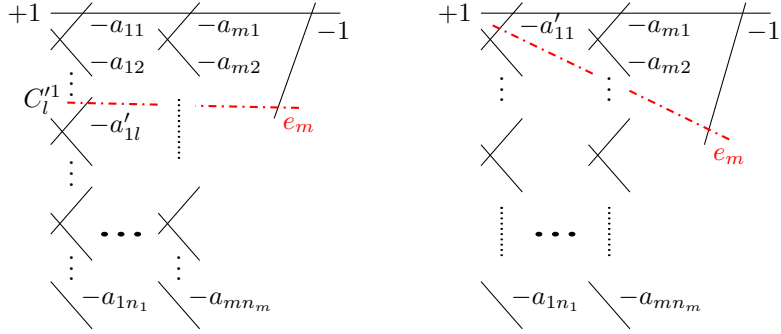

Let $\widetilde{W}$ be a minimal symplectic filling corresponding to a curve configuration $\widetilde{C'}$ obtained by $C'$ using standard blowing-ups. Then, since we do not blow up $e_i$ for $m+1\leq i \leq n-1$ to obtain $C$ from $C'$, we can show that a minimal symplectic filling $W$ corresponding to $C$ is obtained from $\widetilde{W}$ by a sequence of rational blowdowns using  Lemma~\ref{fundamentallem2} for $(m+1)^\text{th}$ arm of $K$, and Lemma~\ref{fundamentallem1} repeatedly for the other arms of $K$. 
From the construction, note that $\widetilde{C'}$ is obtained by blowing up $(m-1)$ $e_i$s. 
Therefore, there is a sequence of rational blowdowns from the minimal resolution to $\widetilde{W}$ based on the induction hypothesis, which concludes the proof.

\section{Counter examples}
In this section, we claim that the condition $N_S\leq 1$ in Theorem~\ref{thm2} is insufficient if $b=n+1$. That is, there is a family of minimal symplectic fillings with $N_S\leq 1$ that cannot be obtained via rational blowdown surgeries. Recall that the isotopy type of a symplectic line arrangement $S$ satisfying $N_S \leq 1$ is unique. Hence, a symplectic line arrangement $S$ satisfying $N_S \leq1$ is completely determined by the number of all symplectic lines in $S$ and symplectic lines passing through a unique multi-intersection point. Let $S_{n,m}$ be a symplectic line arrangement consisting of $n$ symplectic lines (except the central complex line $\mathbb{CP}^1$) that contains $m$ symplectic lines passing through a unique multi-intersection point.

Next, we consider a Seifert $3$-manifold $Y_n$ determined by a left-hand plumbing graph in Figure~\ref{Yn} whose concave cap $K_n$ is given by a right-hand figure. Here the degrees of unlabeled vertices and strands are all $-2$.

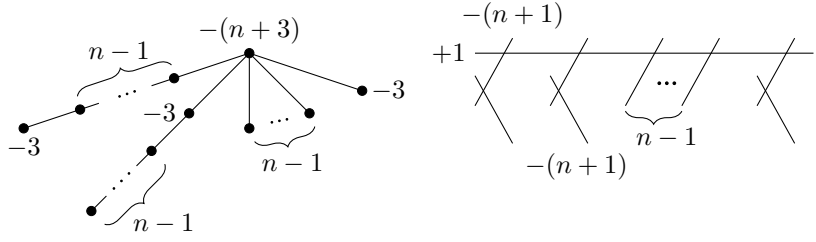
\begin{figure}[h]
\begin{tikzpicture}
\begin{scope}
\node[bullet] at (0,0){};
\node[above] at (0,0){$-(n+3)$};

\draw (0,0)--(0,-1) (0,0)--(.8,-.8) (0,0)--(1.5,-.5);
\node[right] at (1.5,-.5){$-3$};

\draw(0,0)--(-1.3,-1.3/3) (-2,-2/3)--(-3,-.5-1/3-.5/3) ;
\node[bullet] at (-1,-1/3){};
\node[bullet] at (-2.25,-.5-1/3+.25/3){};

	\draw [decorate,decoration={mirror,brace,amplitude=5pt},xshift=-1pt,yshift=5pt]
	(-1,-1/3) -- (-2.25,-.5-1/3+.25/3) node [black,midway,xshift=-2pt,yshift=11pt] 
	{$n-1$};

\node[bullet] at (-3,-.5-1/3-.5/3){};
\filldraw(-3.25/2,-3.25/6) circle (.3pt);
\filldraw(-3.25/2+.1,-3.25/6+.1/3) circle (.3pt);
\filldraw(-3.25/2-.1,-3.25/6-.1/3) circle (.3pt);

\node[below] at (-3,-.5-1/3-.5/3){$-3$};

\draw(0,0)--(-1.5,-1.5) (-1.9,-1.9)--(-2.1,-2.1);
\node[bullet] at (-.8,-.8){};
\node[left] at (-.85,-.8){$-3$};
\node[bullet] at (-1.3,-1.3){};
\node[bullet] at (-2.1,-2.1){};

	\draw [decorate,decoration={brace,amplitude=5pt},xshift=5pt,yshift=-5pt]
	(-1.3,-1.3) -- (-2.1,-2.1) node [black,midway,xshift=11pt,yshift=-11pt] 
	{$n-1$};

\filldraw(-1.6,-1.6) circle (.3pt);
\filldraw(-1.7,-1.7) circle (.3pt);
\filldraw(-1.8,-1.8) circle (.3pt);
\node[bullet] at (.8,-.8){};
\node[bullet] at (0,-1.){};
\filldraw(.3,-.92) circle (.3pt);
\filldraw(.4,-.9) circle (.3pt);
\filldraw(.5,-.88) circle (.3pt);

	\draw [decorate,decoration={mirror,brace,amplitude=5pt},xshift=2pt,yshift=-5pt]
	(0,-1) -- (.8,-.8) node [black,midway,xshift=3pt,yshift=-11pt] 
	{$n-1$};

\node[bullet] at (1.5,-.5){};
\end{scope}
\begin{scope}[shift={(3,0)}]
\draw(0,0)--(4.5,0);

\node[above] at (0.5,0.2) {$-(n+1)$};
\draw (0.5,0.2)--(0,-0.7);
\draw (1.25+.25,0.2)--(0.75+.25,-0.7);

\draw (2.25+.25,0.2)--(1.75+.25,-0.7);

\filldraw (2.55,-0.4) circle (.5pt);
\filldraw (2.45,-0.4) circle (.5pt);
\filldraw (2.65,-0.4) circle (.5pt);

\draw (3.25,0.2)--(2.75,-0.7);

\draw (4.25,0.2)--(3.75,-0.7);

\draw (0,-0.3)--(0.5,-1.2);
\draw (0.75+.25,-0.3)--(1.25+.25,-1.2);
\node[below] at (1.25+.1,-1.2) {$-(n+1)$};
\draw (3.75,-0.3)--(4.25,-1.2);

	\draw [decorate,decoration={brace,amplitude=5pt},xshift=0pt,yshift=-2pt]
	(2.75,-0.7) -- (1.75+.25,-0.7) node [black,midway,xshift=5pt,yshift=-10pt] 
	{$n-1$};

%\draw (1.75,-1.8)--(2.25,-2.7);
%\draw (2.75+.25,-1.8)--(3.25+.25,-2.7);
\node[left] at (0,0) {$+1$};

\end{scope}
\end{tikzpicture}
\caption{Plumbing graph of $Y_n$ and its concave cap $K_n$}
\label{Yn}
\end{figure}

Let $W_n$ be a minimal symplectic filling of $(Y_n,\xi_{\text{can}})$ corresponding to a curve configuration $C_n$ obtained from a symplectic line arrangement $S_{n+2,n+1}$, as it follows.
We first obtain the configuration $S'_{n+2,n+1}$ in Figure~\ref{blowingup1} by blowing up all intersection points between symplectic lines in  $S_{n+2,n+1}$. 
Since there is no arm in $K$ starting with a $(-1)$ strand, unlike with the proof of Theorem~\ref{thm2}, we can blow up at an exceptional curve $e$ to obtain a curve configuration for $Y_n$. We blow up all intersection points of $e$ except one with the second arm of $S'_{n+2,n+1}$ to obtain $C^2_2$ in $K$. Then, we blow up an intersection point between $e_1$ and the second arm of $S'_{n+2,n+1}$ for $C^1_2$ of $K$ and blow up an intersection point between $e_{n+1}$ and the first arm of $S'_{n+2,n+1}$ for $C^{n+2}_2$ of $K$, resulting in the curve configuration $C_n$ in Figure~\ref{blowingup1}. Note that we do not illustrate the proper transforms of $e_2,\dots, e_n$ in the curve configuration $C_n$ for convenience.

\begin{figure}[h]
\begin{tikzpicture}[scale=0.95]
\begin{scope}[scale=1.3]
%\filldraw (1.3-.4,0.4) circle (.4pt);
%\filldraw (1.4-.4,0.4) circle (.4pt);
%\filldraw (1.5-.4,0.4) circle (.4pt);

\filldraw (1.3,0.4) circle (.4pt);
\filldraw (1.4,0.4) circle (.4pt);
\filldraw (1.5,0.4) circle (.4pt);

\draw (0,0.2)--(2,0.2);
\draw (0.4,0.5)--(0.4,-1.6);

\draw (0.8,0.5)--(0.8,-0.6);
\draw (1.2,0.5)--(1.2,-0.8);
\draw (1.6,0.5)--(1.6,-1);

\draw[red, thick, dashdotted] (0.6,0)--(1.8,0);
\node[left, red] at (2.2,0) {$e$};
\draw[red, thick, dashdotted](0.3,-0.6)--(0.9,-0.3);
\draw[red, thick, dashdotted](0.3,-1)--(1.3,-0.5);
\draw[red, thick, dashdotted](0.3,-1.4)--(1.7,-0.7);
\node[left, red] at (0.3,-0.6) {$e_1$};
\node[left, red] at (0.3,-1) {$e_{2}$};
\node[left, red] at (0.3,-1.4) {$e_{n+1}$};

%\filldraw (0.7-.15,-.95+.35) circle (.4pt);
%\filldraw (0.75-.15,-1.+.35) circle (.4pt);
%\filldraw (0.8-.15,-1.05+.35) circle (.4pt);

\node at (1,-2) {$S'_{n+2,n+1}$};

\filldraw (0.8,-.95+.05) circle (.4pt);
\filldraw (0.85,-1.+.05) circle (.4pt);
\filldraw (0.9,-1.05+.05) circle (.4pt);
\node[above] at (0.25,0.5) {$-n$};
\node[above] at (0.8,0.5) {$-1$};
\node[above] at (1.2,0.5) {$-1$};
\node[above] at (1.6,0.5) {$-1$};
\node[left] at (0,0.2) {$+1$};

\filldraw (1.2,0.) circle (1pt);
\filldraw (1.6,0.) circle (1pt);

\filldraw (.4,-1.35) circle (1pt);
\filldraw (.8,-0.35) circle (1pt);

\draw[->,very thick] (2.25,-.5)--(2.75,-.5);

\end{scope}
\begin{scope}
\end{scope}

\begin{scope}[shift={(4.5,0)}]
\draw(0,0)--(4.5,0);

\node[above] at (0.5,0.2) {$-(n+1)$};
\draw (0.5,0.2)--(0,-0.7)--(-.5-.1,-1.6-.1*9/5);

\draw[red, thick, dashdotted](-.7,-1.5-.15)--(4.35,-1.05-.225);
\draw (1.25+.25,0.2)--(0.75+.25,-0.7);
\draw[red, thick, dashdotted](0.05,-0.7-.15)--(1.5,-.1);

\draw (2.25+.25,0.2)--(1.75+.25+.05,-0.7+.05*9/5);

\filldraw (2.5,-0.25) circle (.5pt);
\filldraw (2.6,-0.25) circle (.5pt);
\filldraw (2.7,-0.25) circle (.5pt);

\draw (3.25,0.2)--(2.75,-0.7);

\draw (4.25,0.2)--(3.75-.1,-0.7-.1*9/5);

\draw (0,-0.3-.1-.1)--(0.5,-1.2-.1-.1);
\draw (0.75+.25,-0.3-.1-.1)--(1.25+.25-.1,-1.2-.1-.1+.1*9/5);

\draw[red, thick, dashdotted](1.04,-.75)--(2.3,-.4);
\draw[red, thick, dashdotted](1.09,-1.)--(3,-.55);
\draw[red, thick, dashdotted](1.17,-1.2)--(3.8,-.75);

\draw (3.75,-0.3-.1-.1)--(4.25,-1.2-.1-.1);

	\draw [decorate,decoration={brace,amplitude=5pt},xshift=0pt,yshift=2pt]
	(2.5,0.2) -- (3.25,0.2) node [black,midway,xshift=0pt,yshift=10pt] 
	{$n-1$};

%\draw (1.75,-1.8)--(2.25,-2.7);
%\draw (2.75+.25,-1.8)--(3.25+.25,-2.7);

\node[left] at (0,0) {$+1$};
\node at (2,-2*13/10) {$C_n$};

\draw[red, thick, dashdotted](0.05,-0.7-.15)--(1.5,-.1);
\draw[red, thick, dashdotted](-.7,-1.5-.15)--(4.35,-1.05-.225);

\end{scope}
\end{tikzpicture}
\caption{Curve configuration $C_n$ obtained from $S_{n+2,n+1}$}
\label{blowingup1}
\end{figure}
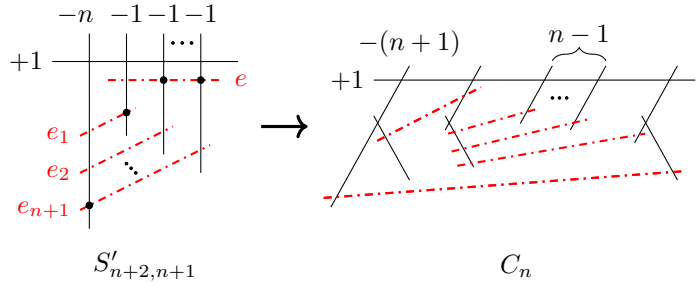

\begin{thm}
\label{thm3}
For each $n\geq 3$, the minimal symplectic filling $W_n$ of $(Y_n,\xi_{\text{can}})$ cannot be obtained by a sequence of rational blowdowns from the minimal resolution of the corresponding weighted homogeneous surface singularity.
\end{thm}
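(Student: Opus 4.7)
Plan. I argue by contradiction: suppose $W_n$ arises from the minimal resolution $W_{\min}$ of the corresponding weighted homogeneous surface singularity by a sequence of rational blowdowns $W_{\min} = W_0 \to W_1 \to \cdots \to W_k = W_n$. By Proposition~\ref{mainthm1}, each intermediate minimal symplectic filling has $N_{S_i}\leq 1$, and the isotopy type of its symplectic line arrangement changes between consecutive steps only as described in the proof of Proposition~\ref{mainthm1} and in Figure~\ref{possibleline}. Since $S_{\min}$ is the left-hand arrangement in Figure~\ref{possibleline} (all $n+2$ symplectic lines through a single point) while $S_n = S_{n+2,n+1}$ has one line separated from the pencil, there must be a unique ``separation step'' $j$ at which the line arrangement changes from having all $n+2$ lines through the pencil point to having exactly $n+1$ such lines plus one separated line. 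Let $G \subset W_j$ be the star-shaped negative-definite plumbing that is rationally blown down at this step.

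I then invoke the structural information on $G$ extracted from the proof of Proposition~\ref{mainthm1}: $G$ is either linear or of the form $\Gamma_{p,q,r}$, and its concave cap $K_G$ takes one of the two forms in Figure~\ref{KG}. In either case $K_G$ has at most three essential arms, so that only at most three of the $n+2$ symplectic lines participate in the separation step as the sub-configuration $T$. The homology classes of the irreducible components of $K_n$ in $M_n = W_n \cup K_n$ can be read off from the blow-up description of $C_n$ in Figure~\ref{blowingup1}, and they encode the intersection pattern of the separated line with the $n+1$ lines through the pencil point. In particular, the three $2$-component essential arms of weights $(-(n+1), -2)$ in $K_n$ contain specific multiplicities of exceptional classes that must be produced by blow-ups occurring at or before the separation step.

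The heart of the argument is to show that the homological constraints imposed by the separation step are incompatible with the constraints dictated by $C_n$. Concretely, I would compare the multiplicities of the exceptional classes $e_i$ appearing in the three $(-(n+1), -2)$ essential arms of $K_n$ with those that can be produced by the rational blowdown of $G$ via $K_G$, while noting that the $n-1$ trivial essential arms (single $-2$'s) in $K_n$ cannot absorb any of the ``separation'' exceptional classes. Because our case corresponds to the boundary $b = n+1$ of Theorem~\ref{thm2}, the cap $K_n$ has no single $-1$ arms -- precisely the ``slack'' exploited in the proof of Theorem~\ref{thm2}. Without such slack, the separation step is forced to account for all $n+1$ exceptional classes arising from the transverse intersections of the separated line with the remaining lines, which is impossible once $G$ (and hence $K_G$) has at most three arms.

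The main obstacle is making this last incompatibility precise for every admissible $G$. This requires, for each of the two shapes in Figure~\ref{KG}, a careful case analysis of how the arms of $K_G$ map into $T$ and of how the resulting exceptional classes contribute to the homological embedding of $K_n$ in $M_n$ determined by $C_n$; only after ruling out every such configuration do we obtain the desired contradiction.
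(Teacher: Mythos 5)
Your overall strategy---contradiction via tracking the symplectic line arrangements through the sequence of blowdowns---starts from the same place as the paper (your first step is exactly Lemma~\ref{counterexamplelem1}), but the step you choose to attack is the wrong one, and this creates a genuine gap. You isolate the unique ``separation step'' $W_j \to W_{j+1}$ at which the arrangement changes from $S_{n+2,n+2}$ to $S_{n+2,n+1}$ and hope to contradict it using the homological data of $C_n$. Two problems. First, $W_{j+1}$ need not be $W_n$: further rational blowdowns (which preserve the arrangement $S_{n+2,n+1}$) may follow, each of which further alters the homological embedding of $K_n$, so the multiplicities you read off from Figure~\ref{blowingup1} cannot be imposed on the separation step without an additional, nontrivial argument controlling everything that happens afterwards. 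Second, the incompatibility you propose---that $K_G$ has at most three arms and hence cannot ``account for'' the $n+1$ intersections of the separated line---is not a valid obstruction: those $n+1$ transverse double points are ordinary intersection points of the arrangement, produced by simple blow-ups having nothing to do with $G$, and Proposition~\ref{mainthm1} together with Lemma~\ref{counterexamplelem1} shows that separation steps with a small $G$ occur perfectly well in general (indeed, for $b\geq n+2$ Theorem~\ref{thm2} realizes many fillings whose arrangement is of the right-hand type in Figure~\ref{possibleline} by rational blowdowns). So no contradiction can come from the separation step per se; it must exploit the specific filling $W_n$.

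The paper instead analyzes only the \emph{last} step: $W_n$ is obtained from some $W'_n$ by a single rational blowdown of $G_n$, and $W'_n$ has arrangement $S_{n+2,n+2}$ or $S_{n+2,n+1}$. In the first case the only admissible curve configuration is the standard one (the minimal resolution), and a single blowdown from it would leave some $(-1)$-class appearing in only one arm of $K_n$, whereas in $C_n$ every $(-1)$-class appears in at least two arms. In the second case---which your separation-step framing does not address at all, since there the last step does not change the arrangement---one shows that either $G_n$ cannot be symplectically embedded starting from the exceptional curve $e$ (because $e$ meets a single $(-2)$ arm of $K_n$ that cannot be blown off it), or else the curve configuration of $W'_n$ is forced to coincide with $C_n$, i.e.\ $W'_n = W_n$. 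You would need an argument of this kind, tied to the final surgery and to the precise homological data of $C_n$; as written, the decisive incompatibility is left as an acknowledged ``obstacle,'' so the proof is incomplete even on its own terms.
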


%To prove Theorem~\ref{thm3}, we give a necessary criterion for a minimal symplectic filling to be obtained by a sequence of rational blowdowns.

To prove Theorem~\ref{thm3}, we first observe the effect on symplectic line arrangements under a single rational blowdown surgery.

\begin{lem}
\label{counterexamplelem1}
Assume that a minimal symplectic filling $W$ is obtained from $W'$ by rationally blowing down 
$G\subset W'$. If a symplectic line arrangement corresponding to $W'$ is $S_{n,m}$, then a symplectic line arrangement corresponding to $W$ is either $S_{n,m}$ or $S_{n,m-1}$. 
\end{lem}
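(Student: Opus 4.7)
The plan is to combine Proposition~\ref{mainthm1} with a case split on the position of the unique intersection point $p'$ of $T' \subset S'$ relative to the multi-intersection point $p'_*$ of $S' = S_{n, m}$. The argument goes through once one identifies $T'$ with the set of lines of $S'$ passing through $p'$.

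I would first observe that $S$ and $S'$ have the same number of symplectic lines (namely $n$), since the concave cap $K$ and its arms are unaffected by rationally blowing down $G \subset W'$. Combined with Proposition~\ref{mainthm1}, which yields $N_S \leq N_{S'} \leq 1$, this forces $S = S_{n, m'}$ for some $m' \geq 0$; what remains is to verify $m' \in \{m, m-1\}$. We may assume $m \geq 3$, so that $S'$ has a genuine multi-intersection point $p'_*$. Let $K_{T'} \subset K$ be the arms connected to $G$ via $J$-holomorphic curves, and write $T' \subset S'$ and $T \subset S$ for their images. The structural facts I would invoke from the proof of Proposition~\ref{mainthm1} are: $T'$ has a unique intersection point $p'$; $T$ is one of the two configurations of Figure~\ref{possibleline}; and each intersection point of $S'$ other than $p'$ is preserved in $S$ with the same incident line set, because its representing $(-1)$ curve lies in $E \setminus E_G$. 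A further key observation, extracted from the description of the terminal exceptional curve in the proof of Proposition~\ref{prop2}, is that $T'$ coincides with the set of all lines of $S'$ through $p'$.

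Given this preparation, the case analysis is straightforward. If $p' \neq p'_*$, then $p'_*$ is preserved in $S$ together with all $m$ of its incident lines, so $S = S_{n, m}$. If $p' = p'_*$, the identification $T' = \{\text{lines through }p'_*\}$ forces $|T'| = m$, so no line of $S \setminus T$ participates in the new local structure. If $T$ is of type 1, all $m$ lines of $T$ pass through a common point, yielding $S = S_{n, m}$; if $T$ is of type 2, only $m-1$ lines pass through a common point with one line displaced, yielding $S = S_{n, m-1}$. In every case $m' \in \{m, m-1\}$, as claimed.

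The main obstacle is establishing the identification $T' = \{\text{lines through }p'\}$ in a clean homological way. One needs to trace the terminal $(-1)$ curve $e$ arising from the central component $D^0$ of $G$ through the sequence of blowdowns, confirm that its homology class appears in the first components of exactly the arms in $K_{T'}$, and then invoke the bijection between intersection points of $S'$ and $(-1)$ curves whose class occurs in multiple arms. Once this identification is secured, the remainder of the argument is routine bookkeeping.
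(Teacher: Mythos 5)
Your proposal is correct and follows essentially the same route as the paper: the paper's proof likewise reduces to whether the single exceptional curve $e$ from which $G$ is blown up corresponds to a non-multi-intersection point (in which case $S$ is unchanged) or to the unique multi-intersection point of $S_{n,m}$ (in which case the two possible configurations of Figure~\ref{possibleline} for $T$ give $S_{n,m}$ or $S_{n,m-1}$). Your additional bookkeeping --- the preservation of the line count, the identification of $T'$ with the lines through $p'$, and the use of Proposition~\ref{mainthm1} to rule out new multi-intersection points --- is consistent with, and merely makes explicit, what the paper's terser argument leaves implicit.
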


\begin{proof}
In the proof of Proposition~\ref{prop2}, we showed that $G$ is obtained by blowing-ups from a single exceptional curve $e$. If $e$ corresponds to a non-multi-intersection point, the corresponding symplectic line arrangement does not change during surgery. If $e$ corresponds to a unique multi-intersection point of $S_{n,m}$, then the symplectic line arrangement corresponding to $W$ is $S_{n,m}$ or $S_{n,m-1}$ depending on whether a symplectic line arrangement corresponding to the rational homology ball filling of $(\partial G, \xi_{\text{can}})$ with respect to $K_G$ is $S_{m,m}$ or $S_{m,m-1}$.
\end{proof}

\begin{proof}[Proof of Theorem~\ref{thm3}]
We assume that there is a sequence of rational blowdowns from the minimal resolution to $W_n$. Then there exists a minimal symplectic filling $W'_n$ of $Y_n$ such that $W_n$ is obtained from $W'_n$ by rationally blowing down $G_n\subset W'_n$. Furthermore, $W'_n$ itself is also obtained by a sequence of rational blowdowns so that  the corresponding symplectic line arrangement to $W'_n$ is $S_{n+2,n+2}$ or $S_{n+2,n+1}$, by Lemma~\ref{counterexamplelem1}.

First, we consider the curve configurations obtained from $S_{n+2,n+2}$. Note that each curve configuration obtained from $S_{n+2,n+2}$ is of type A or type B, as described in Section~\ref{4}. Because of the degrees appeared in $K_n$, we can only have curve configurations of type A for the minimal symplectic fillings of $Y_n$. Furthermore, there is only one curve configuration $\widetilde{S}_{n+2,n+2}$ of type A, standard blowing-ups of $S_{n+2,n+2}$, which corresponds to the minimal resolution of the corresponding singularity. 
In the curve configuration $\widetilde{S}_{n+2,n+2}$, each homology class of the $(-1)$ pseudo-holomorphic curves appears in only one arm of $K_n$ except for a pseudo-holomorphic curve $e$ corresponding to a unique multi-intersection point of $S_{n+2,n+2}$. Therefore, if a minimal symplectic filling $W_n$ is obtained from the minimal resolution by a single rational blowdown $G_n$, there is at least one $(-1)$ pseudo-holomorphic curve in the curve configuration $C_n$ of $W_n$ whose homology class appears in only one arm of $K_n$ unless $W_n$ is a rational homology ball filling. 
However, the homology class of every $(-1)$ pseudo-holomorphic curve in $C_n$ appears in at least two arms of $K_n$, and $W_n$ is not a rational homology ball filling unless $n=1$.

Next, we show that a curve configuration $C'_n$ of $W'_n$ cannot be obtained from $S_{n+2,n+1}$. Since we should blow all intersection points among the symplectic lines of a symplectic line arrangement to obtain a curve configuration, all curve configurations obtained from $S_{n+2,n+1}$ for minimal symplectic fillings of $Y_n$ are actually obtained from $S'_{n+2,n+1}$ by blowing-ups (Figure~\ref{blowingup1}). We can divide all curve configurations obtained from $S_{n+2,n+1}$ into two types: those with and without blowing up at an exceptional curve $e$.

We first assume that $C'_n$ is obtained from $S'_{n+2,n+1}$ without blowing up at $e$. 
Thus, the homological data of $K_n$ regarding $e$ in $C'_n$ is different from that of $K_n$ regarding $e$ in $C_n$.
Since only the homology classes of $E_{G_n}$ can change the homological data of $K_n$ for $W'_n$ under rationally blowing down $G_n\subset W'_n$, 
a symplectic embedding of $G_n$ in $W'_n$  should be obtained from $e$ (refer to the proof of Proposition~\ref{prop2}; we blow up all intersection points of $e$ to obtain a symplectic embedding of $G_n$ from $e$), and the homology classes of $e_i$'s in $S'_{n+2,n+1}$ do not belong to $E_{G_n}$. 
Here, $E_{G_n}$ denotes the set of $(-1)$ pseudo-holomorphic curves whose homology classes appear in the irreducible components of $G_n$.
Furthermore, since we blow up two $e_i$s to obtain $C_n$ from $S'_{n+2,n+1}$, our observation implies that we should also blow up the two $e_i$s to obtain $C'_n$ resulting from the configuration $S''_{n+2,n+1}$  in Figure~\ref{middle1}.
Then, the second arm of $S''_{n+2,n+1}$ becomes an arm in $K_n$ consisting of a single $(-2)$ strand in $C'_n$ because we do not blow up at intersection points of $e$ to obtain $C'_n$. This implies that there is no way of obtaining an embedding $G_n$ in $W'_n$ from  $e$ by blowing-ups because $e$ intersects the single $(-2)$ arm of $K$ in $W'_n$ so that we cannot blow it up to make $e$ disjoint from $K_n$.

\begin{figure}[h]
\begin{tikzpicture}
\begin{scope}[scale=1.35]
%\filldraw (1.3-.4,0.4) circle (.4pt);
%\filldraw (1.4-.4,0.4) circle (.4pt);
%\filldraw (1.5-.4,0.4) circle (.4pt);

\filldraw (1.3,0.4) circle (.4pt);
\filldraw (1.4,0.4) circle (.4pt);
\filldraw (1.5,0.4) circle (.4pt);

\draw (0,0.2)--(2.4,0.2);
\draw (0.4,0.5)--(0.4,-1.6);

\draw (0.8,0.5)--(0.8,-0.6);
\draw (1.2,0.5)--(1.2,-0.8);
\draw (1.6,0.5)--(1.6,-1);
\draw (2.,0.5)--(2,-1.2);

\draw (2.1,-.9)--(1.6,-1.6);
\node[right] at (2.1,-.9) {$-2$};
\draw[red, thick, dashdotted] (0.3,-1.5)--(1.8,-1.5);

\draw[red, thick, dashdotted] (0.6,0)--(2.2,0);
%\node[left, red] at (2.2,0) {$e$};

\draw[red, thick, dashdotted](0.5,-0.5)--(0.9,-0.3);
\draw (0.7,-0.5)--(0.3,-0.3);
\node[left] at (0.3,-0.3) {$-2$};
%\draw[red, thick, dashdotted](0.3,-0.6)--(0.9,-0.3);

\draw[red, thick, dashdotted](0.3,-1)--(1.3,-0.5);
\draw[red, thick, dashdotted](0.3,-1.4)--(1.7,-0.7);

%\node[left, red] at (0.3,-0.6) {$e_1$};
%\node[left, red] at (0.3,-1) {$e_{2}$};
%\node[left, red] at (0.3,-1.4) {$e_{n+1}$};

%\filldraw (0.7-.15,-.95+.35) circle (.4pt);
%\filldraw (0.75-.15,-1.+.35) circle (.4pt);
%\filldraw (0.8-.15,-1.05+.35) circle (.4pt);

\filldraw (0.8,-.95+.05) circle (.4pt);
\filldraw (0.85,-1.+.05) circle (.4pt);
\filldraw (0.9,-1.05+.05) circle (.4pt);
\node[below] at (0.4,-1.6) {$-(n+1)$};
\node[above] at (0.8,0.5) {$-2$};
\node[above] at (1.2,0.5) {$-1$};
\node[above] at (1.6,0.5) {$-1$};
\node[above] at (2,0.5) {$-1$};
\node[left] at (0,0.2) {$+1$};

%\draw[->,very thick] (2.25,-.5)--(2.75,-.5);

\end{scope}
\end{tikzpicture}
\caption{Configuration $S''_{n+2,n+1}$}
\label{middle1}
\end{figure}

Next, we assume that $C'_n$ is obtained from $S'_{n+2,n+1}$ by blowing up the intersection points on $e$. Then the proper transform of $e$ is an irreducible component of $K_n$ in $C'_n$. Hence, we should blow up at least all intersection points on $e$ except one, as we obtain a curve configuration $C_n$ from $S'_{n+2,n+1}$. Since the length of each arm in $K_n$ is at most two, we should also blow up the intersection points on $e$ exactly as before, so that the first two arms in the resulting configuration $S'''_{n+2,n+1}$ (refer to Figure~\ref{middle}) become the first and second arms of $K_n$. 
Note that we need the condition $n\geq 3$ to guarantee that the first two arms of $S'''_{n+2,n+1}$ become the first two arms of $K_n$ in $C'_n$.
Then, because of the degrees in $K_n$, we should reblow up again an exceptional strand in $S'''_{n+2,n+1}$ coming from one of $e_{i}$'s in $S'_{n+2,n+1}$ for $C^{n+2}_2$ of $K_n$ and an  exceptional strand from $e_1$ for $C^{1}_2$ of $K_n$, so that the resulting curve configuration is equivalent to $C_n$, which contradicts the assumption. 

\begin{figure}[h]
\begin{tikzpicture}[scale=0.9]
\begin{scope}[shift={(4.5,0)}]
\draw(0,0)--(4.5,0);

\node[above] at (0.5,0.2) {$-n$};
\node[above] at (1.5,0.2) {$-1$};
\node[above] at (2.5,0.2) {$-2$};
\node[above] at (3.25,0.2) {$-2$};
\node[above] at (4.25,0.2) {$-2$};

\draw (0.5,0.2)--(0,-0.7)--(-.5-.1-.4*5/9,-1.6-.1*9/5-.4);

\draw (1.25+.25,0.2)--(0.75+.25,-0.7);
\draw[red, thick, dashdotted](0.05,-0.2)--(1.5,-.2);

\draw (2.25+.25,0.2)--(1.75+.25+.05,-0.7+.05*9/5);

\filldraw (2.5,-0.25) circle (.5pt);
\filldraw (2.6,-0.25) circle (.5pt);
\filldraw (2.7,-0.25) circle (.5pt);

\draw (3.25,0.2)--(2.75,-0.7);

\draw (4.25,0.2)--(3.75-.1-.9*5/9-.4*5/9,-0.7-.1*9/5-.9-.4);

\draw[red, thick, dashdotted](-.9,-0.7-.1*9/5-.9-.2)--(3.4,-0.7-.1*9/5-.9-.2);

%\draw (0,-0.3-.1-.1)--(0.5,-1.2-.1-.1);
\draw (0.75+.25,-0.3-.1-.1)--(1.25+.25-.1,-1.2-.1-.1+.1*9/5);
\node[below] at (1.25+.25-.1,-1.2-.1-.1+.1*9/5) {$-(n+1)$};

\draw[red, thick, dashdotted](1.04,-.75)--(2.3,-.4);
\draw[red, thick, dashdotted](1.09,-1.)--(3,-.55);
\draw[red, thick, dashdotted](1.17,-1.2)--(3.8,-.75);

%\draw (3.75,-0.3-.1-.1)--(4.25,-1.2-.1-.1);

%\draw (1.75,-1.8)--(2.25,-2.7);
%\draw (2.75+.25,-1.8)--(3.25+.25,-2.7);

\node[left] at (0,0) {$+1$};

%\draw[red, thick, dashdotted](-.7,-1.5-.15)--(4.35,-1.05-.225);

\end{scope}
\end{tikzpicture}

\caption{Configuration $S'''_{n+2,n+1}$}
\label{middle}
\end{figure}
In conclusion, there is no minimal symplectic filling $W'_n$ of $Y_n$ such that $W_n$ is obtained from $W'_n$ by a single rational blowdown surgery. Hence, $W_n$ cannot be obtained by a sequence of rational blowdowns from the minimal resolution.

\end{proof}

\newpage
\appendix

\section{Minimal symplectic fillings versus Milnor fibers of weighted homogeneous surface singularities}
%\section{Existence of Milnor fibers corresponding to minimal symplectic fillings of some Seifert 3-manifolds}
\centerline{Hakho Choi, Jongil Park and Jaekwan Jeon}

\bigskip

In this appendix, we compare minimal symplectic fillings of a Seifert $3$-manifold $Y(-b; (\alpha_1, \beta_1), (\alpha_2, \beta_2),\ldots, (\alpha_n, \beta_n))$ with $b \geq n+2$ and Milnor fibers of a weighted homogeneous surface singularity $(X,0)$ corresponding to $Y$. 
As we mentioned in the Introduction, every Milnor fiber of $(X,0)$ gives a minimal symplectic filling of $Y$. Therefore, a question is whether all minimal symplectic fillings come from Milnor fibers of $(X,0)$ or not. To deal with the question, we consider special partial resolutions of $(X,0)$, so-called $P$-resolutions. The notion of $P$-resolution is originally given by Koll{\'a}r–Shepherd-Barron~\cite{KSB} to analyze the versal deformation space of a quotient surface singularity, which can also be defined for weighted homogeneous surface singularities. Topologically, a Milnor fiber corresponding to a given $P$-resolution is obtained by a sequence of blowing-ups and rational blowdowns from the minimal resolution of $(X,0)$. In many cases as well as quotient surface singularities, the sequence of blowing-ups and rational blowdowns can be interpreted as a sequence of rational blowdowns along chains of symplectic spheres~\cite{CP1}. Hence, when a minimal symplectic filling $W$ is obtained from a sequence of rational blowdowns from the minimal resolution of $(X,0)$ corresponding to $Y$, it is natural to find a $P$-resolution whose Milnor fiber is diffeomorphic to $W$.  
As the first step for this, we construct a partial resolution of $(X,0)$ such that a $\mathbb{Q}$-Gorenstein smoothing of singularities of class $T$ gives a minimal symplectic filling diffeomorphic to $W$, which was already obtained by a sequence of rational blowdowns from the minimal resolution given in Section~\ref{4}. And then, we check the ample condition to show that the partial resolution we constructed is actually a $P$-resolution. Finally,
combining our main criterion (Theorem~\ref{thm2}) for minimal symplectic fillings to be obtained from a sequence of rational blowdowns, we get the following result.

\begin{thm}
For a Seifert $3$-manifold $Y(-b; (\alpha_1, \beta_1), (\alpha_2, \beta_2),\ldots, (\alpha_n, \beta_n))$ with $b\geq n+2$, any minimal symplectic filling $W$ of $Y$ with $N_S\leq 1$ is realized as a Milnor fiber of some $P$-resolution of $(X,0)$, a weighted homogeneous surface singularity corresponding to $Y$.
\label{a}
\end{thm}

Note that,  if $b\geq n+3$, every minimal symplectic filling satisfies automatically $N_S\leq 1$. Hence, as a corollary, we easily get

\begin{cor}
For a Seifert $3$-manifold $Y(-b; (\alpha_1, \beta_1), (\alpha_2, \beta_2),\ldots, (\alpha_n, \beta_n))$ with $b\geq n+3$,
every minimal symplectic filling $W$ of $Y$ is realized as a Milnor fiber of some $P$-resolution of $(X,0)$.
\end{cor}

Before we prove Theorem~\ref{a} above, we briefly review the notion of $P$-resolution.
\begin{defn}
%A normal surface singularity is \emph{of class $T$} if it is a quotient surface singularity and it admits a $\mathbb{Q}$-Gorenstein one-parameter smoothing. Equivalently, it is a rational double point singularity or a cyclic quotient surface singularity of type $\frac{1}{dn^2}(1, dna-1)$ with $d \ge 1$, $n \ge 2$, $1 \le a < n$, and $(n,a)=1$.
A normal surface singularity is \emph{of class $T$} if it is a rational double point singularity or a cyclic quotient surface singularity of type $\frac{1}{dn^2}(1, dna-1)$ with $d \ge 1$, $n \ge 2$, $1 \le a < n$, and $(n,a)=1$. 
\end{defn}

Note that one-parameter $\mathbb{Q}$-Gorenstein smoothing of a singularity of class $T$ is interpreted topologically as a rational blowdown surgery. Furthermore, thanks to J. Wahl~\cite{W}, a cyclic quotient surface singularity of class $T$ can be recognized from its minimal resolution as follows:
\vspace{-5pt}
\begin{prop}
\begin{enumerate}
\item The singularities  
\begin{tikzpicture} 
\filldraw (0,0) circle (2pt); 
\draw (0,0) node[above] {$-4$}; 
\end{tikzpicture} and 
\begin{tikzpicture} 
\filldraw (0,0) circle (2pt); 
\filldraw (1,0) circle (2pt); 

\filldraw (3,0) circle (2pt); 
\filldraw (4,0) circle (2pt); 

\draw (0,0)--(1.5,0) (2.5,0)--(4,0);
\draw (2,0) node {$\dots$};
\draw (0,0) node[above] {$-3$}; 
\draw (1,0) node[above] {$-2$}; 
\draw (3,0) node[above] {$-2$}; 
\draw (4,0) node[above] {$-3$}; 

\end{tikzpicture}
are of class $T$.

\item If 
\begin{tikzpicture} 
\filldraw (0,0) circle (2pt); 
\filldraw (1,0) circle (2pt); 

\filldraw (3,0) circle (2pt); 
\filldraw (4,0) circle (2pt); 

\draw (0,0)--(1.5,0) (2.5,0)--(4,0);
\draw (2,0) node {$\dots$};
\draw (0,0) node[above] {$-b_1$}; 
\draw (1,0) node[above] {$-b_2$}; 
\draw (3,0) node[above] {$-b_{r-1}$}; 
\draw (4,0) node[above] {$-b_r$};

\end{tikzpicture}
is of class $T$, so are $$\begin{tikzpicture}[scale=1.3] 
\filldraw (0,0) circle (2pt); 
\filldraw (1,0) circle (2pt); 

\filldraw (3,0) circle (2pt); 
\filldraw (4,0) circle (2pt); 

\draw (0,0)--(1.5,0) (2.5,0)--(4,0);
\draw (2,0) node {$\dots$};
\draw (0,0) node[above] {$-2$}; 
\draw (1,0) node[above] {$-b_1$}; 
\draw (3,0) node[above] {$-b_{r-1}$}; 
\draw (4,0) node[above] {$-(b_r+1)$};

\end{tikzpicture}$$
and
$$\begin{tikzpicture} [scale=1.3]
\filldraw (0,0) circle (2pt); 
\filldraw (1,0) circle (2pt); 

\filldraw (3,0) circle (2pt); 
\filldraw (4,0) circle (2pt); 

\draw (0,0)--(1.5,0) (2.5,0)--(4,0);
\draw (2,0) node {$\dots$};
\draw (0,0) node[above] {$-(b_1+1)$}; 
\draw (1,0) node[above] {$-b_2$}; 
\draw (3,0) node[above] {$-b_{r}$}; 
\draw (4,0) node[above] {$-2$};

\end{tikzpicture}$$
\item Every singularity of class $T$ that is not a rational double point can be obtained directly from one of the singularities described in (1) and by iterating through the steps described in (2) above.
\end{enumerate}
\label{inductiveofsingT}
\end{prop}
%\begin{defn}
%Let \begin{tikzpicture} 
%\filldraw (0,0) circle (2pt); 
%\filldraw (1,0) circle (2pt); 

%\filldraw (3,0) circle (2pt); 
%\filldraw (4,0) circle (2pt); 

%\draw (0,0)--(1.5,0) (2.5,0)--(4,0);
%\draw (2,0) node {$\dots$};
%\draw (0,0) node[above] {$-b_1$}; 
%\draw (1,0) node[above] {$-b_2$}; 
%\draw (3,0) node[above] {$-b_{r-1}$}; 
%\draw (4,0) node[above] {$-b_r$};

%\end{tikzpicture}
%be the exceptional curves of a Wahl singularity. Than the graph must be constructed inductively from 
%\begin{tikzpicture} 
%\filldraw (0,0) circle (2pt); 
%\draw (0,0) node[above] {$-4$}; 
%\end{tikzpicture}
%and hence there is a $(-b_t)$-curve comes from the $(-4)$-curve. We say the curve as the \emph{initial curve}.
%\end{defn}
\begin{defn}
A \emph{$P$-resolution} $f: (Z,E)\rightarrow (X,0)$ of a weighted homogeneous surface singularity $(X,0)$ is a partial resolution such that $Z$ has at most rational double points or singularities of class $T$ and $K_Z$ is ample relative to $f$.
\end{defn}

 We usually describe a $P$-resolution $Z\rightarrow X$ as the minimal resolution $\pi:\widetilde{Z}\rightarrow Z$ of $Z$ with $\pi$-exceptional divisors. Note that the ample condition in the definition of a $P$-resolution is equivalent to the discrepancy condition on each $(-1)$ curve on $\widetilde{Z}$: Every $(-1)$ curve on $\widetilde{Z}$ must intersect two curves $E_1$ and $E_2$, which are exceptional for singularities of class $T$ on $Z$, so that the sum of the $k_i$ coefficients of $E_i$ in the canonical divisor $K_{\widetilde{Z}}$ must be less than $-1$. 
 
%Note that a Milnor fiber $M$ corresponding to a $P$-resolution is given by $\mathbb{Q}$-Gorenstein smoothings of singularities of class $T$ in $Z$. Since $\mathbb{Q}$-Gorenstein smoothings correspond to rational blowdown surgeries, we can say $M$ is obtained from the minimal resolution by blowing-ups and rational blowdowns topologically. On the other hand, as we saw in Section ~\ref{4}, there is a sequence of rational blowdowns from the minimal resolution of $X$ to $W$ if $N_S\leq 1$. 
\vspace{10pt}

 Now we are ready to prove Theorem~\ref{a}. As the first step, we construct a partial resolution of $(X, 0)$ corresponding to a minimal symplectic filling $W$ obtained by a sequence of rational blowdowns from the minimal resolution of $(X, 0)$.

\begin{prop}
Let $Y$ be a Seifert $3$-manifold $Y(-b; (\alpha_1, \beta_1), \ldots, (\alpha_n, \beta_n))$ with $b\geq n+2$ and $(X,0)$ be a weighted homogeneous singularity corresponding to $Y$.
Then, for a minimal symplectic filling $W$ with $N_S\leq 1$, there is a partial resolution $f: (Z,E)\rightarrow (X,0)$ with only rational double points or singularities of class $T$ such that a Milnor fiber of the $\mathbb{Q}$-Gorenstein smoothing of $(Z, E)$ is diffeomorphic to $W$.
\label{partial}
\end{prop}
\vspace{-17pt}
\begin{proof}
Recall that we divide curve configurations corresponding to minimal symplectic fillings of $Y$ with $N_S\leq 1$ into the following three types in Section ~\ref{4}.
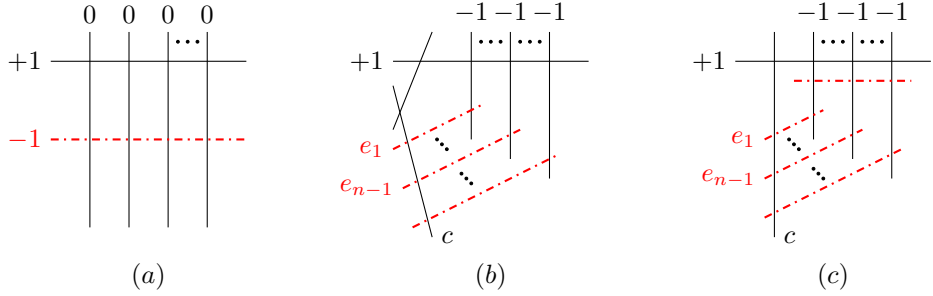
\begin{figure}[h]
\begin{center}
\begin{tikzpicture}[scale=1.3]
\begin{scope}[shift={(-3.5,0)}]

\draw (0,0.2)--(2,0.2);
\draw (0.4,0.5)--(0.4,-1.5);
\draw (0.8,0.5)--(0.8,-1.5);
\draw (1.2,0.5)--(1.2,-1.5);
\draw (1.6,0.5)--(1.6,-1.5);
\draw[red, thick, dashdotted] (0,-0.6)--(2,-0.6);
\node[left,red] at (0,-0.6) {$-1$};
\node[left] at (0,0.2) {$+1$};
\node at (1,-2) {$(a)$};

%\node at (1.4,0.35) {$\cdots$};
\filldraw (1.3,0.4) circle (.4pt);
\filldraw (1.4,0.4) circle (.4pt);
\filldraw (1.5,0.4) circle (.4pt);
\node[above] at (0.4,0.5) {$0$};
\node[above] at (0.8,0.5) {$0$};
\node[above] at (1.2,0.5) {$0$};
\node[above] at (1.6,0.5) {$0$};
\end{scope}

\begin{scope}
\node at (1,-2) {$(b)$};

\draw (0,0.2)--(2,0.2);
\draw (0.4,0.5)--(0,-0.5);
\draw (0,-0.05)--(0.4,-1.6);
\node[right] at (0.4,-1.6) {$c$};
\draw (0.8,0.5)--(0.8,-0.6);
\draw (1.2,0.5)--(1.2,-0.8);
\draw (1.6,0.5)--(1.6,-1);

\draw[red, thick, dashdotted](0,-0.7)--(0.9,-0.25);
\draw[red, thick, dashdotted](0.1,-1.1)--(1.3,-0.5);
\draw[red, thick, dashdotted](0.2,-1.5)--(1.7,-0.75);

\node[left, red] at (0,-0.7) {$e_1$};
\node[left, red] at (0.1,-1.1) {$e_{n-1}$};

\filldraw (1.3-.4,0.4) circle (.4pt);
\filldraw (1.4-.4,0.4) circle (.4pt);
\filldraw (1.5-.4,0.4) circle (.4pt);

\filldraw (1.3,0.4) circle (.4pt);
\filldraw (1.4,0.4) circle (.4pt);
\filldraw (1.5,0.4) circle (.4pt);
\node[above] at (0.8,0.5) {$-1$};
\node[above] at (1.2,0.5) {$-1$};
\node[above] at (1.6,0.5) {$-1$};
\node[left] at (0,0.2) {$+1$};

\filldraw (0.7-.25,-.95+.35) circle (.4pt);
\filldraw (0.75-.25,-1.+.35) circle (.4pt);
\filldraw (0.8-.25,-1.05+.35) circle (.4pt);

\filldraw (0.7,-.95) circle (.4pt);
\filldraw (0.75,-1.) circle (.4pt);
\filldraw (0.8,-1.05) circle (.4pt);

%\node[above left] at (0.95,-1.225) {$\ddots$};
\end{scope}

\begin{scope}[shift={(3.5,0)}]

\filldraw (1.3-.4,0.4) circle (.4pt);
\filldraw (1.4-.4,0.4) circle (.4pt);
\filldraw (1.5-.4,0.4) circle (.4pt);

\filldraw (1.3,0.4) circle (.4pt);
\filldraw (1.4,0.4) circle (.4pt);
\filldraw (1.5,0.4) circle (.4pt);
\node at (1,-2) {$(c)$};

\draw (0,0.2)--(2,0.2);
\draw (0.4,0.5)--(0.4,-1.6);
\node[right] at (0.4,-1.6) {$c$};

\draw (0.8,0.5)--(0.8,-0.6);
\draw (1.2,0.5)--(1.2,-0.8);
\draw (1.6,0.5)--(1.6,-1);
\draw[red, thick, dashdotted] (0.6,0)--(1.8,0);
\draw[red, thick, dashdotted](0.3,-0.6)--(0.9,-0.3);
\draw[red, thick, dashdotted](0.3,-1)--(1.3,-0.5);
\draw[red, thick, dashdotted](0.3,-1.4)--(1.7,-0.7);
\node[left, red] at (0.3,-0.6) {$e_1$};
\node[left, red] at (0.3,-1) {$e_{n-1}$};

\filldraw (0.7-.15,-.95+.35) circle (.4pt);
\filldraw (0.75-.15,-1.+.35) circle (.4pt);
\filldraw (0.8-.15,-1.05+.35) circle (.4pt);

\filldraw (0.8,-.95+.05) circle (.4pt);
\filldraw (0.85,-1.+.05) circle (.4pt);
\filldraw (0.9,-1.05+.05) circle (.4pt);

\node[above] at (0.8,0.5) {$-1$};
\node[above] at (1.2,0.5) {$-1$};
\node[above] at (1.6,0.5) {$-1$};
\node[left] at (0,0.2) {$+1$};

\end{scope}
\end{tikzpicture}
\end{center}
\caption{Three configurations}
\label{startingpositionappendix}
\end{figure} 

\begin{itemize}
\item{Type A:}
Curve configurations obtained from $(a)$ in Figure~\ref{startingpositionappendix} without blowing up the exceptional strand.
\item{Type B:}
Curve configurations obtained from $(b)$ or $(c)$ in Figure~\ref{startingpositionappendix} by blowing up at most one $e_i$ $(1\leq i \leq n-1)$.
\item{Type C:}
Curve configurations obtained from $(b)$ or $(c)$ in Figure~\ref{startingpositionappendix} by blowing up at least two  $e_i$s $(1\leq i \leq n-1)$.
\end{itemize}
From the proof for type A and type B in Section~\ref{4}, we know that each minimal symplectic filling $W$ of type A or type B is obtained from the minimal resolution by replacing disjoint linear subgraphs of $\Gamma$ with their minimal symplectic fillings.  Hence we can construct a partial resolution corresponding to $W$ by using an explicit one-to-one correspondence between minimal symplectic fillings and $P$-resolutions of a cyclic quotient surface singularity~\cite{PPSU}. Hence it suffices to construct a partial resolution corresponding to a minimal symplectic filling $W$ of type C.

In order to construct such a partial resolution, we start with a 3-legged case which can be generalized to the multi-legged case. Recall that we find another minimal symplectic filling $\widetilde{W}$ of $Y$ such that $W$ is obtained from $\widetilde{W}$ by a sequence of rational blowdowns and $\widetilde{W}$ itself is obtained from the minimal resolution $\Gamma$ by a sequence of rational blowdowns. More precisely, $\widetilde{W}$ is obtained by replacing a linear subgraph consisting of two arms in $\Gamma$ together with the central vertex with its minimal symplectic filling while $W$ is obtained by replacing a symplectic embedding of a linear chain $L$ in $\widetilde{W}$ with its minimal symplectic filling (For more details, refer to Section 4 in \cite{CP2}). Type C is different from other types in a sense that $L$ is not anymore a linear subgraph in $\Gamma$.  In order to find such $L$ explicitly from the resolution graph $\Gamma$, we blow up intersection points of the central vertex as follows: Let $C$ be a curve configuration corresponding to $W$. As we saw in the proof of Theorem~\ref{thm2} for type C, we have an intermediate configuration $C'$ obtained from (b) or (c) of Figure~\ref{startingpositionappendix} by blowing-ups as in Figure~\ref{caseappendix}  before we get a curve configuration $C$.
\begin{figure}[h]
\begin{center}
\begin{tikzpicture}[xscale=1.1,yscale=.75]
%\begin{tikzpicture}[xscale=1.1,yscale=.9]
\begin{scope}
\draw(0,0)--(3.5,0);
\draw (0.5,0.2)--(0,-0.7);
\draw (1.8,0.2)--(1.3,-0.7);
\draw (3.2,0.2)--(2.7,-2);

\draw (0,-0.3)--(0.5,-1.2);
\draw (1.3,-0.3)--(1.8,-1.2);
%\draw (2,-0.3)--(2.5,-1.2);

\node at (0.25, -1.2) {$\vdots$};
%\node at (2.25, -1.2) {$\vdots$};

\draw (0.5,-1.6)--(0.,-2.5);
%\draw (2.5,-1.6)--(2.,-2.5);

\draw (0.,-2.)--(0.5,-2.9);
%\draw (2.,-2.)--(2.5,-2.9);

\draw (0.5,-3.3)--(0,-4.2);

%\draw[red] (.2,-1.7)--(3.,-1.8);
\draw[red,thick, dashdotted] (.2,-1.7)--(1.35,-1.74);
\draw[red,thick, dashdotted] (3,-1.8)--(1.75,-1.76);
\node[red,below] at (3,-1.8){$e_2$};

\draw (1.8,-3.3)--(1.3,-4.2);

%\draw[red,thick, dashdotted] (0.2,-3.4)--(1.8,-3.7);
%\node[red,below] at (1.8,-3.7){$e$};
\draw (0,-3.8)--(0.5,-4.7);
\draw (1.3,-3.8)--(1.8,-4.7);

\node at (0.25, -4.7) {$\vdots$};
\node at (1.55, -4.7) {$\vdots$};

\draw (0,-5.1)--(0.5,-6);
\draw (1.3,-5.1)--(1.8,-6);
%\draw (2,-5.1)--(2.5,-6);

%\node at (1.25,-2.15) {$\vdots$};
\filldraw (1.55,-1.65) circle (0.25pt);
\filldraw (1.55,-1.75) circle (0.25pt);
\filldraw (1.55,-1.85) circle (0.25pt);
\filldraw (1.55,-1.95) circle (0.25pt);
\filldraw (1.55,-2.05) circle (0.25pt);
\filldraw (1.55,-2.15) circle (0.25pt);
\filldraw (1.55,-2.25) circle (0.25pt);
\filldraw (1.55,-2.35) circle (0.25pt);
\filldraw (1.55,-2.45) circle (0.25pt);
\filldraw (1.55,-2.55) circle (0.25pt);
\filldraw (1.55,-2.65) circle (0.25pt);
\filldraw (1.55,-2.75) circle (0.25pt);

%\filldraw (2.25,-3.6) circle (0.25pt);
%\filldraw (2.25,-3.5) circle (0.25pt);

%\filldraw (2.25,-3.7) circle (0.25pt);
%\filldraw (2.25,-3.8) circle (0.25pt);
%\filldraw (2.25,-3.9) circle (0.25pt);
%\filldraw (2.25,-4) circle (0.25pt);
%\filldraw (2.25,-4.1) circle (0.25pt);
%\filldraw (2.25,-4.2) circle (0.25pt);
%\filldraw (2.25,-4.3) circle (0.25pt);
%\filldraw (2.25,-4.4) circle (0.25pt);
%\filldraw (2.25,-4.5) circle (0.25pt);
%\filldraw (2.25,-4.6) circle (0.25pt);

\node at (0.25, -2.9) {$\vdots$};
%\draw[red] (0.5,-1.75) to[out=160,in=110](-0.5,-5.5) to[out=290,in=180](1.25,-6.5) to[out=0,in=250](2.5,-5.5) to[out=70,in=300](2.4,-0.3);

\node[left] at (0,0) {$+1$};
\node[right] at (0.35, -0.25) {$-a_{11}$};
\node[right] at (1.65, -0.25) {$-a_{21}$};
\node[right] at (3.1, -0.35) {$-1$};

\node[right] at (0.35, -0.95) {$-a_{12}$};
\node[right] at (1.65, -0.95) {$-a_{22}$};
%\node[right] at (2.25, -0.75) {$-a_{32}$};

\node[right] at (0.35, -2.15) {$-a_{1l}'$};
\node[left] at (0.25, -1.65) {$C'^1_{l}$};

%\node[right] at (1.25, -2.45) {$-a_{2n_2}$};
%\node[right] at (2.25, -2.45) {$-a_{3n_3}$};

\node[right] at (0.25, -5.45) {$-a_{1n_1}$};
\node[right] at (1.55, -5.45) {$-a_{2n_2}$};
%\node[right] at (2.25, -5.45) {$-a_{3n_3}$};

\end{scope}
\begin{scope}[shift={(5.2,0)}]
\draw(0,0)--(3.5,0);
\draw (0.5,0.2)--(0,-0.7);
\draw (1.8,0.2)--(1.3,-0.7);
\draw (3.2,0.2)--(2.7,-2.5);

%\draw[red](3,-2.4)--(0.1,-0.2);
\draw[red,thick, dashdotted](3,-2.4)--(1.75,-1.44);
\draw[red,thick, dashdotted](1.35,-1.15)--(0.1,-0.2);
%\node[red,below] at (1.35,-1.15){$e$};
\node[red,below] at (3,-2.4){$e_2$};

\draw (0,-0.3)--(0.5,-1.2);
\draw (1.3,-0.3)--(1.8,-1.2);
%\draw (2,-0.3)--(2.5,-1.2);

\node at (0.25, -1.3) {$\vdots$};
\node at (1.55, -1.3) {$\vdots$};
%\node at (2.25, -1.5) {$\vdots$};

\draw (0.5,-2)--(0,-2.9);
\draw (1.8,-2)--(1.3,-2.9);
%\draw (2.5,-2)--(2.,-2.9);

\draw (0.,-2.5)--(0.5,-3.4);
\draw (1.3,-2.5)--(1.8,-3.4);
%\draw (2.,-2.5)--(2.5,-3.4);

%\draw[red,thick, dashdotted] (0.2,-2.1)--(1.8,-2.4);
%\node[red,below] at (1.8,-2.4){$e$};

\node[left] at (0,0) {$+1$};
\node[right] at (0.35, -0.25) {$-a_{11}'$};
\node[right] at (1.65, -0.25) {$-a_{21}$};
\node[right] at (3.1, -0.35) {$-1$};

%\node[right] at (0.35, -0.95) {$-a_{12}$};
\node[right] at (1.65, -0.95) {$-a_{22}$};
%\node[right] at (2.25, -0.75) {$-a_{32}$};

\draw (0,-5.1)--(0.5,-6);
\draw (1.3,-5.1)--(1.8,-6);
%\draw (2,-5.1)--(2.5,-6);

\node[right] at (0.25, -5.45) {$-a_{1n_1}$};
\node[right] at (1.55, -5.45) {$-a_{2n_2}$};
%\node[right] at (2.25, -5.45) {$-a_{3n_3}$};

\filldraw (0.25,-3.7) circle (0.25pt);
\filldraw (0.25,-3.8) circle (0.25pt);
\filldraw (0.25,-3.9) circle (0.25pt);
\filldraw (0.25,-4) circle (0.25pt);
\filldraw (0.25,-4.1) circle (0.25pt);
\filldraw (0.25,-4.2) circle (0.25pt);
\filldraw (0.25,-4.3) circle (0.25pt);
\filldraw (0.25,-4.4) circle (0.25pt);
\filldraw (0.25,-4.5) circle (0.25pt);
\filldraw (0.25,-4.6) circle (0.25pt);

\filldraw (1.55,-3.7) circle (0.25pt);
\filldraw (1.55,-3.8) circle (0.25pt);
\filldraw (1.55,-3.9) circle (0.25pt);
\filldraw (1.55,-4) circle (0.25pt);
\filldraw (1.55,-4.1) circle (0.25pt);
\filldraw (1.55,-4.2) circle (0.25pt);
\filldraw (1.55,-4.3) circle (0.25pt);
\filldraw (1.55,-4.4) circle (0.25pt);
\filldraw (1.55,-4.5) circle (0.25pt);
\filldraw (1.55,-4.6) circle (0.25pt);

%\draw[red] (0.4,-0.35) to[out=150,in=110](-0.2,-5.5) to[out=290,in=180](1.25,-6.5) to[out=0,in=250](2.5,-5.5) to[out=70,in=320](2.25,-0.3);

\end{scope}
\end{tikzpicture}
\caption{Part of configuration $C'$ obtained from (b) and (c)}
\label{caseappendix}
\end{center}
\end{figure}
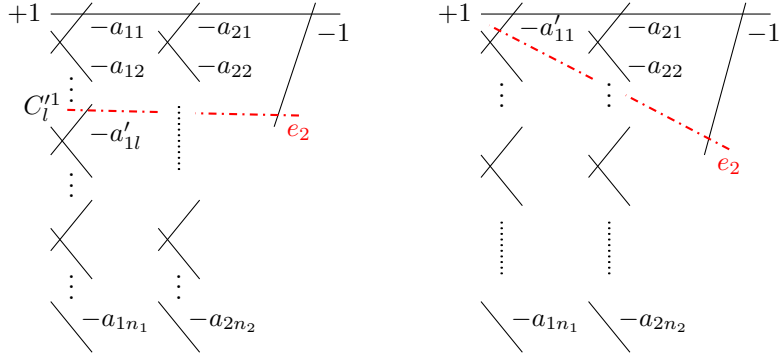
In $C'$, we have two non-trivial arms of $K$, except for one irreducible component $C'^1_l$ whose degree is $-a'_{1l}$ with $a_{1l} > a'_{1l}$, where $-a_{1l}$ is degree of the $l^{\text{th}}$ component of the first arm in $K$. To get $C$ from $C'$ by blowing-ups, we need $a_{1l}-a'_{1l}$ more blowing-ups at $C'^1_l$ to get the right degree $-a_{1l}$.  Among these blowing-ups, let $m \leq a_{1l}-a'_{1l}$ be the number of blowing-ups that occur at the intersection points of $C'^1_l$. Now we consider a plumbing graph $\Gamma_p$(refer to Figure~\ref{newgraph}) obtained from $\Gamma$ by blowing-ups at the central vertex. Let $L_h$ be a maximal horizontal subgraph of $\Gamma_p$ determined by $[-b_{1r_1},\dots, -(b+m),\dots, -b_{2r_2}]$ and $L_v$ be a vertical subgraph determined  by $ [-2,\dots,-2,-(b_{31}+1),\dots, -b_{3r_3}]$. Then we claim the following:
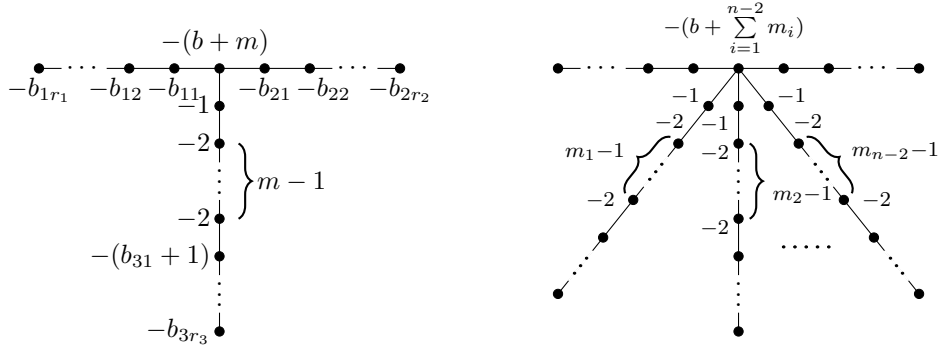
\begin{figure}
\begin{tikzpicture}[xscale=0.6, yscale=0.5]
\begin{scope}
\node[bullet] at (0,0){};

\node[bullet] at (2,0){};
\node[bullet] at (3,0){};
\node[bullet] at (4,0){};
\node[bullet] at (5,0){};
\node[bullet] at (6,0){};
\node[bullet] at (8,0){};
\node[bullet] at (4,-1){};

\node[bullet] at (4,-2){};

\node[bullet] at (4,-4){};
\node[bullet] at (4,-5){};
\node[bullet] at (4,-7){};

\node[below] at (0,0){$-b_{1r_1}$};
\node[below] at (1.7,0){$-b_{12}$};
\node[below] at (3,0){$-b_{11}$};
\node[above] at (3.9,0.1){$-(b+m)$};
\node[below] at (5,0){$-b_{21}$};
\node[below] at (6.3,0){$-b_{22}$};
\node[below] at (8,0){$-b_{2r_2}$};

\node[left] at (4,-1){$-1$};
\node[left] at (4,-2){$-2$};

\node[left] at (4,-4){$-2$};
\node[left] at (4,-5){$-(b_{31}+1)$};
\node[left] at (4,-7){$-b_{3r_3}$};

\node at (1,0){$\cdots$};
\node at (7,0){$\cdots$};
\node at (4,-2.8){$\vdots$};
\node at (4,-5.8){$\vdots$};

\draw (0,0)--(0.5,0);
\draw (1.5,0)--(6.5,0);
\draw (7.5,0)--(8,0);
\draw (4,0)--(4,-2.5);
\draw (4,-3.5)--(4,-4);
\draw (4,-4)--(4,-5.5);
\draw (4,-6.5)--(4,-7);
	\draw [thick,decorate,decoration={brace,amplitude=5pt},xshift=5pt,xshift=7pt]
	(4,-2) -- (4,-4) node [black,midway,xshift=20pt] 
	{$m-1$};
\end{scope}
\begin{scope}[shift={(11.5,0)}]
\node[bullet] at (0,0){};

\node[bullet] at (2,0){};
\node[bullet] at (3,0){};
\node[bullet] at (4,0){};
\node[bullet] at (5,0){};
\node[bullet] at (6,0){};
\node[bullet] at (8,0){};

\node[bullet] at (0,-6){};
\node[bullet] at (10/3,-1){};
%\node[bullet] at (9/3,-1.5){};
%\node[bullet] at (9/3-1/6,-1.75){};
\draw (0,-6)--(4,0);

\node[bullet] at (8/3,-2){};
\node[above] at (8/3-.2,-2+.1){\footnotesize$-2$};
\node[bullet] at (5/3,-3.5){};
\node[left] at (5/3-.2,-3.5){\footnotesize$-2$};
\draw[white, thick] (6.5/3+4/30+4/30,-2.35)--(6.5/3-4/30-4/30,-3.15);
\filldraw (6.5/3+4/30,-2.55) circle (0.75pt);
\filldraw (6.5/3,-2.75) circle (0.75pt);
\filldraw (6.5/3-4/30,-2.95) circle (0.75pt);
\node[bullet] at (3/3,-4.5){};
\draw[white, thick] (.5+4/30+4/30,-5.25+0.4)--(.5-4/30-4/30,-5.25-.4);
\filldraw (.5+4/30,-5.25+0.2) circle (0.75pt);
\filldraw (.5,-5.25) circle (0.75pt);
\filldraw (.5-4/30,-5.25-0.2) circle (0.75pt);

	\draw [thick,decorate,decoration={mirror,brace,amplitude=5pt},yshift=5pt,xshift=-5pt ]
	(8/3,-2) -- (5/3,-3.5) node [black,midway,xshift=-20pt, yshift=5pt] 
	{\footnotesize$m_1-$1};

\node[bullet] at (8,-6){};
\draw (8,-6)--(4,0);

\node[bullet] at (14/3,-1){};
\node[bullet] at (16/3,-2){};
\node[above] at (16/3+.2,-2+.1){\footnotesize$-2$};
\node[bullet] at (19/3,-3.5){};
\node[right] at (19/3+.2,-3.5){\footnotesize$-2$};
\draw[white, thick] (35/6-4/30-4/30,-2.35)--(35/6+4/30+4/30,-3.15);
\filldraw (35/6-4/30,-2.55) circle (0.75pt);
\filldraw (35/6,-2.75) circle (0.75pt);
\filldraw (35/6+4/30,-2.95) circle (0.75pt);
\node[bullet] at (7,-4.5){};
\draw[white, thick] (7.5-4/30-4/30,-5.25+0.4)--(7.5+4/30+4/30,-5.25-.4);
\filldraw (7.5-4/30,-5.25+0.2) circle (0.75pt);
\filldraw (7.5,-5.25) circle (0.75pt);
\filldraw (7.5+4/30,-5.25-0.2) circle (0.75pt);

	\draw [thick,decorate,decoration={brace,amplitude=5pt},yshift=5pt,xshift=5pt ]
	(16/3,-2) -- (19/3,-3.5) node [black,midway,xshift=25pt, yshift=5pt] 
	{\footnotesize$m_{n-2}-$1};

\node[bullet] at (4,-1){};

\node[bullet] at (4,-2){};

\node[bullet] at (4,-4){};
\node[bullet] at (4,-5){};
\node[bullet] at (4,-7){};

%\node[below] at (0,0){$-b_{1r_1}$};
%\node[below] at (1.7,0){$-b_{12}$};
%\node[below] at (3,0){$-b_{11}$};
\node[above] at (3.9,0.1){\footnotesize$-(b+\sum\limits_{i=1}^{n-2}m_i)$};
%\node[below] at (5,0){$-b_{21}$};
%\node[below] at (6.3,0){$-b_{22}$};
%\node[below] at (8,0){$-b_{2r_2}$};

\node[left] at (4,-1.5){\footnotesize$-1$};
\node[left] at (3.35,-.75){\footnotesize$-1$};
\node[right] at (4.65,-.75){\footnotesize$-1$};
\node[left] at (4,-2.25){\footnotesize$-2$};

\filldraw (5,-4.75) circle (.8pt);
\filldraw (5.25,-4.75) circle (.8pt);
\filldraw (5.5,-4.75) circle (.8pt);
\filldraw (5.75,-4.75) circle (.8pt);
\filldraw (6,-4.75) circle (.8pt);

\node[left] at (4,-4.25){\footnotesize$-2$};
%\node[left] at (4,-5){$-(b_{31}+1)$};
%\node[left] at (4,-7){$-b_{3r_3}$};

\node at (1,0){$\cdots$};
\node at (7,0){$\cdots$};
\node at (4,-2.8){$\vdots$};
\node at (4,-5.8){$\vdots$};

\draw (0,0)--(0.5,0);
\draw (1.5,0)--(6.5,0);
\draw (7.5,0)--(8,0);
\draw (4,0)--(4,-2.5);
\draw (4,-3.5)--(4,-4);
\draw (4,-4)--(4,-5.5);
\draw (4,-6.5)--(4,-7);
	\draw [thick,decorate,decoration={brace,amplitude=5pt},xshift=0pt,xshift=7pt ]
	(4,-2) -- (4,-4) node [black,midway,xshift=20pt, yshift=-5pt] 
	{\footnotesize$m_2-$1};
\end{scope}
\end{tikzpicture}

\caption{A plumbing graph $\Gamma_p$}
\label{newgraph}
\end{figure}
\begin{claim}
There exist minimal symplectic fillings $W_v$ of $L_v$ and $W_h$ of $L_h$ such that $W$ is obtained from $\widetilde{W}$ by replacing $L_v$ with $W_v$ while $\widetilde{W}$ is obtained from $\Gamma_p$ by replacing $L_h$ with $W_h$.
\label{claimappendix}
\end{claim}
\begin{proof}
First, we find a minimal symplectic filling $W_h$ of $L_h$ such that a symplectic filling of $Y$ obtained from $\Gamma_p$ by replacing $L_h$ with $W_h$ is deformation equivalent to $\widetilde{W}$. For this, we consider another Seifert $3$-manifold $Y'$ with an associated plumbing graph $\Gamma'$ and its concave cap $K'$ given in Figure~\ref{new}.
\begin{figure}[h]
\begin{tikzpicture}[scale=0.6]
\begin{scope}
\node[bullet] at (0,0){};

\node[bullet] at (2,0){};
\node[bullet] at (3,0){};
\node[bullet] at (4,0){};
\node[bullet] at (5,0){};
\node[bullet] at (6,0){};
\node[bullet] at (8,0){};
\node[bullet] at (4,-1){};

\node[bullet] at (4,-2){};

\node[bullet] at (4,-4){};

\node[below] at (0,0){$-b_{1r_1}$};
\node[below] at (1.7,0){$-b_{12}$};
\node[below] at (3,0){$-b_{11}$};
\node[above] at (3.9,0.1){$-(b+m)$};
\node[below] at (5,0){$-b_{21}$};
\node[below] at (6.3,0){$-b_{22}$};
\node[below] at (8,0){$-b_{2r_2}$};

\node[left] at (4,-1.2){$-b_{31}$};
\node[left] at (4,-2.2){$-b_{32}$};

\node[left] at (4,-4){$-b_{3r_3}$};

\node at (1,0){$\cdots$};
\node at (7,0){$\cdots$};
\node at (4,-2.8){$\vdots$};

\draw (0,0)--(0.5,0);
\draw (1.5,0)--(6.5,0);
\draw (7.5,0)--(8,0);
\draw (4,0)--(4,-2.5);
\draw (4,-3.5)--(4,-4);

\end{scope}
\begin{scope}[shift={(10,0))},scale=1.8]
\draw(0,0)--(5.2,0);
\draw (0.5,0.2)--(0,-0.7);
\draw (1.5,0.2)--(1,-0.7);
\draw (2.5,0.2)--(2,-0.7);
\draw (3.9,0.2)--(3.4,-0.7);
\draw (4.9,0.2)--(4.4,-0.7);
\node at (4.,-0.6) {$\cdots$};

	\draw [decorate,decoration={brace,amplitude=5pt,mirror},xshift=2pt,yshift=-3pt]
	(3.4,-0.7) -- (4.4,-0.7) node [black,midway,yshift=-10pt] 
	{\footnotesize $b+m-4$};

\draw (0,-0.3)--(0.5,-1.2);
\draw (1,-0.3)--(1.5,-1.2);
\draw (2,-0.3)--(2.5,-1.2);

\node at (0.25, -1.5) {$\vdots$};
\node at (1.25, -1.5) {$\vdots$};
\node at (2.25, -1.5) {$\vdots$};

\draw (0,-1.8)--(0.5,-2.7);
\draw (1,-1.8)--(1.5,-2.7);
\draw (2,-1.8)--(2.5,-2.7);

\node[left] at (0,0) {$+1$};
\node[right] at (0.25, -0.25) {\footnotesize$-a_{11}$};
\node[right] at (1.25, -0.25) {\footnotesize$-a_{21}$};
\node[right] at (2.25, -0.25) {\footnotesize$-a_{31}$};
\node[right] at (3.65, -0.25) {\footnotesize$-1$};
\node[right] at (4.65, -0.25) {\footnotesize$-1$};

\node[right] at (0.25, -0.75) {\footnotesize$-a_{12}$};
\node[right] at (1.25, -0.75) {\footnotesize$-a_{22}$};
\node[right] at (2.25, -0.75) {\footnotesize$-a_{32}$};

\node[right] at (0.25, -2.25) {\footnotesize$-a_{1n_1}$};
\node[right] at (1.25, -2.25) {\footnotesize$-a_{2n_2}$};
\node[right] at (2.25, -2.25) {\footnotesize$-a_{3n_3}$};

\end{scope}
\end{tikzpicture}
\caption{A plumbing graph $\Gamma'$ and concave cap $K'$}
\label{new}
\end{figure}
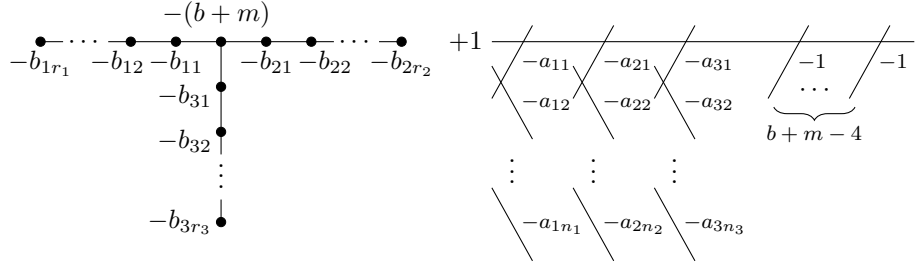 
%Clearly $K\subset K'$, hence we can think of a symplectic filling of $Y$ given by $(\Gamma' \cup K') \setminus K$. From the homological data of $K'$ in the rational surface $\Gamma' \cup K'$. 
Note that $K\subset K'$ and there is a (-1) curve connecting the central curve of $\Gamma'$ and each single $(-1)$ arm of $K'$ in the rational surface $(\Gamma' \cup K')$. Furthermore, by blowing down $m$ such $(-1)$ curves, we get $\Gamma$ together with $K$ so that a non-minimal symplectic filling $\Gamma_p$ of $Y$ is deformation equivalent to $(\Gamma' \cup K')\setminus K$ and $L_h\subset \Gamma_p$ is isotopic to maximal horizontal subgraph of $\Gamma'$ which also denoted by $L_h$.
Now we construct a desired minimal symplectic filling $W_h$ of $L_h$ using a sequence of blowing-ups from a symplectic line arrangement to $C'$. Instead of a symplectic line arrangement with $(b-1)$ lines, we start from a symplectic line arrangement with $(b+m-1)$ lines. Then, by using a sequence of blowing-ups to $C'$, we get a configuration $C''$ as in Figure~\ref{caseappendix2}. Note that $C''$ differs from $C'$ by a number of single $(-1)$ arms and degree of the $l^\text{th}$ component of the first arm. Precisely, the difference between the degrees of two components is exactly $m$ coming from $(-1)$ curves connecting the component and $m$ more single $(-1)$ arms in $C''$. 
\begin{figure}[h]
\begin{center}
\begin{tikzpicture}[xscale=1.1,yscale=.75]
%\begin{tikzpicture}[xscale=1.1,yscale=.9]
\begin{scope}
\draw(0,0)--(4.2,0);
\draw (0.5,0.2)--(0,-0.7);
\draw (1.8,0.2)--(1.3,-0.7);
\draw (3.2,0.2)--(2.7,-2);
\draw (3.9,0.2)--(3.4,-2);

	\draw [decorate,decoration={brace,amplitude=5pt},xshift=0pt,yshift=-3pt]
	(3.4,-2) -- (2.7,-2) node [black,midway,yshift=-10pt] 
	{\footnotesize $m$};

\filldraw (3.2-.15,-1.2) circle (0.25pt);
\filldraw (3.3-.15,-1.2) circle (0.25pt);
\filldraw (3.4-.15,-1.2) circle (0.25pt);
\filldraw (3.5-.15,-1.2) circle (0.25pt);
\filldraw (3.6-.15,-1.2) circle (0.25pt);

\draw (0,-0.3)--(0.5,-1.2);
\draw (1.3,-0.3)--(1.8,-1.2);
%\draw (2,-0.3)--(2.5,-1.2);

\node at (0.25, -1.2) {$\vdots$};
%\node at (2.25, -1.2) {$\vdots$};

\draw (0.5,-1.6)--(0.,-2.5);
%\draw (2.5,-1.6)--(2.,-2.5);

\draw (0.,-2.)--(0.5,-2.9);
%\draw (2.,-2.)--(2.5,-2.9);

\draw (0.5,-3.3)--(0,-4.2);

%\draw[red] (.2,-1.7)--(3.,-1.8);
\draw[red,thick, dashdotted] (.2,-1.7)--(1.35,-1.74);
\draw[red,thick, dashdotted] (3,-1.8)--(1.75,-1.76);
%\node[red,below] at (3,-1.8){$e_2$};

\draw (1.8,-3.3)--(1.3,-4.2);

%\draw[red,thick, dashdotted] (0.2,-3.4)--(1.8,-3.7);
%\node[red,below] at (1.8,-3.7){$e$};
\draw (0,-3.8)--(0.5,-4.7);
\draw (1.3,-3.8)--(1.8,-4.7);

\node at (0.25, -4.7) {$\vdots$};
\node at (1.55, -4.7) {$\vdots$};

\draw (0,-5.1)--(0.5,-6);
\draw (1.3,-5.1)--(1.8,-6);
%\draw (2,-5.1)--(2.5,-6);

%\node at (1.25,-2.15) {$\vdots$};
\filldraw (1.55,-1.55) circle (0.25pt);
\filldraw (1.55,-1.65) circle (0.25pt);
\filldraw (1.55,-1.75) circle (0.25pt);
\filldraw (1.55,-1.85) circle (0.25pt);
%\filldraw (1.55,-1.95) circle (0.25pt);
%\filldraw (1.55,-2.05) circle (0.25pt);
%\filldraw (1.55,-2.15) circle (0.25pt);
%\filldraw (1.55,-2.25) circle (0.25pt);
%\filldraw (1.55,-2.35) circle (0.25pt);
%\filldraw (1.55,-2.45) circle (0.25pt);
\filldraw (1.55,-2.55) circle (0.25pt);
\filldraw (1.55,-2.65) circle (0.25pt);
\filldraw (1.55,-2.75) circle (0.25pt);
\filldraw (1.55,-2.85) circle (0.25pt);

%\filldraw (2.25,-3.6) circle (0.25pt);
%\filldraw (2.25,-3.5) circle (0.25pt);

%\filldraw (2.25,-3.7) circle (0.25pt);
%\filldraw (2.25,-3.8) circle (0.25pt);
%\filldraw (2.25,-3.9) circle (0.25pt);
%\filldraw (2.25,-4) circle (0.25pt);
%\filldraw (2.25,-4.1) circle (0.25pt);
%\filldraw (2.25,-4.2) circle (0.25pt);
%\filldraw (2.25,-4.3) circle (0.25pt);
%\filldraw (2.25,-4.4) circle (0.25pt);
%\filldraw (2.25,-4.5) circle (0.25pt);
%\filldraw (2.25,-4.6) circle (0.25pt);

\node at (0.25, -2.9) {$\vdots$};
%\draw[red] (0.5,-1.75) to[out=160,in=110](-0.5,-5.5) to[out=290,in=180](1.25,-6.5) to[out=0,in=250](2.5,-5.5) to[out=70,in=300](2.4,-0.3);

\node[left] at (0,0) {$+1$};
\node[right] at (0.35, -0.25) {$-a_{11}$};
\node[right] at (1.65, -0.25) {$-a_{21}$};
\node[right] at (3.1, -0.35) {$-1$};
\node[right] at (3.7, -0.35) {$-1$};
\node[right] at (0.35, -0.95) {$-a_{12}$};
\node[right] at (1.65, -0.95) {$-a_{22}$};
%\node[right] at (2.25, -0.75) {$-a_{32}$};

\node[right] at (0.35, -2.25) {$-(a_{1l}'+m)$};
%\node[left] at (0.25, -1.65) {$C'^1_{n}$};

%\node[right] at (1.25, -2.45) {$-a_{2n_2}$};
%\node[right] at (2.25, -2.45) {$-a_{3n_3}$};

\node[right] at (0.25, -5.45) {$-a_{1n_1}$};
\node[right] at (1.55, -5.45) {$-a_{2n_2}$};
%\node[right] at (2.25, -5.45) {$-a_{3n_3}$};

\end{scope}
\begin{scope}[shift={(6.2,0)}]
\draw(0,0)--(4.2,0);
\draw (0.5,0.2)--(0,-0.7);
\draw (1.8,0.2)--(1.3,-0.7);
\draw (3.2,0.2)--(2.7,-2.5);
\draw (3.9,0.2)--(3.4,-2.5);

	\draw [decorate,decoration={brace,amplitude=5pt},xshift=0pt,yshift=-3pt]
	(3.4,-2.5) -- (2.7,-2.5) node [black,midway,yshift=-10pt] 
	{\footnotesize $m$};

\filldraw (3.2-.15,-1.5) circle (0.25pt);
\filldraw (3.3-.15,-1.5) circle (0.25pt);
\filldraw (3.4-.15,-1.5) circle (0.25pt);
\filldraw (3.5-.15,-1.5) circle (0.25pt);
\filldraw (3.6-.15,-1.5) circle (0.25pt);

%\draw[red](3,-2.4)--(0.1,-0.2);
\draw[red,thick, dashdotted](3,-2.4)--(1.75,-1.44);
\draw[red,thick, dashdotted](1.35,-1.15)--(0.1,-0.2);
%\node[red,below] at (1.35,-1.15){$e$};
%\node[red,below] at (3,-2.4){$e_2$};

\draw (0,-0.3)--(0.5,-1.2);
\draw (1.3,-0.3)--(1.8,-1.2);
%\draw (2,-0.3)--(2.5,-1.2);

\node at (0.25, -1.3) {$\vdots$};
\node at (1.55, -1.3) {$\vdots$};
%\node at (2.25, -1.5) {$\vdots$};

\draw (0.5,-2)--(0,-2.9);
\draw (1.8,-2)--(1.3,-2.9);
%\draw (2.5,-2)--(2.,-2.9);

\draw (0.,-2.5)--(0.5,-3.4);
\draw (1.3,-2.5)--(1.8,-3.4);
%\draw (2.,-2.5)--(2.5,-3.4);

%\draw[red,thick, dashdotted] (0.2,-2.1)--(1.8,-2.4);
%\node[red,below] at (1.8,-2.4){$e$};

\node[left] at (0,0) {$+1$};
\node[] at (-.7, -1) {$-(a_{11}'+m)$};
\node[right] at (1.65, -0.25) {$-a_{21}$};
\node[right] at (3.1, -0.35) {$-1$};
\node[right] at (3.8, -0.35) {$-1$};

%\node[right] at (0.35, -0.95) {$-a_{12}$};
\node[right] at (1.65, -0.95) {$-a_{22}$};
%\node[right] at (2.25, -0.75) {$-a_{32}$};

\draw (0,-5.1)--(0.5,-6);
\draw (1.3,-5.1)--(1.8,-6);
%\draw (2,-5.1)--(2.5,-6);

\node[right] at (0.25, -5.45) {$-a_{1n_1}$};
\node[right] at (1.55, -5.45) {$-a_{2n_2}$};
%\node[right] at (2.25, -5.45) {$-a_{3n_3}$};

\filldraw (0.25,-3.7) circle (0.25pt);
\filldraw (0.25,-3.8) circle (0.25pt);
\filldraw (0.25,-3.9) circle (0.25pt);
\filldraw (0.25,-4) circle (0.25pt);
\filldraw (0.25,-4.1) circle (0.25pt);
\filldraw (0.25,-4.2) circle (0.25pt);
\filldraw (0.25,-4.3) circle (0.25pt);
\filldraw (0.25,-4.4) circle (0.25pt);
\filldraw (0.25,-4.5) circle (0.25pt);
\filldraw (0.25,-4.6) circle (0.25pt);

\filldraw (1.55,-3.7) circle (0.25pt);
\filldraw (1.55,-3.8) circle (0.25pt);
\filldraw (1.55,-3.9) circle (0.25pt);
\filldraw (1.55,-4) circle (0.25pt);
\filldraw (1.55,-4.1) circle (0.25pt);
\filldraw (1.55,-4.2) circle (0.25pt);
\filldraw (1.55,-4.3) circle (0.25pt);
\filldraw (1.55,-4.4) circle (0.25pt);
\filldraw (1.55,-4.5) circle (0.25pt);
\filldraw (1.55,-4.6) circle (0.25pt);

%\draw[red] (0.4,-0.35) to[out=150,in=110](-0.2,-5.5) to[out=290,in=180](1.25,-6.5) to[out=0,in=250](2.5,-5.5) to[out=70,in=320](2.25,-0.3);

\end{scope}
\end{tikzpicture}
\caption{Part of intermediate configuration $C''$}
\label{caseappendix2}
\end{center}
\end{figure}
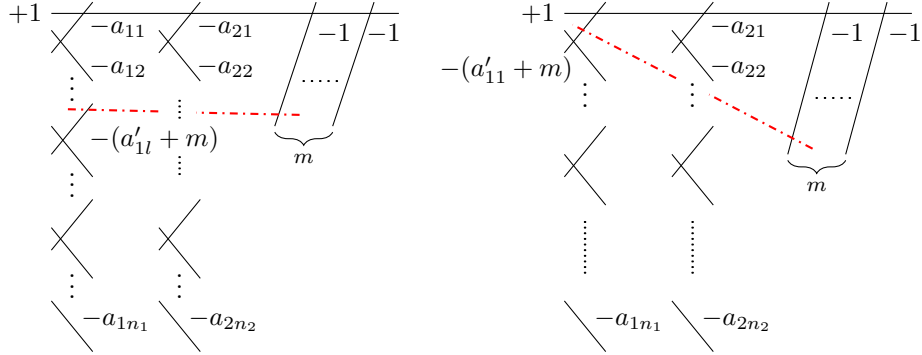
Consider a curve configuration $\widetilde{C''}$ of $Y'$ which is obtained from $C''$ by standard blowing-ups. 
Then, there exists a minimal symplectic filling $W_h$ of $L_h$ such that a minimal symplectic filling $W'$ corresponding to $\widetilde{C''}$ is deformation equivalent to $(\Gamma' \setminus L_h) \cup W_h$.
Furthermore, since the only difference between $K$ and $K'$ is the number of $(-1)$ single arms, the homological data of $K\subset K'$ in $\widetilde{C''}$ is exactly the same as that of $K$ in $\widetilde{C'}$. Therefore, $(W' \cup K') \setminus K$ is deformation equivalent to $\widetilde{W}$, so that $\widetilde{W}$ is deformation equivalent to $(\Gamma_p  \setminus L_h) \cup W_h$.

%As before, a minimal symplectic filling $W'$ corresponding to $\widetilde{C''}$ is obtained by replacing $L_h$ with its minimal symplectic filling $W_h$, implying that $W_h$ is the desired minimal symplectic filling.

It remains to show that the aforementioned linear chain $L\subset \widetilde{W}$ for $W$ is isotopic to $L_v$ in $ \widetilde{W} \cong (\Gamma_p \setminus L_h)\cup W_h $. As we saw in Section~\ref{sec-3}, any symplectically embedded linear chain in a minimal symplectic filling is obtained from an exceptional $2$-sphere by blowing-ups. Therefore, in order to show that $L$ is isotopic to $L_v$, we only need to compare their homological data in $\widetilde{C'}$ and $\widetilde{C''}$. From the proof of Lemma 4.3 in ~\cite{CP2}, we know that $L$ is obtained from $e_2$ of $C'$ by blowing-ups as in Figure~\ref{detail4.2}.  
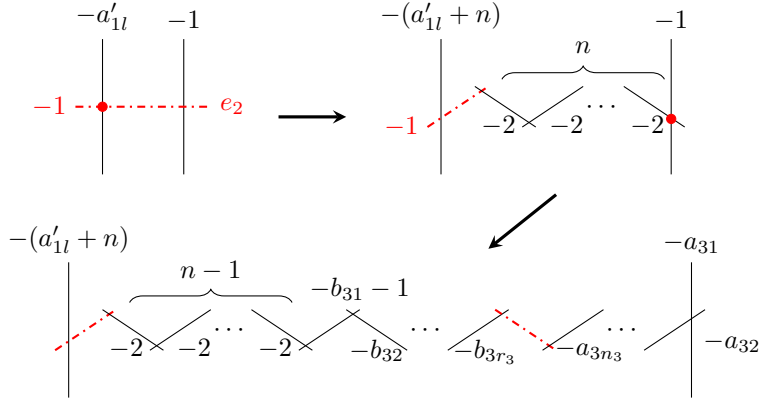
\begin{figure}[h]
\centering
\begin{tikzpicture}[scale=.9]
\begin{scope}

\draw (0.4,0.5)--(0.4,-1.5);

\draw (1.6,0.5)--(1.6,-1.5);
\draw[red,thick, dashdotted] (0,-0.5)--(2,-0.5);
\node[left,red] at (0,-0.5) {$-1$};
\node[red,right] at (2,-0.5) {$e_2$};

\node[bullet,red] at (0.4,-.5){};

\node[above] at (0.4,0.5) {$-a_{1l}'$};

\node[above] at (1.6,0.5) {$-1$};

\draw[very thick, ->, >=stealth] (3,-0.65)--(4,-0.65);
\end{scope}
\begin{scope}[shift={(5,0)}]
\draw (0.4,0.5)--(0.4,-1.5);

\draw (3.8,0.5)--(3.8,-1.5);
%\draw[red] (0,-0.5)--(2.4,-0.5);
\node[left,red] at (0.2,-0.8) {$-1$};
\draw[red,thick, dashdotted] (0.2,-0.8)--(1.1,-0.2);

\draw (0.9,-0.2)--(1.8,-0.8);
\node[below] at (1.25,-.5){$-2$};
\draw (1.6,-0.8)--(2.5,-0.2);
\node[below] at (2.25,-.5){$-2$};
\node at (2.8,-.5){$\cdots$};
\draw (3.1,-0.2)--(4,-0.8);
\node[below] at (3.45,-.5){$-2$};

\node[bullet,red] at (3.8,-0.68){};

\node[above] at (0.4,0.5) {$-(a'_{1l}+n)$};

\node[above] at (3.8,0.5) {$-1$};

\draw [decorate,decoration={brace, amplitude=5pt},xshift=0pt,yshift=3pt]
	(1.3,-0.2) -- (3.7,-.2) node [black,midway,yshift=12pt] 
	{$n$};
\draw[very thick, ->, >=stealth] (2.1,-1.8)--(1.1,-2.6);

\end{scope}
\begin{scope}[shift={(-.5,-3.3)}]
\draw (0.4,0.5)--(0.4,-1.5);

\draw (9.6,0.5)--(9.6,-1.5);
\draw[red,thick, dashdotted] (0.2,-0.8)--(1.1,-0.2);

\draw (0.9,-0.2)--(1.8,-0.8);
\node[below] at (1.25,-.5){$-2$};
\draw (1.6,-0.8)--(2.5,-0.2);
\node[below] at (2.25,-.5){$-2$};

\node at (2.8,-.5){$\cdots$};
\draw (3.1,-0.2)--(4,-0.8);
\node[below] at (3.45,-.5){$-2$};

\draw (3.8,-0.8)--(4.7,-0.2);
\node[above] at (4.7,-.2){$-b_{31}-1$};

\draw (4.5,-0.2)--(5.4,-0.8);
\node[below] at (4.95,-.5){$-b_{32}$};
\node at (5.7,-.5){$\cdots$};
\draw (6,-.8)--(6.9,-.2);
\node[below] at (6.55,-.5){$-b_{3r_3}$};

\draw[red,thick, dashdotted] (6.7,-.2)--(7.6,-.8);

\draw (7.4,-.8)--(8.3,-.2);
\node[below] at (8.1,-.5){$-a_{3n_3}$};

\node at (8.6,-.5){$\cdots$};

\draw (8.9,-.8)--(9.8,-.2);
\node[below] at (10.2,-0.4) {$-a_{32}$};

\node[above] at (0.4,0.5) {$-(a'_{1l}+n)$};

\node[above] at (9.6,0.5) {$-a_{31}$};

\draw [decorate,decoration={brace, amplitude=5pt},xshift=0pt,yshift=3pt]
	(1.3,-0.2) -- (3.7,-.2) node [black,midway,yshift=12pt] 
	{$n-1$};

\end{scope}

\end{tikzpicture}

\caption{Embedding of $L$ to $\widetilde{W}$}
\label{detail4.2}
\end{figure}
In particular, the homological data for $[-2,\dots,-2 ]$ in $L$ is given by $(-1)$ curves only intersecting $C^1_l$ of $K$ in $\widetilde{C'}$. On the other hand, the homological data of $[-2,\dots,-2]$ in $L_v$ with respect to $\Gamma_p \cong (\Gamma'\cup K')\setminus K$ is given by (-1) curves connecting the central curve of $\Gamma'$ and each single $(-1)$ arm of $K'\setminus K$.
Hence the homological data of  $[-2,\dots,-2]$ in $L_v$ with respect to $(\Gamma_p \setminus L_h)\cup W_h$ is given by $(-1)$ curves connecting $C^1_l$ of $K'$ and single $(-1)$ arms of $K'\setminus K$ in $\widetilde{C''}$, which are $(-1)$ curves only intersecting $C^1_l$ from the viewpoint of $K$. Clearly $[-b_{32},\dots, -b_{3r_3}]$ part has the same homological data, so that we are done.

\end{proof}
In summary, using explicit one-to-one correspondences between minimal symplectic fillings and $P$-resolutions of cyclic quotient surface singularities, we get a partial resolution $(Z, E)$ corresponding to $W$, whose resolution graph is obtained from $\Gamma_p$ by blowing-ups for the 3-legged case up to now. In general case, i.e.,  $\Gamma$ has more than 3-legs (refer to Figure~\ref{newgraph}), the only difference between 3-legged case and  general case is that we get a sequence of non-negative integers $(m_1,\dots, m_{n-2})$ for the rest of arms of $K$ instead of a single $m$ for the third arm of $K$ via blowing-ups from $C'$ to $C$. Hence the same argument works for general cases, showing that there is a partial resolution $(Z,E)$ corresponding  to $W$.
\end{proof}
To complete the proof of Theorem~\ref{a}, it remains to check the ample condition on $f: (Z,E)\rightarrow (X,0)$.
\begin{prop}
The partial resolution $f: (Z,E)\rightarrow (X,0)$ in Proposition ~\ref{partial} satisfies the ample condition, that is, $K_Z$ is ample relative to $f$.
\label{ample}
\end{prop}
 \begin{proof}
Recall that the ample condition is equivalent to the discrepancy condition on each $(-1)$ curve on $\widetilde{Z}$, where $\widetilde{Z}$ is the minimal resolution of the partial resolution $Z$. For a partial resolution $(Z, E)\rightarrow (X,0)$ from minimal symplectic fillings of type A or B, every $(-1)$ curve in $\widetilde{Z}$ comes from a $P$-resolution of a cyclic quotient singularity. Hence the discrepancy condition for type A and B is satisfied. 

To check the type C case, we start with a 3-legged case as before. Note that the $(-1)$ curve in $\Gamma_p$ of Figure~\ref{newgraph} becomes the only $(-1)$ curve in $\widetilde{Z}$ not coming from a $P$-resolution of a cyclic quotient singularity. From the previous construction of our partial resolution, the $(-1)$ curve in $\widetilde{Z}$ connects two singularities of class $T$, whose corresponding continued fractions are $[c_1, \dots, c_t, \dots, c_r]$ and $[2, \dots, 2, d_1, \dots, d_s]$, as in the Figure~\ref{amplecondition}.  Therefore it suffices to show that the sum of discrepancies of the $(-c_t)$ curve and the first $(-2)$ curve (or the first $(-d_1)$ curve in case of $m=1$) is less than $-1$. 
\begin{figure}
\begin{tikzpicture}[xscale=0.6, yscale=0.5]
\begin{scope}

\draw (0,0)--(0.5,0);
\draw (1.5,0)--(6.5,0);
\draw (7.5,0)--(8,0);
\draw (4,0)--(4,-2.5);
\draw (4,-3.5)--(4,-4);
\draw (4,-4)--(4,-5.5);
\draw (4,-6.5)--(4,-7);

\node [draw, fill=white, shape=rectangle, anchor=center] at (0,0) {};
\node [draw, fill=white, shape=rectangle, anchor=center] at (2,0) {};
\node [draw, fill=white, shape=rectangle, anchor=center] at (3,0) {};
\node [draw, fill=white, shape=rectangle, anchor=center] at (4,0) {};
\node [draw, fill=white, shape=rectangle, anchor=center] at (5,0) {};
\node [draw, fill=white, shape=rectangle, anchor=center] at (6,0) {};
\node [draw, fill=white, shape=rectangle, anchor=center] at (8,0) {};

\node [draw, fill=white, shape=rectangle, anchor=center] at (4,-2) {};
\node [draw, fill=white, shape=rectangle, anchor=center] at (4,-4) {};
\node [draw, fill=white, shape=rectangle, anchor=center] at (4,-5) {};
\node [draw, fill=white, shape=rectangle, anchor=center] at (4,-7) {};

\node[bullet] at (4,-1){};

\node[above] at (0,0.1){$-c_1$};
\node[above] at (4,0.1){$-c_t$};
\node[above] at (8,0.1){$-c_r$};

%\node[below] at (1.7,0){$-b_{12}$};
%\node[below] at (3,0){$-b_{11}$};
%\node[above] at (3.9,0.1){$-(b+n)$};
%\node[below] at (5,0){$-b_{21}$};
%\node[below] at (6.3,0){$-b_{22}$};
%\node[below] at (8,0){$-b_{2r_2}$};

\node[left] at (3.8,-1){$-1$};
\node[left] at (3.8,-2){$-2$};

\node[left] at (3.8,-4){$-2$};
\node[left] at (3.8,-5){$-d_1$};
\node[left] at (3.8,-7){$-d_s$};

\node at (1,0){$\cdots$};
\node at (7,0){$\cdots$};
\node at (4,-2.8){$\vdots$};
\node at (4,-5.8){$\vdots$};

	\draw [thick,decorate,decoration={brace,amplitude=5pt},xshift=5pt,xshift=7pt]
	(4,-2) -- (4,-4) node [black,midway,xshift=20pt] 
	{$m-1$};
	\end{scope}
\begin{scope}[shift={(11.5,0)}]
\node[bullet] at (10/3,-1){};
%\node[bullet] at (9/3,-1.5){};
%\node[bullet] at (9/3-1/6,-1.75){};
\draw (0,-6)--(4,0);

\node[draw, fill=white, shape=rectangle, anchor=center] at (8/3,-2){};
\node[above] at (8/3-.2,-2+.1){\footnotesize$-2$};
\node[draw, fill=white, shape=rectangle, anchor=center] at (5/3,-3.5){};
\node[left] at (5/3-.2,-3.5){\footnotesize$-2$};
\draw[white, thick] (6.5/3+4/30+4/30,-2.35)--(6.5/3-4/30-4/30,-3.15);
\filldraw (6.5/3+4/30,-2.55) circle (0.75pt);
\filldraw (6.5/3,-2.75) circle (0.75pt);
\filldraw (6.5/3-4/30,-2.95) circle (0.75pt);
\node[draw, fill=white, shape=rectangle, anchor=center] at (3/3,-4.5){};
\draw[white, thick] (.5+4/30+4/30,-5.25+0.4)--(.5-4/30-4/30,-5.25-.4);
\filldraw (.5+4/30,-5.25+0.2) circle (0.75pt);
\filldraw (.5,-5.25) circle (0.75pt);
\filldraw (.5-4/30,-5.25-0.2) circle (0.75pt);

	\draw [thick,decorate,decoration={mirror,brace,amplitude=5pt},yshift=5pt,xshift=-10pt ]
	(8/3,-2) -- (5/3,-3.5) node [black,midway,xshift=-20pt, yshift=5pt] 
	{\footnotesize$m_1-$1};

\draw (8,-6)--(4,0);
\node[draw, fill=white, shape=rectangle, anchor=center] at (8,-6){};
\node[bullet] at (14/3,-1){};
\node[draw, fill=white, shape=rectangle, anchor=center] at (16/3,-2){};
\node[above] at (16/3+.2,-2+.1){\footnotesize$-2$};
\node[draw, fill=white, shape=rectangle, anchor=center] at (19/3,-3.5){};
\node[right] at (19/3+.2,-3.5){\footnotesize$-2$};
\draw[white, thick] (35/6-4/30-4/30,-2.35)--(35/6+4/30+4/30,-3.15);
\filldraw (35/6-4/30,-2.55) circle (0.75pt);
\filldraw (35/6,-2.75) circle (0.75pt);
\filldraw (35/6+4/30,-2.95) circle (0.75pt);
\node[draw, fill=white, shape=rectangle, anchor=center] at (7,-4.5){};
\draw[white, thick] (7.5-4/30-4/30,-5.25+0.4)--(7.5+4/30+4/30,-5.25-.4);
\filldraw (7.5-4/30,-5.25+0.2) circle (0.75pt);
\filldraw (7.5,-5.25) circle (0.75pt);
\filldraw (7.5+4/30,-5.25-0.2) circle (0.75pt);

	\draw [thick,decorate,decoration={brace,amplitude=5pt},yshift=5pt,xshift=10pt ]
	(16/3,-2) -- (19/3,-3.5) node [black,midway,xshift=25pt, yshift=5pt] 
	{\footnotesize$m_{n-2}-$1};

%\node[below] at (0,0){$-b_{1r_1}$};
%\node[below] at (1.7,0){$-b_{12}$};
%\node[below] at (3,0){$-b_{11}$};
\node[above] at (0,0.1){$-c_1$};
\node[above] at (4,0.1){$-c_t$};
\node[above] at (8,0.1){$-c_r$};

%\node[below] at (5,0){$-b_{21}$};
%\node[below] at (6.3,0){$-b_{22}$};
%\node[below] at (8,0){$-b_{2r_2}$};

\node[left] at (4,-1.5){\footnotesize$-1$};
\node[left] at (3.35,-.75){\footnotesize$-1$};
\node[right] at (4.65,-.75){\footnotesize$-1$};
\node[left] at (4,-2.25){\footnotesize$-2$};

\filldraw (5,-4.75) circle (.8pt);
\filldraw (5.25,-4.75) circle (.8pt);
\filldraw (5.5,-4.75) circle (.8pt);
\filldraw (5.75,-4.75) circle (.8pt);
\filldraw (6,-4.75) circle (.8pt);

\node[left] at (4,-4.25){\footnotesize$-2$};
%\node[left] at (4,-5){$-(b_{31}+1)$};
%\node[left] at (4,-7){$-b_{3r_3}$};

\node at (1,0){$\cdots$};
\node at (7,0){$\cdots$};
\node at (4,-2.8){$\vdots$};
\node at (4,-5.8){$\vdots$};

\draw (0,0)--(0.5,0);
\draw (1.5,0)--(6.5,0);
\draw (7.5,0)--(8,0);
\draw (4,0)--(4,-2.5);
\draw (4,-3.5)--(4,-4);
\draw (4,-4)--(4,-5.5);
\draw (4,-6.5)--(4,-7);
	\draw [thick,decorate,decoration={brace,amplitude=5pt},xshift=5pt,xshift=7pt ]
	(4,-2) -- (4,-4) node [black,midway,xshift=15pt, yshift=-5pt] 
	{\footnotesize$m_2-$1};

\node [bullet] at (4,-1) {};
\node [draw, fill=white, shape=rectangle, anchor=center] at (4,-2) {};
\node [draw, fill=white, shape=rectangle, anchor=center] at (4,-4) {};
\node [draw, fill=white, shape=rectangle, anchor=center] at (4,-5) {};
\node [draw, fill=white, shape=rectangle, anchor=center] at (4,-7) {};

\node[draw, fill=white, shape=rectangle, anchor=center] at (0,0){};

\node[draw, fill=white, shape=rectangle, anchor=center] at (2,0){};
\node[draw, fill=white, shape=rectangle, anchor=center] at (3,0){};
\node[draw, fill=white, shape=rectangle, anchor=center] at (4,0){};
\node[draw, fill=white, shape=rectangle, anchor=center] at (5,0){};
\node[draw, fill=white, shape=rectangle, anchor=center] at (6,0){};
\node[draw, fill=white, shape=rectangle, anchor=center] at (8,0){};

\node[draw, fill=white, shape=rectangle, anchor=center] at (0,-6){};

\end{scope}

\end{tikzpicture}
\caption{$(-1)$ curve connecting two singularities in $\widetilde{Z}$}
\label{amplecondition}
\end{figure}
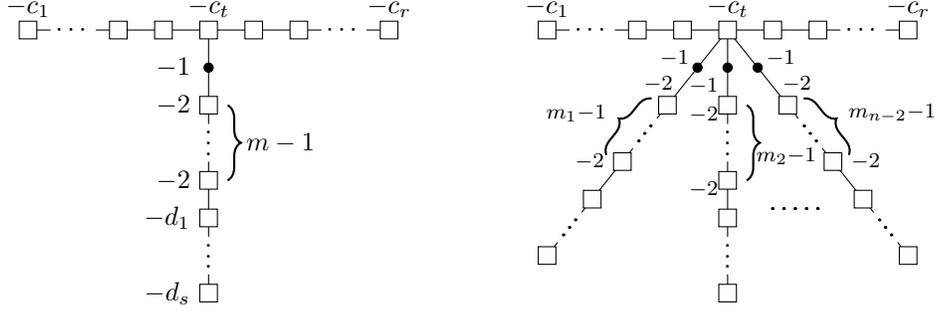

Without loss of generality, we can assume that the two $T$-singularities are actually Wahl singularities, whose corresponding continued fractions are obtained from $[4]$, because there is a unique $M$-resolution dominating a $P$-resolution of a cyclic quotient surface singularity~\cite{BC}. To show a desired inequality for the sum of discrepancies, we use an inductive description for discrepancies of Wahl singularities introduced in \cite{UV}: Let $[b_1, \dots, b_r]$ be a continued fraction corresponding to a Wahl singularity. Since the continued fraction is obtained from a single $[4]$ inductively (See, the Proposition~\ref{inductiveofsingT}), its discrepancy can also be computed inductively. 
We define a $\delta$-sequence $(\delta_1, \dots, \delta_r)$ of integers corresponding to a Wahl continued fraction $[b_1, \dots, b_r]$ inductively as follows.
\begin{enumerate}[(i)]
\item (1) corresponds to $[4]$
\item If $(\delta_1, \dots, \delta_r)$ corresponds to $[b_1,\dots, b_r]$, then 
\begin{itemize}
\item $(\delta_1, \dots, \delta_r, \delta_1 + \delta_r) \text{ corresponds to } [b_1+1, b_2, \dots, b_r, 2]$ and
\item $(\delta_1 + \delta_r, \delta_1, \dots, \delta_r) \text{ corresponds to } [2, b_1, \dots, b_{r-1}, b_r+1].$
\end{itemize}
\end{enumerate}
%If $r = 1$, that is, for $[4]$, we assign an integer $\delta_1 = 1$ to $[4]$. If we have already defined $\delta_1, \cdots, \delta_r$ for a Wahl singularity $[b_1, \cdots, b_r]$, then we assign 
%$$\delta_1, \cdots, \delta_r, \delta_1 + \delta_r \text{ to } [b_1+1, b_2, \cdots, b_r, 2],$$
%$$\delta_1 + \delta_r, \delta_1, \cdots, \delta_r \text{ to } [2, b_1, \cdots, b_{r-1}, b_r+1].$$
Then the discrepancy $m_i$ of a $(-b_i)$ curve is equal to $\left(-1 + \frac{\delta_i}{\delta_1 + \delta_r}\right)$.

First, we find a bound for the discrepancy of a $(-c_t)$ curve in the Wahl singularity corresponding to $[c_1,\dots, c_t, \dots, c_r]$ with $1<t<r$.
\begin{lemma}
Let $[c_1, \dots, c_t, \dots, c_r]$ be a continued fraction corresponding to a Wahl singularity with $c_t \geq 5$. Then the discrepancy $m_t$ of $c_t$ is less than or equal to $-1+ \frac{1}{c_t}$.
\label{D^2<-4}
\end{lemma}
\begin{proof}
Let $Z$ be a Wahl singularity corresponding to the given fraction. Then $K_{\widetilde{Z}} = \pi^*K_Z + \sum\limits_{i=1}^{r} m_i E_i$, where $E_i^2 = -c_i$. By multiplying an exceptional curve $E_t$, we obtain $-2 + c_t = m_{t-1} + m_{t+1} -m_t c_t$, so that $m_t = -1+\frac{2+m_{t-1}+m_{t+1}}{c_t}$. Consequently, it suffices to show that $m_{t-1} + m_{t+1} \leq -1$. 

First, we assume that the $(-c_t)$ curve is an initial curve of $Z$, that is, $c_t$ in $[c_1,\dots, c_t, \dots, c_r]$ comes from 5 of $[3, 5, 2]$ or $[2, 5, 3]$, whose corresponding $\delta$ sequence is $(2, 1, 3)$ or $(3, 1, 2)$, under inductive steps from $[4]$ to $[c_1,\dots, c_r]$. Let $(\delta_1, \cdots, \delta_r)$ be a $\delta$-sequence corresponding to $[c_1, \cdots, c_t, \cdots, c_r]$. Then we get $\delta_{t-1}+\delta_{t+1}=2+3=5$ and  $\delta_{1}+\delta_{r}\geq2+3=5$ from the inductive definition of $\delta$-sequence. Therefore $m_{t-1} + m_{t+1} = \left(-1 + \frac{\delta_{t-1}}{\delta_1 + \delta_r} \right) + \left(-1 + \frac{\delta_{t+1}}{\delta_1 + \delta_r} \right) = \left(-2 + \frac{\delta_{t-1} + \delta_{t+1}}{\delta_1 + \delta_r} \right) \leq -2 + \frac{5}{5} = -1$.

%Without loss of generality, assume that it is constructed from $[3, 5, 2]$. Then the $\delta$ sequence assigned to $[3, 5, 2]$ is $(2, 1, 3)$. If the sequence $(\delta_1, \cdots, \delta_r)$ is assigned to $[b_1, \cdots, b_t, \cdots, b_r]$, then $\delta_{t-1} = 2$ and $\delta_{t+1} = 3$. Also note that $\delta_1 + \delta_r \geq 2 + 3 = 5$. From the $\delta$ sequence, we obtain a bound $m_{t-1} + m_{t+1} = \left(-1 + \frac{\delta_{t-1}}{\delta_1 + \delta_r} \right) + \left(-1 + \frac{\delta_{t+1}}{\delta_1 + \delta_r} \right) = \left(-2 + \frac{\delta_{t-1} + \delta_{t+1}}{\delta_1 + \delta_r} \right) \leq -2 + \frac{5}{5} = -1$.

Secondly, we assume that the $(-c_t)$ curve is not an initial curve. Then we have the following inductive steps from $[4]$ to $[c_1,\dots, c_r]$:
$$[4] \rightarrow [ \dots, 2] \rightarrow [2,\dots, 2, \dots, c_t] \rightarrow [3,\dots,2,\dots, c_t,2] \rightarrow [c_1,\dots, c_r].$$
Then a $\delta$-sequence of the second continued fraction is $(\delta_1,\!\dots\!, \delta_{t'}, \delta_1\!+\!\delta_{t'})$, so that we have a $\delta$-sequence $((c_t\!-\!1)\delta_1 + (c_t\!-\!2)\delta_{t'}, \cdots, \delta_{t'}, \delta_1 + \delta_{t'}, c_t\delta_1+(c_t\!-\!1)\delta_{t'})$ for the fourth continued fraction.
Therefore, $m_{t-1} + m_{t+1} \leq \left(-2 + \frac{c_t\delta_1+c_t\delta_{t'}}{(2c_t-1)\delta_1 + (2c_t-3)\delta_{t'}} \right) < -1$.
\end{proof}

Next, we find bounds for the first curve of $[2, \dots, 2, d_1,\dots , d_s]$ and the $(-c_t)$ curve when $t=1$ or $r$, by using a lemma regarding discrepancies given in \cite{UV}.

\begin{lemma}[\cite{UV}, Lemma 4.4]
Let $[b_1, \dots, b_r]$ be a Wahl continued fraction, assume $r \geq 2$ and $b_r = 2$, and let us denote its discrepancies by $m_1, \dots, m_r$. Then we have the following bounds:\\
(Type M) If $b_2 = b_3 = \cdots = b_r$, then $m_1 = -1 + \frac{1}{b_1-2}$ and $m_r = -\frac{1}{b_1-2}$.\\
(Type B) Otherwise, $m_1 = -1 + \mu$ and $m_r = -\mu$, where $\frac{1}{b_1} < \mu < \frac{1}{b_1 - 1}$.
\label{lemma44inUV}
\end{lemma}

Using the lemmas above, we get that the discrepancy of a $(-c_t)$ curve is less than or equal to $-1+\frac{1}{c_t-2}$ while the discrepancy of the first curve of $[2,\dots,2,d_1,\dots, d_s]$ is less than $-\frac{1}{m+1}$. Since $b\geq 5$, we have $c_t\geq m+5$. Hence the sum of two discrepancies is less than $(-1+\frac{1}{c_t-2} -\frac{1}{m+1}) \leq (-1+\frac{1}{m+3} -\frac{1}{m+1})<-1$.

For an $n$-legged case, there are at most $(n-2)$ many $(-1)$ curves in $\widetilde{Z}$ not coming from $P$-resolutions of cyclic quotient singularities (refer to Figure~\ref{amplecondition}). Note that such a $(-1)$ curve intersects the central $(-c_t)$ curve of a Wahl singularity $[c_1, \dots, c_t, \dots, c_r]$
and the first curve of a Wahl singularity of the form $[2, 2, \dots, 2, \dots]$, where the number of consecutive $2$ is $(m_i-1)$. Since $c_t \geq n+2+\sum\limits_{i=1}^{n-2} m_i $ with $n \geq 3$, the sum of two discrepancies is less than $(-1+\frac{1}{c_t-2} -\frac{1}{m_i+1}) <-1$ for each $(-1)$ curve, which proves the ample condition.
%\begin{corollary}
%Let $[b_1, \cdots, b_r]$ be a Wahl continued fraction with $b_1 \geq 3$. Then the discrepancy of $b_1$ is less than $-\frac{1}{2}$.
%\label{E^2<-3}
%\end{corollary}
%\begin{proof}
%The last number $b_r$ must be $2$ because of the inductive construction of Wahl singularities. Then it is direct from the (Type B) in the lemma~\ref{lemma44inUV}.
%\end{proof}

%\begin{corollary}
%Let $[b_1, \cdots, b_r]$ be a Wahl continued fraction with $b_1 = b_2 = \cdots = b_n = 2$ and $b_{n+1} \geq 3$ for $1 \leq n < r$. Then the discrepancy of $b_1$ is less than $-\frac{1}{(n+2)}$.
%\label{F^2=-2}
%\end{corollary}
%\begin{proof}
%Consider the inverse of $[b_1, \cdots, b_r]$ : $[b_r, \cdots, b_1]$. Then $b_r$ must be $n + 2$ because of the inductive construction of Wahl singularities. Then it is direct from the (Type B) in lemma~\ref{lemma44inUV}.
%\end{proof}

%Returning to our discussion, if $n = 1$, than $D^2 \leq -5$ and $E^2 \leq -3$. By the lemma~\ref{D^2<-4}, $m_1 < \frac{D^2 + 1}{-D^2} < -\frac{4}{5}$. Moreover, by the corollary~\ref{E^2<-3}, $m_2 < -\frac{1}{2}$. Therefore $m_1 + m_2 < -1$.

%If $n \geq 2$, than $D^2 \leq -(4+n)$ and $F^2 = - 2$. By the lemma~\ref{D^2<-4}, $m_1 < -\frac{3 + n}{4 + n}$. And by the corollary~\ref{F^2=-2}, $m_2 < -\frac{1}{n + 1}$. Therefore $m_1 + m_2 < -\frac{n^2 + 5n + 7}{n^2 + 5n + 4} < -1$.

 \end{proof}

%\begin{lem}
%Let $W$, $W'$ and $G$ be as in Lemma~\ref{counterexamplelem1}. Assume that a corresponding symplectic line arrangemnet $S_{n,m}$ to $W'$ changes to $S_{n,m-1}$ under the surgery. Then $G\subset W'$ is obtained from a exceptional curve $e$ corresponds to a unique intersection point $p$ of $S_{n,m}$ and proper transform of $e$ is not an irreducible component of $K$ under the blowing-ups from $S_{n,m}$ to a curve configuration $C'$ corresponding to $W'$.
%\end{lem}

%\begin{proof}
%Since the corresponding symplectic line arrangement changes under the surgery, $G$ is obtained by an exceptional curve $e$ corresponding to a unique intersection point $p$ of $S_{n,m}$. Furthermore, in the proof of Proposition~\ref{prop2}, we showed that the proper transform of $e$ is the central $2$-sphere $D^0$ of $G$ under the blowing-ups from $\mathbb{CP}^2$ containing $S_{n,m}$ to $M=W'\cup K$ containing $K$ and $G$. Therefore algebraic intersection between the homology class $[e]$ and any irreducible component of $K$ is either $1$ or $0$, concluding the proof.
%\end{proof}

 %%%%%
%%%%% Bibliography
%%%%%

\bigskip

\providecommand{\bysame}{\leavevmode\hbox to3em{\hrulefill}\thinspace}

\end{document}